\definecolor{smoked}{RGB}{216, 212, 204}
\definecolor{mauve}{RGB}{200, 55, 171}
\definecolor{apricot}{RGB}{250, 144, 4}
\definecolor{sky}{RGB}{66, 169, 244}
\definecolor{plum}{RGB}{76, 0, 102}
\definecolor{lightmauve}{RGB}{232, 173, 220}
\definecolor{lightapricot}{RGB}{253, 211, 155}
\definecolor{lightsky}{RGB}{178, 221, 251}
\definecolor{lightplum}{RGB}{184, 153, 192}
\definecolor{darksmoked}{RGB}{198, 194, 176}
\tikzset{>=latex}
\theoremstyle{plain}
\newtheorem{thm}{Theorem}[section]
\newtheorem{lem}[thm]{Lemma}
\newtheorem{prop}[thm]{Proposition}
\newtheorem{thmx}{Theorem}
\newtheorem{corx}[thmx]{Corollary}
\newtheorem{cor}[thm]{Corollary}
\newtheorem{question}{Question}
\theoremstyle{remark}
\newtheorem{rmk}[thm]{Remark}
\theoremstyle{definition}
\newtheorem{defn}[thm]{Definition}
\newtheorem{ex}[thm]{Example}
\numberwithin{equation}{section}
\DeclareMathOperator{\R}{\mathbb{R}}
\DeclareMathOperator{\C}{\mathbb{C}}
\DeclareMathOperator{\Z}{\mathbb{Z}}
\DeclareMathOperator{\HH}{\mathbb{H}}
\newcommand{\PSO}{\operatorname{PSO}}
\newcommand{\PSL}{\operatorname{PSL}}
\newcommand{\PU}{\operatorname{PSU}}
\newcommand{\psl}{\PSL_2 \R}
\newcommand{\surface}{\Sigma}
\newcommand{\Rep}{\operatorname{Rep}}
\newcommand{\Hom}{\operatorname{Hom}}
\title[Totally elliptic surface group representations]{Totally elliptic surface group representations}
\author[Arnaud Maret]{Arnaud Maret} 
\address{Université de Strasbourg, IRMA, 7 rue Descartes, 67000 Strasbourg,
France}
\email{maret.arnaud@unistra.fr}
\begin{document}

\begin{abstract}
A surface group representation into a Lie group is called totally elliptic if every simple closed curve on the surface is mapped to an elliptic element of the target group. In this note, we characterize all totally elliptic surface group representations into $\psl$ and $\PSL_2\C$ by showing that they are either representations into a compact subgroup or Deroin--Tholozan representations. 
\end{abstract}

\maketitle

\section{Introduction}
\subsection{Main results}
Surface group representations are homomorphisms from the fundamental group of a surface $\surface$ into a Lie group $G$. The topological quotient of the space of all homomorphisms $\pi_1\surface \to G$ by the conjugation action of $G$ will be called the \emph{character variety} of the pair $(\surface, G)$ (even though the quotient may not be a Hausdorff topological space) and denoted by
\[
\Rep (\surface, G)= \Hom(\pi_1\surface, G)/G.
\] 
A character variety is not necessarily a connected space; dynamical behaviour and qualitative properties of representations may vary considerably between components. At one end of the spectrum are components of discrete and faithful representations (so-called \emph{higher Teichmüller spaces}~\cite{wienhard}).  This article aims at drawing the reader's attention towards the other end of the spectrum where connected components are mostly made of non-discrete representations.

We will focus on specific prototype of surface group representations---known as totally elliptic representations---that are antipodal to discrete and faithful ones.
\begin{defn}\label{def:totally-elliptic}
A surface group representation $\pi_1\surface\to G$ is \emph{totally elliptic} if every simple closed curve on $\surface$ is mapped to an elliptic element of $G$. By an \emph{elliptic} element of $G$, we mean an element of a compact subgroup of $G$ (sometimes called a \emph{compact} element of $G$).
\end{defn}
The surface $\surface$ will always be assumed to be oriented and connected, and may be punctured. A \emph{simple closed curve} on $\surface$ is the free isotopy class of a closed loop on $\surface$ that does not self-intersect and is homotopically non-trivial. A simple closed curve can be homotopic to a puncture, in which case we call it a \emph{peripheral} curve. By definition of a totally elliptic representation, the images of all peripheral curves are elliptic. We want to avoid the situation where peripheral curves are mapped to the identity element in $G$ and we say that a representation is \emph{reduced} when this is not the case (Definition~\ref{def:reduced-representation}).

The first examples of totally elliptic representations, though not the most striking, are those whose image lies in a compact subgroup of $G$. Interestingly, however, by requiring only \emph{simple} closed curves to be mapped to elliptic elements (as opposed to all curves), one can construct totally elliptic representations with dense image into non-compact Lie groups and whose deformations remain totally elliptic. This turns out to be possible whenever $G$ is a Hermitian Lie group for instance (Section~\ref{sec:perspectives}).

This paper focuses on the case where $G$ is $\psl$ or $\PSL_2\C$. The first totally elliptic representations with (Zariski) dense image in $\psl$ were discovered by Benedetto--Goldman~\cite{benedetto-goldman} for 4-punctured spheres, and later generalized by Deroin--Tholozan~\cite{deroin-tholozan} for spheres with arbitrarily many punctures. Those representations were originally named \emph{supra-maximal} in~\cite{deroin-tholozan}, but we shall call them as \emph{DT representations} (Definition~\ref{def:DT-representation}), to avoid any confusion with maximal representations from higher Teichmüller theory. Deroin--Tholozan proved that deforming conjugacy classes of DT representations, while preserving the \emph{rotation angle} (Section~\ref{sec:positive-genus}) of the peripheral monodromies, leads to compact components inside the corresponding \emph{relative character varieties} (Section~\ref{sec:genus-zero}) which are made exclusively of totally elliptic representations (Theorem~\ref{thm:DT}). The question was raised whether DT representations are the only totally elliptic representations into $\psl$, besides representations into compact subgroups which we call \emph{orthogonal representations}. We bring a positive answer to that question.

\begin{thmx}\label{thm-intro}
Consider a surface $\surface$ of genus $g \geq 0$ with $n \geq 0$ punctures, and let $\rho\colon \pi_1\surface \to \psl$ be a reduced totally elliptic representation. If $\rho$ is not orthogonal, then $\Sigma$ must be a sphere with at least three punctures ($g = 0$, $n \geq 3$), and $\rho$ is necessarily a DT representation.
\end{thmx}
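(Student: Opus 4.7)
My plan is to split into three topological regimes, using as the central technical input that two elliptic elements of $\PSL_2\R$ with distinct fixed points in $\HH^2$ generically admit hyperbolic word-products. First, the easy cases: if $g = 0$ and $n \leq 2$, then $\pi_1\Sigma$ is trivial or infinite cyclic, so reducedness plus total ellipticity force the image into a compact subgroup, making $\rho$ orthogonal.

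Next, I would handle the positive genus case $g \geq 1$. Choose a non-separating simple closed curve $a$ and a simple closed curve $b$ with geometric intersection number one. Since $a^p b^q$ is a simple closed curve on a regular neighborhood of $a \cup b$ for every coprime pair $(p,q)$, the elements $\rho(a)^p\rho(b)^q$ must all be elliptic. The main lemma I would prove here is: for two elliptic $A, B \in \PSL_2\R$ with distinct fixed points, some coprime $(p,q)$ makes $A^p B^q$ hyperbolic. This follows from an explicit formula for the trace of $A^p B^q$ in terms of the rotation angles of $A, B$ and the hyperbolic distance between their fixed points, combined with density of angle iterates in the irrational case and a triangle-group picture in the rational case. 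Hence $\rho(a)$ and $\rho(b)$ share a fixed point $x \in \HH^2$. I would then propagate this conclusion across the whole surface by chaining generators through pairwise once-intersecting simple closed curves, eventually producing a common fixed point for all of $\rho(\pi_1\Sigma)$.

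Finally, the main case $g = 0$ with $n \geq 3$, handled by induction on $n$. The base case $n = 3$ amounts to classifying triples of elliptic elements whose product is the identity: either the three rotations share a fixed point (orthogonal) or they assemble into a unique rigid configuration that one identifies directly with the generating DT representation. For $n \geq 4$, I would pick a non-peripheral simple closed curve $\gamma$ separating $\Sigma$ into two spheres $\Sigma_1, \Sigma_2$ with fewer punctures. The restricted representations $\rho_1, \rho_2$ remain totally elliptic, so by induction each is orthogonal or DT. I would then combine them: if both are orthogonal, $\rho(\gamma)$, when nontrivial, lies in two maximal compacts and forces them to coincide; otherwise I would verify that the peripheral rotation numbers of $\rho$ on $\Sigma$ satisfy the supra-maximality condition characterizing DT components, using the analogous condition at $\gamma$ for $\rho_1$ and $\rho_2$.

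The main obstacle is this last assembly step. A priori, gluing totally elliptic pieces could produce exotic totally elliptic representations lying in neither the orthogonal nor DT locus. Overcoming this requires an intrinsic, additive characterization of DT representations in terms of peripheral rotation angles (essentially the defining supra-maximality condition from Deroin--Tholozan), together with checking that this condition behaves correctly under the splitting along $\gamma$. This is where I expect the bulk of the technical work to lie; the positive genus case and the base case $n=3$ are comparatively direct given the key trace lemma.
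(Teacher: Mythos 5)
Your positive-genus argument rests on a false lemma. It is not true that for every pair of elliptic $A, B \in \psl$ with distinct fixed points some coprime $(p,q)$ makes $A^p B^q$ hyperbolic. Take $A$ a rotation of order $3$ at $P$ and $B$ a rotation of order $5$ at $Q$, with $d(P,Q)$ small but positive. The words $A^p B^q$ depend only on $(p\bmod 3, q\bmod 5)$, and one checks (using $\tfrac12\mathrm{tr}(A^p B^q) = \cos(p\pi/3)\cos(q\pi/5) - \sin(p\pi/3)\sin(q\pi/5)\cosh d$) that at $d=0$ every nontrivial residue pair gives $|\mathrm{tr}|/2 < 1$; by continuity, all $A^pB^q$ remain elliptic for $d$ small, yet $A, B$ do not share a fixed point. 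Moreover, the premise itself is off: on a one-holed torus the element $a^p b^q$ is not, for $p,q\geq 2$, a representative of the $(p,q)$ simple closed curve (the correct representatives are Christoffel words; e.g.\ $a^3b^2$ is not simple, $a^2bab$ is). The paper bypasses both problems by using a single word that you did not consider: the commutator $[a,b]$, which represents the boundary of a regular neighbourhood of $a\cup b$ and is therefore simple on any $\surface$ other than the closed torus (Lemma~\ref{lem:commutator-scc-is-scc}). Lemma~\ref{lem:commutator-hyperbolic} shows $[\rho(a),\rho(b)]$ is hyperbolic or trivial, and trivial forces a common fixed point. This is robust against the rational-angle counterexample above, and a mild bookkeeping argument then propagates commutativity across all generators (Propositions~\ref{prop:orthogonal-close-surfaces} and~\ref{prop:orthogonal-punctured-surfaces}).

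For genus zero your induction sketch is plausible in spirit but substantially underestimates the difficulty, and it is not how the paper proceeds. Two points are missing. First, you do not treat the case where the separating curve $\gamma$ lies in the kernel of $\rho$: the restrictions are then not reduced and the inductive hypothesis doesn't apply; the paper handles this separately with a Dehn-twist trick (Lemmas~\ref{lem:scc-in-kernel-n=4},~\ref{lem:scc-in-kernel}). Second, for $n=4$ the supra-maximality/additivity argument you sketch only works when $|\alpha| < 2\pi$ or $|\alpha| > 6\pi$ (this is Lemma~\ref{lem:4-punctured-small-big-angles}); in the intermediate range the paper has to invoke genuinely dynamical input --- boundedness of totally elliptic loci (Proposition~\ref{prop:totally-elliptic-bounded}), Lisovyy--Tykhyy's classification of finite orbits, and Cantat--Loray's classification of infinite bounded orbits (Corollary~\ref{cor:cantat-loray}). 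Palesi's example in~\cite{palesi} shows why some such machinery is needed: for $|\alpha| \in [2\pi, 6\pi]$ there exist non--totally-elliptic representations mapping some non-peripheral simple closed curve to an elliptic. For $n\geq 5$, the paper's induction is not a separating-curve argument but uses restrictions to overlapping $4$-punctured subspheres and the triangle-chain orientation criterion from~\cite{action-angle} (Proposition~\ref{prop:DT-if-triangle-coherently-oriented}); your separating-curve splitting would also need to control what happens when one side is orthogonal and the other is DT, which is exactly the kind of case the triangle-chain coherence argument is designed to handle.
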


We prove Theorem~\ref{thm-intro} in Section~\ref{sec:proof-of-main-theorem}. The proof can be sketched as follows. First, we notice that surfaces with genus at least $1$ and different than the closed torus admit simple closed curves that can be represented by the commutator of two fundamental group elements, each representing a simple closed curve (Lemma~\ref{lem:commutator-scc-is-scc}). However, the commutator of two elliptic elements of $\psl$ is elliptic if and only if it is trivial (Lemma~\ref{lem:commutator-hyperbolic}). These two observations will obstruct the existence of non-orthogonal totally elliptic representations when $g\geq 1$. The more challenging case is when $g=0$. When $\surface$ is a punctured sphere, we will prove that totally elliptic and non-orthogonal representations are DT representations by inducting on the number $n$ of punctures on $\surface$. The base case $n=4$ is a consequence of a combination of results by Deroin--Tholozan (Proposition~\ref{prop:totally-elliptic-bounded}) and Cantat--Loray (Corollary~\ref{cor:cantat-loray}). For the induction step, we will require the notion of \emph{triangle chains} developed in~\cite{action-angle} and the associated characterization of DT representations (Proposition~\ref{prop:DT-if-triangle-coherently-oriented}). 

\subsection{Applications and generalizations}
An immediate consequence of Theorem~\ref{thm-intro} is that DT representations can be characterized inside their relative character variety by the property of being totally elliptic.

\begin{corx}\label{cor-intro-characterization-DT}
If $\surface$ is a sphere with $n\geq 3$ punctures and $\rho\colon\pi_1\surface\to \psl$ maps every peripheral curve on $\surface$ to a non-trivial elliptic element of $\psl$, then
\[
\rho\text{ is a DT representation}\quad\Longleftrightarrow\quad \rho\text{ is totally elliptic and non-orthogonal.}
\]
\end{corx}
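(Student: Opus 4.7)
The plan is to establish the two implications separately, noting that the corollary is essentially a repackaging of Theorem~\ref{thm-intro} together with known basic properties of DT representations.

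For the reverse implication ($\Leftarrow$), I would argue that the hypothesis is tailored to fit directly into Theorem~\ref{thm-intro}. Since every peripheral curve is sent to a \emph{non-trivial} elliptic element, no peripheral monodromy is the identity, so $\rho$ is reduced in the sense of Definition~\ref{def:reduced-representation}. Given that $\Sigma$ is already a sphere with $n\geq 3$ punctures, and $\rho$ is by assumption totally elliptic and non-orthogonal, Theorem~\ref{thm-intro} immediately concludes that $\rho$ must be a DT representation. There is no extra work here beyond checking that ``reduced'' follows from the peripheral non-triviality condition.

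For the forward implication ($\Rightarrow$), I would appeal to the defining features of DT representations recalled in the introduction. By Definition~\ref{def:DT-representation} and Theorem~\ref{thm:DT} (Deroin--Tholozan), DT representations lie in compact components of the relative character variety consisting exclusively of totally elliptic representations, so any DT representation is totally elliptic. Moreover, one of the original motivations for studying DT representations in~\cite{benedetto-goldman,deroin-tholozan} was precisely that they are Zariski dense in $\PSL_2\R$; in particular their image is not contained in any compact subgroup, so they are non-orthogonal.

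The main obstacle is essentially cosmetic: I need to make sure that the notion of DT representation used in Definition~\ref{def:DT-representation} automatically encompasses both total ellipticity and non-orthogonality, and that the reducedness hypothesis of Theorem~\ref{thm-intro} is visibly implied by the non-triviality assumption on peripheral images. Once these bookkeeping points are made explicit, the corollary is a one-line deduction from Theorem~\ref{thm-intro} in one direction and from the basic structural properties of DT representations in the other.
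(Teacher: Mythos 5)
Your proposal is correct and follows essentially the same route as the paper: the reverse implication is a direct application of Theorem~\ref{thm-intro} once you observe (as you do) that non-trivial peripheral monodromy means $\rho$ is reduced, and the forward implication is exactly the content of Theorem~\ref{thm:DT} together with Definition~\ref{def:DT-representation} — DT representations are by construction totally elliptic, and Zariski density rules out orthogonality — which is what the paper records by citing \cite[Lemma~3.2]{deroin-tholozan}.
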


The direct implication in Corollary~\ref{cor-intro-characterization-DT} was established in~\cite[Lemma~3.2]{deroin-tholozan} (see also~\cite[Proposition~2.7]{maret-ergodicity}) and the return implication follows immediately from Theorem~\ref{thm-intro}. The statement of Corollary~\ref{cor-intro-characterization-DT} can be refined depending on the peripheral data of $\rho$. The angle of rotation of the image under $\rho$ of a peripheral curve on $\surface$ is a called a \emph{peripheral angle} (Section~\ref{sec:genus-zero}). A peripheral angle is always a number in $(0,2\pi)$. When the sum of all peripheral angles of $\rho$ (one for each puncture of $\surface$) is not an integer multiple of $2\pi$, then $\rho$ cannot be orthogonal (see Corollary~\ref{cor:mondello} for more details). In that case, the statement of Corollary~\ref{cor-intro-characterization-DT} can be refined to:
\begin{quote}
\textit{
If the sum of the peripheral angles of $\rho$ is not an integer multiple of $2\pi$, then
\[
\rho\text{ is a DT representation}\quad\Longleftrightarrow\quad \rho\text{ is totally elliptic.}
\]
}
\end{quote} 

In Section~\ref{sec:PSL(2,C)}, we will generalize the statement of Theorem~\ref{thm-intro} to surface group representations into $\PSL_2\C$---the complexification of $\PSL_2\R$. Maximal compact subgroups of $\PSL_2\C$ are conjugate to the projective unitary group $\PU(2)$ and surface group representations into a compact subgroup of $\PSL_2\C$ are accordingly called \emph{unitary representations}. In order to generalize Theorem~\ref{thm-intro} from $\psl$ to $\PSL_2\C$, we need to distinguish between irreducible and reducible representations. Recall that a representation $\rho\colon\pi_1\surface\to\PSL_2\C$ is \emph{reducible} if its image is conjugate to a subgroup made of upper triangular elements of $\PSL_2\C$; it is \emph{irreducible} otherwise. Despite the unfortunate linguistic resemblance, the notion of reducible representations and that of reduced representations (Definition~\ref{def:reduced-representation}) are different.
\begin{thmx}[Propositions~\ref{prop-irreducible} \&~\ref{prop-reducible}]\label{thm-intro-complex}
Consider a surface $\surface$ of genus $g\geq 0$ and with $n\geq 0$ punctures, and let $\rho\colon\pi_1\surface\to\PSL_2\C$ be a reduced totally elliptic representation.
\begin{itemize}
\item If $\rho$ is irreducible, then either $\rho$ is unitary or $\surface$ must be sphere with at least three punctures ($g=0$, $n\geq 3$), and $\rho$ is conjugate to the composition of a DT representation with the inclusion $\PSL_2\R\subset \PSL_2\C$.
\item If $\rho$ is reducible and $g\geq 1$, then the image of $\rho$ is conjugate to a subgroup made of diagonal elements in $\PU(2)$.
\end{itemize}
\end{thmx}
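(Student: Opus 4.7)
The plan is to prove Theorem~\ref{thm-intro-complex} as two separate propositions, reducing each to a classical fact about $\PSL_2\C$ and, when possible, to Theorem~\ref{thm-intro}.

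\textbf{Irreducible case.} I aim to show that an irreducible totally elliptic $\rho$ is conjugate within $\PSL_2\C$ to a representation into either $\PSL_2\R$ or $\PU(2)$, and then to invoke Theorem~\ref{thm-intro}. An element of $\PSL_2\C$ is elliptic exactly when its squared trace lies in $[0,4]\subset\R$, so total ellipticity makes $\operatorname{tr}^2(\rho(\gamma))\in\R$ for every simple closed curve $\gamma$. By the classical trace-generation result in the tradition of Horowitz, Procesi, and Goldman, for a surface with $\chi<0$ the squared-trace functions of simple closed curves generate the coordinate ring of the $\PSL_2\C$-character variety of $\pi_1\Sigma$; reality on simple closed curves therefore propagates to the whole character. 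For irreducible $\rho$ such a real character is realized by a representation into one of the two real forms $\PSL_2\R$ or $\PU(2)$. If $\rho$ is $\PU(2)$-conjugate, it is unitary; otherwise Theorem~\ref{thm-intro} applies, and since an orthogonal $\PSL_2\R$-representation lies in $\operatorname{PO}(2)\subset\PU(2)$ (contradicting the non-$\PU(2)$ alternative), $\rho$ must be a DT representation on a punctured sphere with $n\geq 3$.

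\textbf{Reducible case.} After conjugation the image sits in the Borel subgroup $B\subset\PSL_2\C$ of upper-triangular elements, whose unipotent radical $N$ consists of parabolic elements. Total ellipticity rules out $\rho(\gamma)\in N\setminus\{\id\}$ for any simple closed curve $\gamma$, so every non-trivial $\rho(\gamma)$ is elliptic with unit-modulus diagonal entries and a second fixed point $z_\gamma$ on $\partial\HH^3\setminus\{\infty\}$. For the closed torus, abelianity of $\pi_1\Sigma$ together with the fact that two commuting elliptic upper-triangulars share their second fixed point places the image in the diagonal torus of $\PU(2)$ after a single translation. For $\Sigma$ with $g\geq 1$ not the closed torus, Lemma~\ref{lem:commutator-scc-is-scc} yields simple closed curves $\alpha,\beta$ whose commutator is itself a simple closed curve; since commutators in $B$ land in $N$ and the only elliptic element of $N$ is trivial, we get $\rho([\alpha,\beta])=\id$ and hence $z_\alpha=z_\beta$. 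I then iterate this reasoning along intersecting-once pairs of simple closed curves, using bridging curves such as products $a_ib_j$ or $a_ic_j$ to link standard generators across distinct handles and punctures, propagating the coincidence of second fixed points to every simple closed curve; a single translation then conjugates the image into the diagonal torus of $\PU(2)$.

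The hardest step, in the irreducible case, will be the real-character realization theorem for $\PSL_2\C$ representations in the punctured setting, which requires a careful setup of trace generation near reducible characters. In the reducible case the subtler topological point is propagating $z_\gamma$ through peripheral generators: although a peripheral curve $c_j$ is never itself part of any intersecting-once pair, products like $a_i c_j$ are simple closed curves whose commutators with suitable interior generators detect any nonzero off-diagonal entry of $\rho(c_j)$ as an elliptic-forbidden parabolic element.
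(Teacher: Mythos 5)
Your proposal follows the paper's architecture closely in both halves, but there is a real gap in the reducible half.

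\textbf{Irreducible case.} Your argument is essentially the one in the paper: total ellipticity forces the (squared) traces of simple closed curves to be real; the Horowitz--Procesi--Goldman trace-generation result then makes the whole character real; an irreducible real character lands the image in a real form, i.e.\ in $\PU(2)$ or $\psl$; and in the $\psl$ case one invokes Theorem~\ref{thm-intro}. You are slightly more careful than the paper at one point, namely in noting that if the $\psl$-representation produced is orthogonal, it lies in $\PSO(2)\subset\PU(2)$ and hence falls under the ``unitary'' alternative rather than the ``DT'' alternative; the paper's proof of Proposition~\ref{prop-irreducible} glosses over this, so your remark is a welcome clarification. Your worry about ``careful setup near reducible characters'' is misplaced: the trace-generation statement is unconditional, and the realization-in-a-real-form step already hypothesizes irreducibility, so nothing delicate happens near the reducible locus.

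\textbf{Reducible case.} The key computation --- that in the Borel subgroup the commutator of two elements is unipotent, so that total ellipticity forces commutators of simple closed curves to be trivial (formula~\eqref{commutator} in the paper) --- matches the paper exactly, and you correctly identify this as the Borel analogue of Lemma~\ref{lem:commutator-hyperbolic}. The closed-torus case is also fine. The gap is in the propagation step for $g\geq 1$, $n\geq 0$ with $\Sigma$ not the torus: you propose ``bridging curves such as products $a_ib_j$ or $a_ic_j$'' to link generators across handles and punctures. These words are not obviously simple closed curves. In the standard picture, $a_i$ and $b_j$ (for $j\neq i$), and likewise $a_i$ and $c_j$, are \emph{disjoint} loops based at the same point, and the concatenation of two disjoint simple loops at a common basepoint is generically a figure-eight, not a simple closed curve; whether its free homotopy class admits a simple representative depends on the cyclic order of the four strands at the basepoint, which you do not check. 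This is precisely the pitfall the paper's Lemmas~\ref{lem:list-of-scc-for-genus} and~\ref{lem:list-of-scc-for-genus-and-punctures} are built to avoid: their curves, such as $a_i^{-1}[a_i,b_i]c_j^{-1}a_ic_j$ and $[b_i^{-1}b_j^{-1},a_i]$, are cooked up using conjugations so that simplicity can be verified by an explicit picture, and they are chosen so that --- \emph{after} using the already-established commutativity relations --- they simplify to the commutator whose triviality one wants. The paper's proof of Proposition~\ref{prop-reducible} simply reruns Propositions~\ref{prop:orthogonal-close-surfaces} and~\ref{prop:orthogonal-punctured-surfaces} verbatim with Lemma~\ref{lem:commutator-hyperbolic} replaced by formula~\eqref{commutator}; you should do the same rather than inventing new bridging curves, or else supply a proof that your specific bridging words represent simple closed curves (and that their commutators with the intended companions do too).

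One final small point: once you know the image is abelian and upper triangular, you should also note that it contains a non-trivial elliptic (hence semisimple) element --- supplied by a non-trivial $\rho(c_i)$ if $n\geq 1$, or by a non-trivial $\rho(a_i)$ or $\rho(b_i)$ otherwise, unless $\rho$ is trivial --- so that the shared fixed-point pair is genuine and simultaneous diagonalization into the torus of $\PU(2)$ actually goes through.
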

Theorem~\ref{thm-intro-complex} leaves open the case of reducible totally elliptic representations into $\mathrm{PSL}_2\C$ for genus-0 surfaces. As it turns out, such representations are not necessarily unitary.
\begin{thmx}[Example~\ref{example-reducible}]\label{thm-intro-complex-reducible}
If $\surface$ is a sphere with at least three punctures ($g=0$, $n\geq3$), then there exist reduced totally elliptic representations $\pi_1\surface\to\mathrm{PSL}_2\C$ that are reducible but not unitary.
\end{thmx}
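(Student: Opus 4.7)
The plan is to build a reduced totally elliptic representation $\rho\maps\pi_1\Sigma\to\PSL_2\C$ whose image lies in the upper-triangular Borel subgroup $B\subset\PSL_2\C$ but in no conjugate of $\PU(2)$. With peripheral generators $\gamma_1,\ldots,\gamma_n$ of $\pi_1\Sigma$ satisfying $\gamma_1\cdots\gamma_n=1$, I set $\rho(\gamma_i)=A_i:=\begin{pmatrix}a_i & b_i\\ 0 & a_i^{-1}\end{pmatrix}$ and tune the parameters $(a_i,b_i)$.

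The first step is to choose $a_i\in S^1\setminus\{\pm 1\}$ with $\prod_i a_i=\pm 1$ and, when $n\geq 4$, such that every partial product $\prod_{i\in A}a_i$ with $2\leq|A|\leq n-2$ avoids $\pm 1$. This is a finite list of codimension-one conditions on the $(n-1)$-real-dimensional locus $\{\prod a_i=\pm 1\}\subset(S^1)^n$ and is met generically (vacuously when $n=3$). Given such $(a_i)$, the relation $A_1\cdots A_n=\pm I$ in $\operatorname{SL}_2\C$ reduces to a single complex linear equation in the $b_i$, whose solution set is a complex hyperplane of dimension $n-1\geq 2$.

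The unitary representations inside this hyperplane form the one-complex-parameter family $b_i=p(a_i^{-1}-a_i)$, $p\in\C$, namely the $B$-conjugates of the diagonal representation $b_i\equiv 0$, characterised by the condition that the second fixed points $p_i:=b_i/(a_i^{-1}-a_i)\in\C$ all agree. Since $n-1>1$, I would pick $(b_i)$ in the hyperplane but outside that family, and declare $\rho$ to be the resulting homomorphism.

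To conclude, I would verify the required properties. Reducibility and reducedness hold by construction, since $a_i\neq\pm 1$ makes each peripheral image a nontrivial elliptic element. For a non-peripheral simple closed curve $c$ bounding a partition $A\,|\,A^c$ with $|A|,|A^c|\geq 2$, commutativity of the diagonal of $B$ makes the top-left entry of $\rho(c)$ depend only on the homology class of $c$ in $H_1(\Sigma;\Z)$, so it equals $\prod_{i\in A}a_i\in S^1\setminus\{\pm 1\}$ by the first-step choice, and $\rho(c)$ is elliptic. Non-unitarity is immediate: the image already fixes $\infty\in\partial\HH^3$, so an additional fixed point in $\HH^3$ would have to lie on every vertical axis above the $p_i\in\C$, which is impossible once the $p_i$'s disagree. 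The only part that requires genuine care is the ellipticity verification for non-peripheral curves, which rests on the homological observation about $\rho(c)$; everything else reduces to linear algebra and fixed-point geometry in $\HH^3$.
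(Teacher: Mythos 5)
Your proof is correct and follows essentially the same route as the paper's Example~\ref{example-reducible}: an upper-triangular representation whose diagonal part sends every simple closed curve into $S^1\setminus\{\pm1\}$, together with a generic choice of off-diagonal data that obstructs a common fixed point in $\HH^3$. The one place you diverge is in justifying the key input: you observe directly that a simple closed curve on a punctured sphere separates the punctures into a partition $A\mid A^c$ and that its homology class is $\pm\sum_{i\in A}[c_i]$, whereas the paper packages this as Lemma~\ref{lem:scc-spheres}, proved via the $\mathrm{Aut}^\ast(\pi_1\surface)$ action and its identification with the pure mapping class group. The two statements are equivalent, and your topological argument is arguably more elementary and self-contained. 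Your fixed-point characterization of the unitary locus inside the hyperplane (the one-parameter family $b_i=p(a_i^{-1}-a_i)$) is likewise an explicit reformulation of the paper's observation that a reducible unitary representation must be abelian, which is verified there via the commutator identity~\eqref{commutator}; both correctly reduce non-unitarity to a codimension-positive condition on the cocycle.
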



\subsection{Motivations}
Characterizing a certain type of surface group representations by the images of \emph{simple} closed curves, as we did in Corollary~\ref{cor-intro-characterization-DT}, may seem baroque at first. Similar problems, however, have sparked substantial interest in recent decades. Bowditch famously raised the question of whether discrete and faithful representations of a closed surface group into $\psl$---also known as \emph{Fuchsian representations}---can be characterized by the property of mapping every simple closed curve to a hyperbolic element of $\psl$ (an element whose trace is larger than two in absolute value). In analogy to Definition~\ref{def:totally-elliptic}, such representations will be called \emph{totally hyperbolic}.
\begin{question}[{\cite[Question~C]{bowditch}}]\label{question-Bowditch}
Let $\Sigma$ denote a closed surface of genus at least two. Is the following equivalence true for a representation $\rho\colon\pi_1\surface\to\psl$:
\[
\rho\text{ is a Fuchsian representation}\quad\Longleftrightarrow\quad \rho\text{ is totally hyperbolic}\quad ?
\]
\end{question}

Bowditch's question (Question~\ref{question-Bowditch}) has been answered positively for surfaces of genus two by Marché--Wolff~\cite{marche-wolff, marche-wolff-2} and remains open in higher genera. A positive answer to Bowditch's question  would be a finer characterization of Fuchsian representations which are already known to be exactly those representations mapping every non-trivial element of $\pi_1\surface$ to a hyperbolic element of $\psl$. A surface group representation into $\PSL_2\R$ whose image consists of hyperbolic elements and the identity is sometimes called \emph{purely hyperbolic}, so that Fuchsian representations corresponds to faithful purely hyperbolic representations.

Of course, not every representation is Fuchsian. Goldman proved that the character variety of representations $\pi_1\surface\to\psl$, where $\surface$ is a closed surface of genus $g\geq 2$, has $4g-3$ connected components~\cite{goldman-components}. The components are distinguished by an integer-valued invariant called \emph{Euler number}. The two components with extremal Euler number are made of conjugacy classes of Fuchsian representations and form two copies of the Teichmüller space of~$\surface$; the other $4g-5$ components are known as \emph{intermediate components} and they do not contain any Fuchsian representation. Bowditch's question (Question~\ref{question-Bowditch}) is equivalent to asking whether every representation whose conjugacy class lies in an intermediate component maps at least one simple closed curve on $\surface$ to a non-hyperbolic element of $\psl$. Marché--Wolff also proved that a positive answer to Bowditch's question would imply that the mapping class group action on every intermediate component is ergodic~\cite[Theorem~1.6]{marche-wolff}. The latter statement is known as Goldman's ergodicity conjecture~\cite[Conjecture~3.1]{goldman-conjecture} and is open, like Bowditch's question, for surfaces of genus at least three.

Goldman also conjectured that almost every non-Fuchsian closed surface group representation $\pi_1\surface\to\psl$ should be the holonomy of a \emph{branched hyperbolic structure} on $\surface$ which is the data of a hyperbolic metric of $\surface$ with several conical singularities of angle $2\pi k$ with $k\geq 2$. To be precise, according to Goldman, every dense representation whose conjugacy class lies in an intermediate component with non-zero Euler number should be the holonomy of a branched hyperbolic structure~\cite[Conjecture~3.9]{goldman-conjecture}. Mathews proved some partial implication from a positive answer to Bowditch's question to Goldman's geometrization conjecture for the intermediate component of Euler number $2g-3$~\cite{mathews} and Faraco covered the case of purely hyperbolic representations~\cite{faraco}.

Bowditch's question (Question~\ref{question-Bowditch}) and Goldman's conjectures on intermediate components are intertwined. In addition to Marché--Wolff's results, we point out that Goldman's ergodicity conjecture implies a ``full measure version'' of the geometrization conjecture, as well as a positive answer to Bowditch's question on a full measure subset of intermediate components.
\bigskip
\begin{center}
\begin{tikzpicture}
\matrix (m) [matrix of math nodes, row sep=3em, column sep=3em, text
            height=1.5ex, text depth=0.25ex] 
            {  \parbox{2cm}{Bowditch's question} &  & \parbox{2cm}{Geometrization conjecture}  \\
               & \parbox{2cm}{Ergodicity conjecture} &  \\};
            \path[->] (m-1-1) edge [double, shorten <= 9pt, shorten >= 9pt, bend right=90] node[below left]{{\small \cite{marche-wolff}}} (m-2-2);
            \path[->] (m-2-2) edge [double, dashed, shorten <= 9pt, shorten >= 3pt, bend right=30]  (m-1-1);
             \path[->] (m-2-2) edge [double, dashed, shorten <= 9pt, shorten >= 9pt, bend right=60]  (m-1-3);
\end{tikzpicture}
\end{center}
Goldman's two conjectures and Bowditch's question, as well as the interplay among them, constitute a whole program to study intermediate components which we refer to as the \emph{Bowditch--Goldman program}.

In this context, totally elliptic representations—particularly DT representations—align naturally with the broader philosophy of the Bowditch–Goldman program, even though, strictly speaking, the program concerns closed surfaces. By definition, they map every simple closed curve to an elliptic element of $\psl$, and therefore satisfy the analogue of Bowditch's condition on intermediate representations in a strong sense. The ergodicity of the pure mapping class group action on the character variety of DT representations has been established in~\cite{maret-ergodicity}. As explained in~\cite[Section~4]{deroin-tholozan} and further detailed in~\cite{aaron-arnaud}, DT representations are also holonomies of hyperbolic cone structures with branch point singularities, thereby satisfying a precise version of the geometrization conjecture.

The study of totally elliptic surface group representations is also motivated by the recent focus on finite mapping class group orbits in character varieties (see for instance~\cite{LLL, arnaud-sam, LT} and references therein) which led to a complete understanding of finite orbits for surface group representations into $\mathrm{SL}_2\C$. Although not all finite orbits of conjugacy classes of surface group representations into $\mathrm{SL}_2\C$ are made of totally elliptic representations, most of them are (see~\cite{arnaud-sam} for more details). Totally elliptic representations can thus be viewed as a natural generalization of representations whose conjugacy classes lie in finite mapping class group orbits.

\subsection{Perspectives away from $\PSL_2\C$}\label{sec:perspectives}
This note also provides an opportunity to survey recent developments concerning totally elliptic representations into Lie groups $G$ other than $\psl$. Several families of connected components in relative character varieties---all consisting entirely of conjugacy classes of totally elliptic representations---have been identified in the case where $G$ is a Hermitian Lie group. These components are all compact and arise specifically for punctured spheres. The first examples were found by Tholozan--Toulisse when $G$ is the Lie group $\mathrm{SU}(p,q)$, as well as when $G$ is $\mathrm{Sp}(2n,\R)$ or $\mathrm{SO}^\ast (2n)$~\cite{tholozan-toulisse}. Analogous compact components have been identified by Feng--Zhang when $G$ is the identity component of $\mathrm{SO}(2,n)$~\cite{feng-zhang}. Both group of authors studied the topology of relative character varieties via the non-abelian Hodge correspondence and identified compact components in this way. Not only did they prove that every representation $\rho\colon\pi_1\surface \to G$ whose conjugacy class belongs to one of the compact components they found is totally elliptic, but they also proved that the local system associated to $\rho$ supports a \emph{universal variation of Hodge structure}. This means that for every punctured Riemann sphere structure on $\surface$, there exists a $\rho$-equivariant holomorphic map from the universal cover~$\widetilde\surface$ to the symmetric space $G/K$. The same property for DT representations was established by Mondello~\cite{mondello}.

In contrast to DT representations, which all have Zariski dense image, it remains unclear whether every compact component described in~\cite{tholozan-toulisse, feng-zhang} contains a Zariski dense representation (though almost all of them do, given the prevalence of Zariski dense representations in character varieties). Moreover, the topology of these compact components is still not fully understood, whereas DT components are known to be homeomorphic, and even symplectomorphic, to complex projective spaces~\cite[Theorem~4]{deroin-tholozan}.

There are however more explicit examples of components in relative character varieties made of totally elliptic representations that were discovered by Marc-Zwecker~\cite{arielle}. Marc-Zwecker's components are also compact and made of representations of a pair of pants into $\mathrm{SU}(2,1)$ where the peripheral curves are mapped to products of reflections through three complex geodesic lines in the complex hyperbolic space of real dimension four. The three complex geodesic lines are the \emph{mirrors} and they are arranged as a triangle. The resulting components are homeomorphic to 2-dimensional spheres. The geometric nature of the representations in Marc-Zwecker's components may lead to other components of totally elliptic representations for spheres with a higher number of punctures obtained by ``chaining'' the triangles of mirrors in the spirit of~\cite{action-angle}.

A natural question to ask is whether the class of Lie groups for which there exist (non-trivial) connected components entirely made of totally elliptic surface group representations is larger than the class of Hermitian Lie groups. It is nevertheless unclear whether such components are necessarily compact when the target group is not $\psl$ (compare Proposition~\ref{prop:totally-elliptic-bounded}), but it seems likely to.
\begin{question}
Are there semisimple real Lie groups other than Hermitian Lie groups for which there exist compact connected components of maximal expected dimension in relative character varieties which are made of totally elliptic representations and contain Zariski dense representations?       
\end{question}

One should nevertheless be aware that ``large'' Lie groups, unlike $\mathrm{SL}_2\R$ or $\mathrm{SL}_2\C$, may admit a subgroup~$\Gamma$ containing only elliptic elements---a \emph{purely elliptic} subgroup---but such that $\Gamma$ is not contained in a compact subgroup.\footnote{A proof that every purely elliptic subgroup of $\mathrm{SL}_2\C$ is conjugate to a subgroup of $\mathrm{SU}(2)$ can be found in~\cite[Theorem~2.13.1]{complex-functions}. The analogous statement for $\mathrm{SL}_2\R$ and $\mathrm{SO}(2)$ is a direct consequence of Lemma~\ref{lem:commutator-hyperbolic}.} Bass gives an example of such a $\Gamma$ which is isomorphic to a rank-2 free subgroup of $\mathrm{SL}_3\C$~\cite[Counterexample~1.10]{bass}. Bass' example is not Zariski dense and, in fact, it could not be as explained for instance in~\cite{prasad}. However, when $\C$ is replaced by a non-Archimedean valued field $\mathbb{F}$, then the situation is similar to the case of $\mathrm{SL}_2\C$ as Parreau proved that a finitely generated subgroup of $\mathrm{SL}_n\mathbb{F}$ containing only elliptic elements is necessarily bounded~\cite[Théorème~1]{parreau}. This, of course, does not rule out the existence of totally elliptic surface group representations into $\mathrm{SL}_n\mathbb{F}$ whose image is not bounded.

\subsection{Acknowledgements}
Many thanks to Samuel Bronstein and Nicolas Tholozan for enriching conversations on totally elliptic representations. I'm grateful to Bertrand Deroin for sparking my curiosity about the conjectures related to the Bowditch--Goldman program, and to Maxime Wolff for teaching me the elegant geometric argument used to prove Lemma~\ref{lem:commutator-hyperbolic}. I also extend my gratitude to my friends Jacques Audibert and Xenia Flamm for their insightful comments on an earlier draft of this paper and for pointing out to me Parreau's paper.

\section{Totally elliptic representations in $\psl$}\label{sec:proof-of-main-theorem}
\subsection{Overview}
This section contains the proof of Theorem~\ref{thm-intro} which unfolds over the next subsections. We start by showing that when the genus of $\surface$ is at least $1$, then the only totally elliptic representations are orthogonal representations (Section~\ref{sec:positive-genus}). The arguments mostly rely on the observation that a commutator of elliptic elements in $\psl$ is itself elliptic only when it is trivial (Lemma~\ref{lem:commutator-hyperbolic}). We then briefly recall what DT representations are (Section~\ref{sec:genus-zero}) and prove that a totally elliptic genus-0 surface group representation that is not orthogonal must be a DT representation. We start with the case of spheres with three or four punctures (Sections~\ref{sec:n=3} and~\ref{sec:n=4}), and then proceed by induction to prove the statement for spheres with an arbitrary number of punctures (Section~\ref{sec:n-geq-5}).

\subsection{Positive genus}\label{sec:positive-genus}
We start by proving that a totally elliptic representation $\rho\colon\pi_1\surface\to\psl$ of a surface $\surface$ of genus $g\geq 1$ is necessarily an orthogonal representation (Propositions~\ref{prop:orthogonal-close-surfaces} and~\ref{prop:orthogonal-punctured-surfaces}). Most of the reasoning will rely on the following standard results about $\psl$, which we include for the sake of completeness.

According to Definition~\ref{def:totally-elliptic}, the elliptic elements of $\psl$ fall into two categories: the regular elliptic elements and the identity. The \emph{regular} elliptic elements are those whose trace is smaller than two in absolute value. Geometrically, they act on the hyperbolic plane $\HH$ by rotations around their unique fixed point. The subset of regular elliptic elements in $\psl$ is homeomorphic to the open ball $(0,2\pi)\times \HH$, where the first factor is the \emph{rotation angle} and the second is the fixed point (the centre of rotation). A regular elliptic element $A\in\psl$ is contained in a unique maximal compact subgroup which is a conjugate of $\mathrm{PSO}(2)$. It is therefore abelian and consists of all the elements of $\psl$ which fix the unique fixed point of $A$. It also coincides with the centralizer of $A$ in $\psl$.
\begin{lem}[{see e.g.~\cite[Theorem~7.39.2]{beardon}}]\label{lem:commutator-hyperbolic}
If $A\in\psl$ is regular elliptic and $X\in\psl$ is any element, then the commutator
\[
[A,X]=AXA^{-1}X^{-1}
\] 
is either trivial or hyperbolic. Furthermore, the commutator is trivial if and only if~$X$ is elliptic and fixes the unique fixed point of $A$.
\end{lem}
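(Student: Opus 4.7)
My plan is to work geometrically in $\HH$. Since $A$ is regular elliptic, it acts as a rotation by some angle $\theta\in(0,2\pi)$ around its unique fixed point $p\in\HH$. Conjugation sends fixed points to fixed points, so $XA^{-1}X^{-1}$ is a rotation by $-\theta$ around $X(p)$, and hence
\[
[A,X] \;=\; A\cdot\bigl(XA^{-1}X^{-1}\bigr)
\]
is the composition of two rotations of opposite angles around the (possibly distinct) centres $p$ and $X(p)$. The argument then branches on whether or not $X$ fixes $p$.

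First I would dispatch the easy case $X(p)=p$. Then the two rotations are concentric and cancel, so $[A,X]=\id$. Moreover, $X$ lies in the stabilizer of $p$ in $\psl$, which is the unique maximal compact subgroup containing $A$ (a conjugate of $\PSO(2)$), so $X$ is elliptic and fixes $p$, matching the characterization in the lemma.

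The heart of the argument is the case $X(p)\neq p$, where I want to show that $[A,X]$ is hyperbolic. For this, let $m$ be the unique geodesic of $\HH$ through $p$ and $X(p)$, and use the standard decomposition of a rotation as a product of two reflections through geodesics meeting at its centre. Write $A=r_{\ell_1}\circ r_m$, where $\ell_1$ is the geodesic through $p$ meeting $m$ at angle $\theta/2$, and write $XA^{-1}X^{-1}=r_m\circ r_{\ell_4}$, where $\ell_4$ is the geodesic through $X(p)$ meeting $m$ at angle $\theta/2$ on the same rotational side as $\ell_1$. The two middle reflections cancel, so $[A,X]=r_{\ell_1}\circ r_{\ell_4}$. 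By construction, $\ell_4$ is the image of $\ell_1$ under the hyperbolic translation along $m$ sending $p$ to $X(p)$; if $\ell_1$ and $\ell_4$ are ultraparallel, then $[A,X]$ is a product of reflections across two ultraparallel geodesics, which is a hyperbolic isometry whose axis is their common perpendicular.

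The main technical step is thus verifying the ultraparallel claim. The cleanest route is to pass to the upper half-plane model with $m$ as the imaginary axis, so that the translation becomes a dilation $z\mapsto\lambda z$ with $\lambda>0$, $\lambda\neq 1$, and $\ell_1$, $\ell_4$ become two semicircles which are scaled copies of each other; a direct check of centres and radii shows that they neither meet in $\HH$ nor share an endpoint on $\partial\HH$, ruling out both the elliptic and the parabolic alternatives. For the converse in the \emph{furthermore} part, if $[A,X]=\id$ then $X$ commutes with $A$ and therefore preserves the fixed-point set of $A$ in $\overline{\HH}$; since a regular elliptic element has $p$ as its only fixed point (and no fixed point on the boundary), $X$ must fix $p$, reducing to the first case and completing the characterization.
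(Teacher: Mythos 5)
Your proposal is correct and follows essentially the same argument as the paper: both decompose $A$ and $XA^{-1}X^{-1}$ as products of reflections sharing the reflection through the geodesic joining $p$ and $X(p)$ as a common factor, so that $[A,X]$ becomes a product of reflections across two tilted geodesics admitting a common perpendicular. The only difference is that you sketch an explicit upper-half-plane computation to justify that these two geodesics are ultraparallel, a point the paper asserts (correctly) but does not verify.
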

\begin{proof}
First, if $[A,X]=1$, then $X$ belongs to the centralizer of $A$, which means that $X$ is elliptic and fixes the unique fixed point of $A$. Now, if $[A,X]\neq 1$ and $o$ denotes the unique fixed point of $A$, then $XA^{-1}X^{-1}$ is regular elliptic with fixed point $X.o\neq o$. Note that if $\alpha$ is the rotation angle of $A$, then the rotation angle of $XA^{-1}X^{-1}$ is $2\pi-\alpha$. Let $\ell$ denote the unique geodesic line in~$\HH$ through $o$ and $X.o$. Similarly, let $\ell_1$ and $\ell_2$ denote the geodesic lines obtained by rotating~$\ell$ anti-clockwise by an angle $\alpha/2$ around $o$, respectively $X.o$.
\begin{center}
\begin{tikzpicture}[scale=2]
    \draw[gray] (1,0) to (-1,0);
    \node[anchor=south] at (0,0) {\small $\ell$};
    \node[anchor=south] at (0,0.3) {\small $\ell_1$};
    \node[anchor=north] at (0,-.3) {\small $\ell_2$};
    \node[anchor=north] at (-.5,0) {\small $o$};
    \node[anchor=south] at (.5,0) {\small $X.o$};
    \tkzDefPoint(0,0){O}
    \tkzDefPoint(1,0){A}
    \tkzDrawCircle(O,A)
    \tkzDefPoint(0.5,0){Xo}
    \tkzDefPoint(-0.5,0){o}
    \tkzDefPoint(0,0.3){w1}
    \tkzDefPoint(0,-0.3){w2}
    \tkzClipCircle(O,A)
    \tkzDefCircle[orthogonal through=Xo and w2](O,A)
    \tkzGetPoints{G}{g}
    \tkzDrawCircle(G,g)
    \tkzDefCircle[orthogonal through=o and w1](O,A)
    \tkzGetPoints{H}{h}
    \tkzDrawCircle(H,h)
    \tkzDrawPoints[color=black,fill=black,size=4](Xo,o)
    
    \draw[apricot] (-.3,-.3) ++(0, .3) arc (0:18:.3) node[anchor=south]{\small $\alpha/2$};
    \draw[apricot] (.3,-.3) ++(0, .3) arc (180:198:.3) node[anchor=north]{\small $\alpha/2$};
\end{tikzpicture}
\end{center}
The orientation-reversing reflections of $\HH$ through the lines $\ell$, $\ell_1$, and $\ell_2$ are denoted by $\sigma$, $\sigma_1$, and $\sigma_2$, respectively. With this notation, we can write $A=\sigma_1\sigma$ and $XA^{-1}X^{-1}=\sigma\sigma_2$, so that $[A,X]=\sigma_1\sigma_2$. We conclude that $[A,X]$ preserves the common orthogonal geodesic line to $\ell_1$ and $\ell_2$, and is therefore hyperbolic.
\end{proof}
Lemma~\ref{lem:commutator-hyperbolic} plays a central role in our discussion because it tells us that the commutator of two elliptic elements is elliptic if and only if it is trivial. It turns out that two simple closed curves with geometric intersection number one can be lifted to specific fundamental group elements whose commutator represents a simple closed curve too. We must however be careful, because a simple closed curve on $\surface$ is represented by many elements in $\pi_1\surface$, all conjugate to each other.
\begin{lem}\label{lem:commutator-scc-is-scc}
Let $\alpha$ and $\beta$ be two simple closed loops on $\surface$ intersecting once at $x\in \surface$ and such that the geometric intersection number of their free homotopy classes is one. We denote by $a$ and $b$ the fundamental group elements with basepoint $x$ associated to $\alpha$ and $\beta$. If $[a,b]$ is a non-trivial fundamental group element (which is always the case if $\surface$ is not a closed torus), then it represents a simple closed curve on $\surface$.
\end{lem}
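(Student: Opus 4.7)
The plan is to realize $[a,b]$ geometrically as the boundary circle of a regular neighborhood of $\alpha\cup\beta$ in $\surface$. First I would observe that, because the geometric intersection number of $\alpha$ and $\beta$ equals the cardinality of their actual intersection, the two curves are in minimal position, and hence $\alpha\cup\beta\subset\surface$ is an embedded wedge of two circles meeting transversely at $x$. Let $N$ be a small regular neighborhood of this figure-eight in $\surface$. Then $N$ is a compact orientable surface deformation-retracting onto $\alpha\cup\beta$, so $\chi(N)=-1$. The key local input is that the transverse crossing at $x$ yields a $4$-valent ribbon-graph vertex whose cyclic order of half-edges alternates between those of $\alpha$ and $\beta$; tracing the boundary of the thickening shows that $\partial N$ has exactly one component. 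From $\chi(N)=-1$ and $|\partial N|=1$ I conclude that $N$ is a one-holed torus embedded in $\surface$.

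Next, $\pi_1(N,x)$ is free of rank two, generated by the cores of the two bands, which are precisely the based loops $\alpha$ and $\beta$; in particular, $a$ and $b$ generate $\pi_1(N,x)$. The standard presentation of the one-holed torus gives that $\partial N$, taken as a free homotopy class, is the conjugacy class of the commutator $[a,b]$ (up to inversion, which is irrelevant for free homotopy classes of unoriented curves). Pushing this identification forward along the inclusion $N\hookrightarrow\surface$, which preserves $a$ and $b$, the free homotopy class of the embedded circle $\partial N$ in $\surface$ is exactly the conjugacy class of $[a,b]$ in $\pi_1\surface$. Thus, whenever $[a,b]\neq 1$, the simple closed curve $\partial N$ is non-null-homotopic and represents $[a,b]$, which is the desired conclusion.

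For the parenthetical statement, suppose $[a,b]=1$. Then $\partial N$ is null-homotopic in $\surface$, hence bounds an embedded disk $D$ in $\surface$ disjoint from the punctures. Gluing $D$ to $N$ along $\partial N$ recovers all of $\surface$, which is therefore a closed torus. Contrapositively, if $\surface$ is not a closed torus then $[a,b]\neq 1$.

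The main obstacle is the first step: identifying $N$ with a one-holed torus (rather than a pair of pants, which also has Euler characteristic $-1$) and then reading off $\partial N$ as $[a,b]$. Both facts are classical in surface topology, but they both rely on using the transversality of the intersection to fix the cyclic order around the unique vertex of the ribbon graph, and on a consistent choice of orientations for $\alpha$ and $\beta$. Once these two points are settled, the remainder of the argument is formal.
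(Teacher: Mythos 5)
Your proof is correct and follows essentially the same route as the paper: build a regular neighborhood of $\alpha\cup\beta$ (the paper's ``union of two thin ribbons''), recognize it as a one-holed torus, and identify its boundary with $[a,b]$. You supply the Euler characteristic and boundary-component count that the paper leaves implicit, and you add the contrapositive argument for the parenthetical, but the underlying idea is the same.
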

\begin{proof}
Enlarge the two loops $\alpha$ and $\beta$ by replacing them with a thin ribbon. The union of the two ribbons is homeomorphic to a torus with one boundary component which is an embedded circle in~$\surface$. The free homotopy class of that circle is non-trivial as long as $\surface$ is not a closed torus, and it corresponds to the free homotopy class determined by the fundamental group element~$[a,b]$.
\end{proof}
As we are about to see, Lemmas~\ref{lem:commutator-hyperbolic} and~\ref{lem:commutator-scc-is-scc} together provide the main obstruction for constructing totally elliptic surface group representations. Let us start by introducing some notation. We pick a \emph{geometric generating family} for $\pi_1\surface$ which consists in $2g+n$ generators $a_1,\ldots,a_g$, $b_1,\ldots,b_g$, and $c_1,\ldots,c_n$ satisfying the unique relation
\[
\prod_{i=1}^g[a_i,b_i]=\prod_{i=1}^n c_i.
\]
A possible choice of geometric generators is given by the homotopy classes of the loops illustrated on Figure~\ref{fig:geometric-generators}.
 
\begin{center}
\begin{figure}[!ht]
\begin{tikzpicture}
\node[anchor=south west, inner sep=0mm]{\includegraphics[scale=.8]{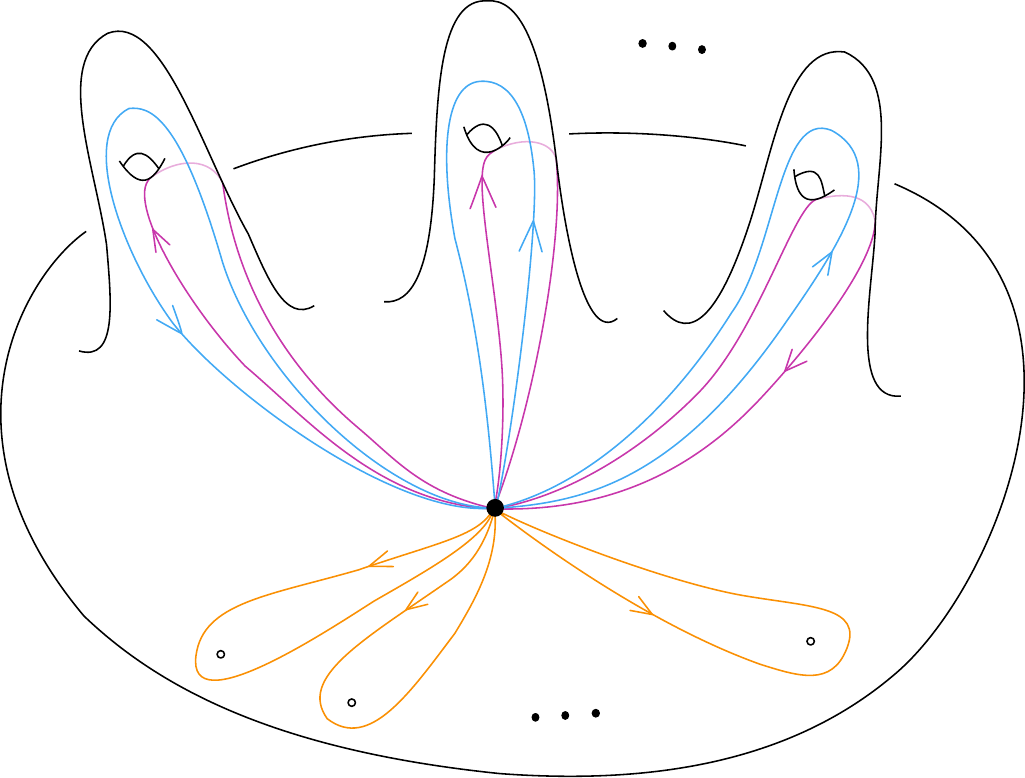}};
\draw[mauve] (10.55,5) node[anchor=south west, inner sep=0mm] {\LARGE $a_1$};
\draw[mauve] (2.1,7.5) node[anchor=south west, inner sep=0mm] {\LARGE $a_g$};
\draw[mauve] (7.45,5.5) node[anchor=south west, inner sep=0mm] {\LARGE $a_{g-1}$};
\draw[sky] (10.9,7.2) node[anchor=south west, inner sep=0mm] {\LARGE $b_1$};
\draw[sky] (2.15,5.5) node[anchor=south west, inner sep=0mm] {\LARGE $b_g$};
\draw[sky] (5.6,5.5) node[anchor=south west, inner sep=0mm] {\LARGE $b_{g-1}$};
\draw[apricot] (8.2,1.9) node[anchor=south west, inner sep=0mm] {\LARGE $c_1$};
\draw[apricot] (6.05,1.4) node[anchor=south west, inner sep=0mm] {\LARGE $c_{n-1}$};
\draw[apricot] (4.5,2.95) node[anchor=south west, inner sep=0mm] {\LARGE $c_n$};
\end{tikzpicture}
\caption{A system of geometric generators for a surface $\surface$ of genus $g$ with $n$ punctures.}\label{fig:geometric-generators}
\end{figure}
\end{center}

\begin{rmk}\label{rem:concatenation-convention}
We adopt the convention of concatenating loops by starting with the rightmost loop. More precisely, if $\gamma_1$ and $\gamma_2$ are two loops on $\surface$ based at the same point, then their concatenation $\gamma_1\gamma_2$ is the loop obtained by going along $\gamma_2$ first, and then along $\gamma_1$. This rule applies for fundamental group multiplications too.
\end{rmk}

\begin{lem}\label{lem:list-of-scc-for-genus}
If $\surface$ is a surface of genus $g\geq 2$ and $i\neq j$ are two distinct indices in $\{1,\ldots,g\}$, then the following fundamental group elements represent simple closed curves on $\surface$:
\begin{align*}
&[a_i,b_i],\quad [a_j,b_j], \quad [b_i^{-1}b_j^{-1},a_i],\quad [a_i^{-1}a_j^{-1},b_j],\\
&[b_j^{-1}b_i^{-1},a_j],\quad [a_j^{-1}a_i^{-1},b_j],\quad [b_ia_i^{-1}b_j,a_j],\quad [a_ib_i^{-1}a_j,b_j].
\end{align*}
\end{lem}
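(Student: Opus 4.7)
The plan is to apply Lemma~\ref{lem:commutator-scc-is-scc} to each of the eight listed commutators $[u,v]$. For each of them, it suffices to exhibit simple closed loops representing $u$ and $v$ that intersect once at a common basepoint with geometric intersection number one, and to verify that $[u,v]$ is non-trivial in $\pi_1\surface$. The cases $[a_i,b_i]$ and $[a_j,b_j]$ are immediate from the choice of geometric generators in Figure~\ref{fig:geometric-generators}.

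For the remaining six commutators, the strategy is to work inside the genus-2 subsurface $S_{ij}\subset\surface$ spanned by the $i$-th and $j$-th handles, together with a spine arc joining them. Each first entry of the form $b_i^{\pm 1}b_j^{\pm 1}$ or $a_i^{\pm 1}a_j^{\pm 1}$ is freely homotopic to a simple closed loop obtained by band-summing the two disjoint simple loops representing the individual generators along an arc following the spine. Choosing this band to be disjoint from the second entry of the commutator (which is one of $a_i$, $b_j$, $a_j$, depending on the case), one can arrange the resulting simple loop to intersect the second entry exactly once at the basepoint, with geometric intersection number one.

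For the two more elaborate first entries $b_ia_i^{-1}b_j$ and $a_ib_i^{-1}a_j$, the plan is to first replace $b_i$ (resp.\ $a_i$) by the simple closed loop of homological type $(-1,1)$ in the $i$-handle representing $b_ia_i^{-1}$ (resp.\ $a_ib_i^{-1}$), and then band-sum with $b_j$ (resp.\ $a_j$) along the spine arc as before. Non-triviality of each commutator is then established by projecting to $H_1(\surface;\Z)$: in every case, the two entries $u$ and $v$ have linearly independent homology classes, hence cannot lie in a common cyclic subgroup of $\pi_1\surface$. Since in a surface group of genus at least two (closed or punctured) two elements commute if and only if they lie in a common cyclic subgroup, this gives $[u,v]\neq 1$.

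The main obstacle of the proof is the careful drawing and verification of the simple closed loops representing $b_ia_i^{-1}b_j$ and $a_ib_i^{-1}a_j$, which mix generators from two different handles in a less standard way. While the band-sum construction is flexible enough to produce the required loops, making the pictures explicit and checking the intersection count with the appropriate second entry of the commutator requires care with orientations and with the concatenation rule of Remark~\ref{rem:concatenation-convention}.
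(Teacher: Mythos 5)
Your proposal follows essentially the same route as the paper: apply Lemma~\ref{lem:commutator-scc-is-scc} to each commutator by exhibiting a pair of simple closed loops intersecting once, and verify the constructions against Figure~\ref{fig:geometric-generators}. The band-sum language and the homological non-triviality check are a useful way to organize the argument but do not change the substance; in fact the non-triviality step is redundant, since the parenthetical in Lemma~\ref{lem:commutator-scc-is-scc} (proved in its own proof) already gives $[u,v]\neq 1$ for any surface other than the closed torus, hence for all $g\geq 2$.

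The one place where your write-up is imprecise is exactly the subtlety that the paper singles out as the crux. You say that each first entry is ``freely homotopic'' to a simple loop obtained by band-summing. But Lemma~\ref{lem:commutator-scc-is-scc} operates on \emph{based} loops: you need a simple loop $\alpha$ and a simple loop $\beta$, based at the same point $y$, intersecting only there with geometric intersection number one, such that the element $[\alpha,\beta]\in\pi_1(\surface,y)$ is conjugate to the specific element $[u,v]$ from the lemma's list. If $\alpha$ and $\beta$ are only known to be freely homotopic to $u$ and $v$, then $\alpha=wuw^{-1}$ and $\beta=w'vw'^{-1}$ for possibly different conjugators $w,w'$, and $[\alpha,\beta]$ need not be conjugate to $[u,v]$. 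In other words, choosing the ``band'' arbitrarily could change the conjugacy class of the resulting commutator. What saves the argument (and what the paper's figure is implicitly checking) is that the band can be chosen to pass through the common basepoint $x$ in a way compatible with the concatenation convention of Remark~\ref{rem:concatenation-convention}, so that the band-sum is \emph{based}-homotopic to the literal concatenation $u$. This is not automatic and depends on the cyclic order of the arcs of the generators at $x$. Your final paragraph acknowledges that explicit pictures are ultimately needed, which is precisely the paper's method; you should just state the requirement as based homotopy rather than free homotopy so that the application of Lemma~\ref{lem:commutator-scc-is-scc} is legitimate.
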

\begin{proof}
Each pair of fundamental group elements appearing in one of the commutators above represents two simple closed curves on $\surface$ with geometric intersection number one. By Lemma~\ref{lem:commutator-scc-is-scc}, we can thus expect their commutators to represent simple closed curves too. This is however not automatic because of the issues we mentioned before stating Lemma~\ref{lem:commutator-scc-is-scc}. Thankfully, the way we picked the generators $a_1,\ldots,a_g$ and $b_1,\ldots,b_g$ ensures that each commutator does indeed represent a simple closed curve. This can be seen by drawing the corresponding loops using the generators of Figure~\ref{fig:geometric-generators} and the concatenation conventions described in Remark~\ref{rem:concatenation-convention}, like in the figure below.
\begin{center}
\begin{tikzpicture}
\node[anchor=south west, inner sep=0mm]{\includegraphics[scale=.8]{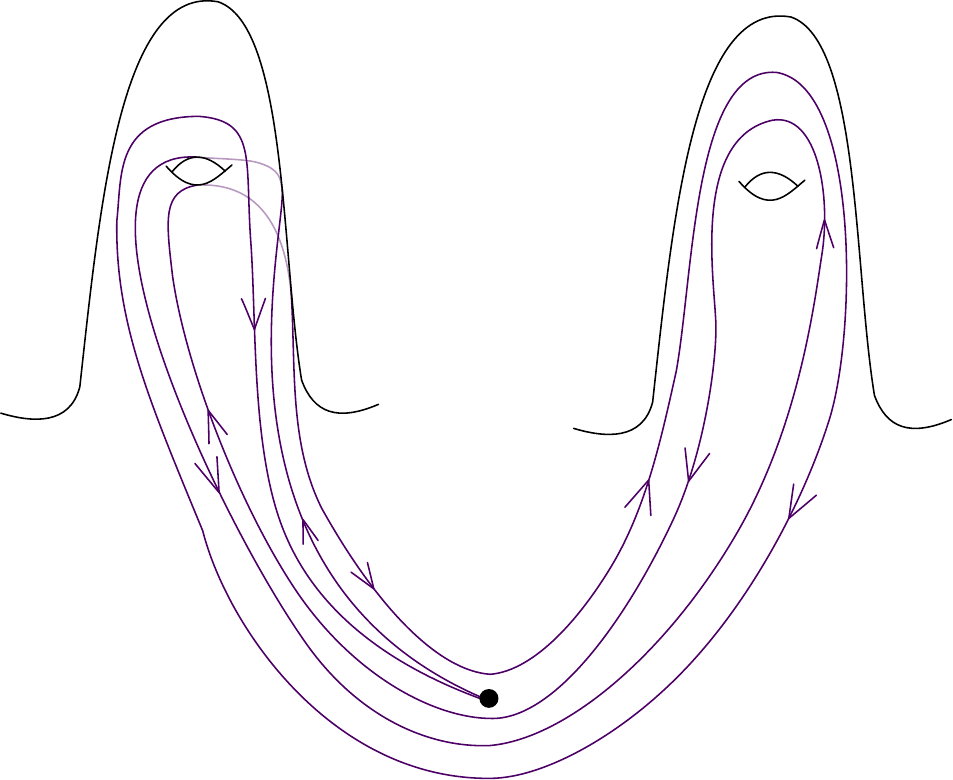}};
\draw (4.1,9) node[anchor=south west, inner sep=0mm] {\LARGE $i$};
\draw (8.95,9) node[anchor=south west, inner sep=0mm] {\LARGE $j$};
\draw[plum] (5.5,3) node[anchor=south west, inner sep=0mm] {\LARGE $[b_i^{-1}b_j^{-1}, a_1]$};
\end{tikzpicture}
\end{center}
\end{proof}

\begin{prop}\label{prop:orthogonal-close-surfaces}
Assume that $\surface$ is a closed surface of genus $g\geq 1$. If $\rho\colon\pi_1\surface\to\psl$ is a totally elliptic representation, then $\rho$ is an orthogonal representation.
\end{prop}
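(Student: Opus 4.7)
The plan is to split into the cases $g=1$ and $g\geq 2$.

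For $g=1$, the surface $\surface$ is a closed torus, so $\pi_1\surface$ is abelian and generated by two simple closed loops $a,b$ satisfying $[a,b]=1$. The images $\rho(a),\rho(b)$ are elliptic and commute. If both are non-trivial, Lemma~\ref{lem:commutator-hyperbolic} (applied with $A=\rho(a)$, $X=\rho(b)$, and trivial commutator) forces $\rho(b)$ to fix the unique fixed point of $\rho(a)$, so both lie in a common maximal compact subgroup conjugate to $\PSO(2)$. If at most one of them is non-trivial, the image is cyclic and generated by an elliptic element, hence again contained in a maximal compact subgroup.

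For $g\geq 2$, I may assume $\rho$ is non-trivial and, after relabelling, that $\rho(a_1)\neq 1$. Then $\rho(a_1)$ is regular elliptic with some fixed point $p\in \HH$, and I set $K$ to be its stabilizer in $\psl$, a maximal compact subgroup conjugate to $\PSO(2)$. The goal is to show $\rho(a_i),\rho(b_i)\in K$ for every $i$, which then implies that $\rho$ is orthogonal. The key mechanism is the combination of Lemma~\ref{lem:list-of-scc-for-genus}, which supplies a rich family of commutators representing simple closed curves, with Lemma~\ref{lem:commutator-hyperbolic}, which ensures that whenever such a commutator is elliptic and one of the two entries is a regular elliptic element, it must in fact be trivial.

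The first step is to apply this to $[a_1,b_1]$ to conclude that $\rho(b_1)\in K$. For each $j\neq 1$, the simple closed curve $[b_1^{-1}b_j^{-1},a_1]$ from Lemma~\ref{lem:list-of-scc-for-genus} then yields $\rho(b_1^{-1}b_j^{-1})\in K$, which combined with the previous step gives $\rho(b_j)\in K$. The most delicate step is to place $\rho(a_j)$ in $K$ for $j\neq 1$: if $\rho(a_j)\neq 1$, letting $q$ be its fixed point, the task reduces to proving $q=p$, and I would split into three subcases depending on which of $\rho(b_1),\rho(b_j)$ are trivial. (a) If $\rho(b_j)\neq 1$, the commutator $[a_1^{-1}a_j^{-1},b_j]$ forces $\rho(a_1^{-1}a_j^{-1})\in K$, hence $\rho(a_j)\in K$. (b) If $\rho(b_j)=1$ but $\rho(b_1)\neq 1$, the commutator $[b_j^{-1}b_1^{-1},a_j]$ forces the non-trivial element $\rho(b_1^{-1})\in K$ to fix $q$, giving $q=p$. (c) If $\rho(b_1)=\rho(b_j)=1$, the image of $[b_1a_1^{-1}b_j,a_j]$ simplifies to $[\rho(a_1^{-1}),\rho(a_j)]$, which by total ellipticity together with Lemma~\ref{lem:commutator-hyperbolic} forces the non-trivial element $\rho(a_1)\in K$ to fix $q$, again giving $q=p$.

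The main obstacle I anticipate is the bookkeeping required to handle all the degenerate subcases in which various generators are sent to the identity. The list in Lemma~\ref{lem:list-of-scc-for-genus} appears precisely tailored to this purpose: whatever subset of generators is trivialised by $\rho$, one of the tabulated simple closed curves retains enough non-trivial information to transfer fixed-point data across all the generators.
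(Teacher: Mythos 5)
Your proof is correct and follows essentially the same strategy as the paper's: split into $g=1$ and $g\geq 2$, then for $g\geq 2$ combine Lemma~\ref{lem:commutator-hyperbolic} with the explicit list of simple closed curves in Lemma~\ref{lem:list-of-scc-for-genus} to propagate the constraint that every generator must fix the common fixed point. The only cosmetic difference is organizational: you normalize once by relabelling so that $\rho(a_1)\neq 1$ and show all generators land in the fixed stabilizer $K$, whereas the paper argues for an arbitrary pair $(i,j)$ with $A_i\neq 1$ and then observes the conclusion holds for all $j$; the subcase analysis in your step (3) matches the paper's treatment of $B_j\neq 1$, $B_i\neq 1$, and $B_i=B_j=1$ point for point.
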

\begin{proof}
We shall write $A_i=\rho(a_i)$ and $B_i=\rho(b_i)$. If $g=1$, then the image of $\rho$ is generated by $A_1$ and $B_1$ which satisfy $[A_1,B_1]=1$. So, the image of $\rho$ is generated by two commuting elliptic elements, hence contained in a compact subgroup of $\psl$.

Let us now consider the case $g\geq 2$. If $\rho$ is the trivial representation, then it is orthogonal. If not, then there exists an index $i\in\{1,\ldots,g\}$ such that either $A_i\neq 1$ or $B_i\neq 1$. We will look at the case where $A_i\neq 1$ (the case where $B_i\neq 1$ is identical). Let $j\neq i$ be another index. Since $\rho$ is totally elliptic, it maps all the simple closed curves from Lemma~\ref{lem:list-of-scc-for-genus} to the identity because of Lemma~\ref{lem:commutator-hyperbolic}. In particular, we have $[A_i,B_i]=1$ and $[A_i,B_i^{-1}B_j^{-1}]=1$, implying that $[A_i,B_j]=1$. Since $[A_i,B_i]=1$, up to conjugating $\rho$, we may assume that $A_i$ and $B_i$ belong to $\PSO(2)$. From $[A_i,B_j]=1$ and our assumption that $A_i\neq 1$, we conclude that $B_j\in\PSO(2)$.

We also infer from Lemmas~\ref{lem:commutator-hyperbolic} and~\ref{lem:list-of-scc-for-genus} that $[A_j,B_j]=[A_j, B_j^{-1}B_i^{-1}]=[A_j,B_iA_i^{-1}B_j]= 1$. If $B_j\neq 1$, then we immediately conclude that $A_j\in\PSO(2)$ from $[A_j,B_j]=1$. Since $[A_j,B_j]= 1$ and $[A_j, B_j^{-1}B_i^{-1}]= 1$, we deduce that $[A_j,B_i]= 1$ which also implies $A_j\in\PSO(2)$ as soon as $B_i\neq 1$. If both $B_i=1$ and $B_j=1$, then we alternatively observe that $[A_i^{-1},A_j]=[B_iA_i^{-1}B_j,A_j]=1$ which gives $A_j\in\PSO(2)$ as well, because $A_i\neq 1$. 

In conclusion, we proved that $A_i,B_i,A_j,B_j\in\PSO(2)$. Since the indices $i$ and $j$ were picked arbitrarily, we conclude that the image of $\rho$ is contained in $\PSO(2)$, proving that $\rho$ is orthogonal.
\end{proof}

We now consider the case where $\surface$ has punctures. We first identify more simple closed curves.

\begin{lem}\label{lem:list-of-scc-for-genus-and-punctures}
If $\surface$ is a surface of genus $g\geq 1$ and with $n\geq 1$ punctures, then for any index $i\in\{1,\ldots,g\}$ and $j\in\{1,\ldots,n\}$ the following fundamental group elements represent simple closed curves on~$\surface$:
\begin{equation}\label{eq:scc-one-puncture}
a_i^{-1}[a_i,b_i]c_j^{-1}a_ic_j\quad\text{and}\quad b_i[a_i,b_i]c_j^{-1}b_i^{-1}c_j.
\end{equation}
If furthermore $n\geq 2$ and $k\in\{1,\ldots,n\}$ is another index such that $j>k$, then
\begin{equation}\label{eq:scc-two-punctures}
b_ic_kb_i^{-1}c_jb_ia_i^{-1}b_i^{-1}c_k^{-1}b_i^{-1}c_j^{-1}
\end{equation}
represents a simple closed curve too.
\end{lem}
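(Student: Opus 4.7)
The plan mirrors the strategy used in the proof of Lemma~\ref{lem:list-of-scc-for-genus}: for each word in the statement, I would produce an explicit loop on $\surface$, drawn with respect to the generators of Figure~\ref{fig:geometric-generators} and the right-to-left concatenation convention of Remark~\ref{rem:concatenation-convention}, and then read off from the picture that the resulting loop is simple and homotopically non-trivial. In other words, I treat all three identities as statements about specific based loops in a regular neighborhood of the $i$-th handle and of one (resp.\ two) selected punctures, rather than as abstract manipulations in $\pi_1\surface$.

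For the two elements in~\eqref{eq:scc-one-puncture}, the geometric content is that each represents the boundary of a once-punctured torus $S_{i,j}\subset\surface$, where $S_{i,j}$ is a regular neighborhood of the union of the $i$-th handle, the $j$-th puncture, and an embedded arc joining this configuration to the basepoint. Since $S_{i,j}$ has a single boundary component in $\surface$, that boundary is a simple closed curve. The two expressions in~\eqref{eq:scc-one-puncture} then correspond to the two natural ways of traversing $\partial S_{i,j}$ from the basepoint: one that enters and exits the handle along $a_i^{\pm 1}$, the other along $b_i^{\pm 1}$. A figure analogous to the one used in the proof of Lemma~\ref{lem:list-of-scc-for-genus} makes this precise. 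The element in~\eqref{eq:scc-two-punctures} is handled in the same way, with $S_{i,j,k}$ now a regular neighborhood of the $i$-th handle together with the two punctures $c_j$ and $c_k$; this is a twice-punctured torus with a single boundary component, and the condition $j>k$ fixes the cyclic order in which the two $c$-loops appear in the resulting word.

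The only genuine difficulty I anticipate is the combinatorial book-keeping required to match each prescribed word to its geometric loop: one has to trace the concatenation carefully, in the specified right-to-left order, around the handle and past the relevant punctures, while verifying that the drawn loop is embedded. Once the picture is produced, simplicity and non-triviality of the curve follow by direct inspection, completing the argument in the same spirit as Lemma~\ref{lem:list-of-scc-for-genus}.
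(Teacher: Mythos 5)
Your high-level plan---draw explicit loops with respect to Figure~\ref{fig:geometric-generators} and the concatenation convention of Remark~\ref{rem:concatenation-convention}, then inspect the picture---is indeed how the paper proceeds, both here and in Lemma~\ref{lem:list-of-scc-for-genus}. But the geometric description that is meant to tell you \emph{which} loop to draw is incorrect, and this breaks the argument. You claim that each word in~\eqref{eq:scc-one-puncture} is the boundary of a once-punctured torus neighbourhood $S_{i,j}$ of the $i$-th handle and the $j$-th puncture. That boundary is (a conjugate of) $[a_i,b_i]\,c_j$, whose image in $H_1(\surface)$ is $[c_j]$. By contrast, both words in~\eqref{eq:scc-one-puncture} abelianize to the identity: the commutator $[a_i,b_i]$ vanishes and the remaining conjugating factors cancel in pairs. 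Whenever $[c_j]\neq 0$ in $H_1(\surface)$, which holds as soon as $n\geq 2$, the two classes differ, so neither word can be conjugate to $\partial S_{i,j}$. The same computation rules out your interpretation of~\eqref{eq:scc-two-punctures}: the boundary of $S_{i,j,k}$ is a conjugate of $[a_i,b_i]\,c_jc_k$, with $H_1$-class $[c_j]+[c_k]$, whereas $b_ic_kb_i^{-1}c_jb_ia_i^{-1}b_i^{-1}c_k^{-1}b_i^{-1}c_j^{-1}$ abelianizes to $-[a_i]-[b_i]$, again a mismatch under the hypothesis $n\geq 2$ imposed there.

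What the paper does instead is rewrite the first word in~\eqref{eq:scc-one-puncture} as $b_ia_i^{-1}b_i^{-1}\cdot c_j^{-1}a_ic_j$ and the second as $b_ia_ib_ia_i^{-1}b_i^{-1}\cdot c_j^{-1}b_i^{-1}c_j$, exhibiting each as a product of a conjugate of $a_i^{-1}$ with a conjugate of $a_i$ (respectively of $b_i$ with $b_i^{-1}$). The loop to draw is therefore a band sum of two parallel pushoffs of the handle curve $a_i$ (respectively $b_i$), one of which is routed around the $j$-th puncture, not the boundary of a punctured torus. If you start from those factorizations, the drawing-and-inspecting step you propose does go through and matches the paper's figure.
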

\begin{proof}
The first fundamental group element in~\eqref{eq:scc-one-puncture} can be rewritten as $b_ia_i^{-1}b_i^{-1}\cdot c_j^{-1}a_ic_j$, which is the product of a conjugate of $a_i^{-1}$ and a conjugate of $a_i$. A similar observation holds for the second fundamental group element in~\eqref{eq:scc-one-puncture} which is equal to $b_ia_ib_ia_i^{-1}b_i^{-1}\cdot c_j^{-1}b_i^{-1}c_j$ and is thus a product of conjugates of $b_i$ and $b_i^{-1}$. We can draw closed loops that represent these two fundamental group elements, as well as the one in~\eqref{eq:scc-two-punctures}, using the generators of Figure~\ref{fig:geometric-generators} and the concatenation rules of Remark~\ref{rem:concatenation-convention}, like in the figure below. This shows that they all three represent simple closed curves on $\surface$.
\begin{center}
\begin{tikzpicture}[scale=7/8]
\node[anchor=south west, inner sep=0mm]{\includegraphics[scale=.7]{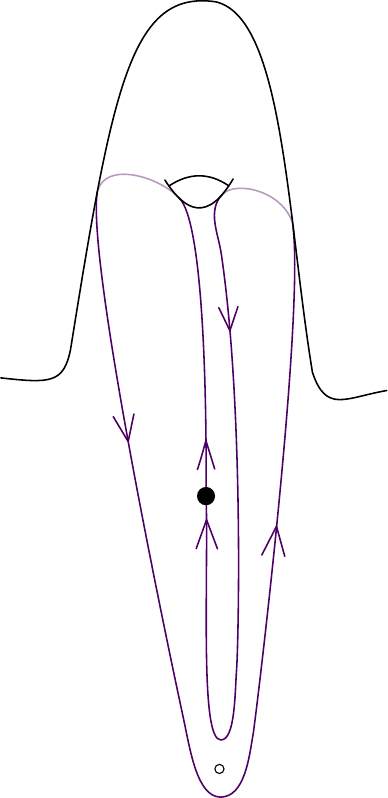}};
\draw (4.1,9) node[anchor=south west, inner sep=0mm] {\LARGE $i$};
\draw (3.7,.2) node[anchor=south west, inner sep=0mm] {\LARGE $j$};
\draw[plum] (1.2,-1) node[anchor=south west, inner sep=0mm] {\LARGE $a_i^{-1}[a_i,b_i]c_j^{-1}a_ic_j$};
\node[anchor=south west, inner sep=0mm] at (7,0) {\includegraphics[scale=.7]{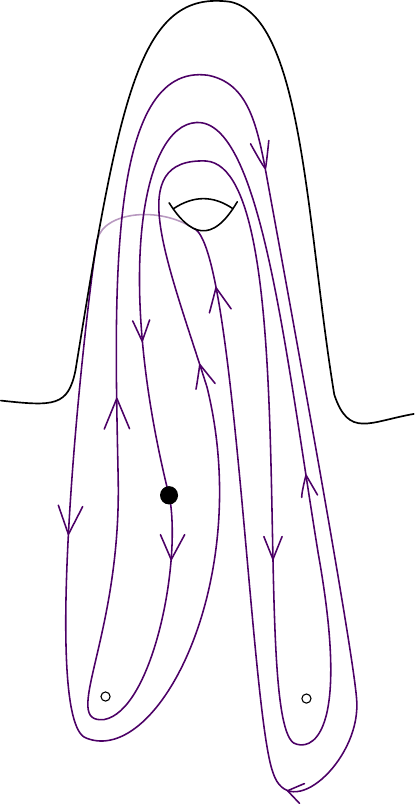}};
\draw (11.3,9) node[anchor=south west, inner sep=0mm] {\LARGE $i$};
\draw (8.5,1.65) node[anchor=south west, inner sep=0mm] {\LARGE $j$};
\draw (11,1.65) node[anchor=south west, inner sep=0mm] {\LARGE $k$};
\draw[plum] (7,-1) node[anchor=south west, inner sep=0mm] {\LARGE $b_ic_kb_i^{-1}c_jb_ia_i^{-1}b_i^{-1}c_k^{-1}b_i^{-1}c_j^{-1}$};
\end{tikzpicture}
\end{center}
\end{proof}

\begin{defn}\label{def:reduced-representation}
A surface group representation $\rho\colon\pi_1\surface\to\PSL_2\C$ is said to be \emph{reduced} if it does not map any peripheral curve on $\surface$ to the identity. Equivalently and in terms of the geometric generators from Figure~\ref{fig:geometric-generators}, $\rho$ is reduced if it does not map any of the generators $c_1,\ldots,c_n$ to the identity. 
\end{defn}
Every non-reduced surface group representation $\rho\colon\pi_1\surface\to\PSL_2\C$ factorizes through a reduced representation $\rho'\colon\pi_1\surface'\to\PSL_2\C$, where~$\surface'$ is obtained by filling in the punctures of~$\surface$ whose corresponding peripheral curves are mapped to the identity by~$\rho$. More precisely, the inclusion $\surface\subset \surface'$ induces a surjective morphism $p\colon \pi_1\surface\twoheadrightarrow \pi_1\surface'$ and $\rho'\circ p = \rho$. Note that if $\rho$ is totally elliptic, then so is $\rho'$.

\begin{prop}\label{prop:orthogonal-punctured-surfaces}
Assume that $\surface$ is a surface of genus $g\geq 1$ and with $n\geq 1$ punctures. If $\rho\colon\pi_1\surface\to\psl$ is a reduced totally elliptic representation, then $\rho$ is orthogonal.
\end{prop}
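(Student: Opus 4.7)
The plan is to analyze the images $A_i = \rho(a_i)$, $B_i = \rho(b_i)$, and $C_j = \rho(c_j)$ of the geometric generators, using that each $C_j$ is non-trivial elliptic (by reducedness) together with Lemma~\ref{lem:commutator-hyperbolic} to extract enough commutation relations to place the image inside a single conjugate of $\PSO(2)$.

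First I would show that $[A_i, B_i] = 1$ for every $i$. When $g \geq 2$ this is part of Lemma~\ref{lem:list-of-scc-for-genus}. When $g = 1$, the loop $[a_1, b_1]$ bounds a regular neighborhood of the $a_1$- and $b_1$-loops, which is a one-holed torus in $\surface$ whose boundary is non-trivial because $n \geq 1$ punctures lie in its complement; hence $[A_1, B_1]$ is elliptic. In both regimes, Lemma~\ref{lem:commutator-hyperbolic} forces $[A_i, B_i] = 1$.

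I would then split into two cases. If some $A_i$ or $B_i$ is non-trivial, I may assume (after reindexing and possibly swapping the roles of $a$- and $b$-generators) that $A_1 \neq 1$, and let $K \cong \PSO(2)$ denote its centralizer. Since $[A_1, B_1] = 1$, also $B_1 \in K$. For each puncture index $j$, the simple closed curve $a_1^{-1}[a_1, b_1] c_j^{-1} a_1 c_j$ from~\eqref{eq:scc-one-puncture} has image $A_1^{-1} C_j^{-1} A_1 C_j = [A_1^{-1}, C_j^{-1}]$ (using $[A_1, B_1] = 1$), which Lemma~\ref{lem:commutator-hyperbolic} forces to be trivial; hence $C_j \in K$. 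When $g \geq 2$, the simple closed curves $[b_1^{-1} b_{i'}^{-1}, a_1]$ and $[b_{i'} a_{i'}^{-1} b_1, a_1]$ from Lemma~\ref{lem:list-of-scc-for-genus} produce, by the same mechanism, $B_{i'} \in K$ and then $A_{i'} \in K$ for every $i' \neq 1$, so the entire image lies in $K$.

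The subtle case is when all $A_i$ and $B_i$ are trivial. The defining relation then reduces to $C_1 \cdots C_n = 1$; this contradicts reducedness when $n = 1$, while for $n \geq 2$ the simple closed curve~\eqref{eq:scc-two-punctures} with $i = 1$ and any indices $j > k$ has $\rho$-image equal to $[C_k, C_j]$, forced to be trivial by Lemma~\ref{lem:commutator-hyperbolic}. The $C_j$'s then pairwise commute and, being non-trivial elliptic, share a unique fixed point, so the whole image lies in a common conjugate of $\PSO(2)$. The hard part is exactly this last case: even though the handle generators map to the identity, one still needs to leverage the topology of $\surface$ to compare the rotation axes of the $C_j$'s, and the key observation is that the curves in~\eqref{eq:scc-two-punctures} remain simple for purely topological reasons while their $\rho$-images collapse to pure commutators of the peripheral images once $A_1 = B_1 = 1$.
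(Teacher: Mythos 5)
Your proposal is correct and follows essentially the same route as the paper: establish $[A_i,B_i]=1$ for all $i$, then split on whether some $A_i$ or $B_i$ is nontrivial, using the simple closed curves~\eqref{eq:scc-one-puncture} in the first case and~\eqref{eq:scc-two-punctures} in the second to force the $C_j$ into a common $\PSO(2)$ via Lemma~\ref{lem:commutator-hyperbolic}. The only cosmetic difference is that you inline the argument showing all $A_i,B_i$ lie in one $\PSO(2)$ (via Lemma~\ref{lem:list-of-scc-for-genus} and the $g=1$ regular-neighborhood observation), whereas the paper refers to Proposition~\ref{prop:orthogonal-close-surfaces} as a shorthand for that same computation; your version is arguably cleaner since that proposition as stated concerns closed surfaces.
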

\begin{proof}
As before, we shall write $A_i=\rho(a_i)$ and $B_i=\rho(b_i)$, as well as $C_j=\rho(c_j)$. Proposition~\ref{prop:orthogonal-close-surfaces} implies that all the $A_i$ and $B_i$ are contained in the same orthogonal subgroup of $\psl$, which we can assume to be $\PSO(2)$ up to conjugating $\rho$. In particular, $[A_i,B_i]=1$ for every index $i$.

Let us first assume that there exists an index $i\in\{1,\ldots,g\}$ such that $A_i\neq 1$ or $B_i\neq 1$. We will consider the case where $A_i\neq 1$ (similar arguments apply when $B_i\neq 1$). Using the simple closed curves~\eqref{eq:scc-one-puncture} from Lemma~\ref{lem:list-of-scc-for-genus-and-punctures} and since $\rho$ is totally elliptic, we conclude that $A_i^{-1}[A_i,B_i]C_j^{-1}A_iC_j$ is elliptic for every index $j\in\{1,\ldots,n\}$. Because $[A_i,B_i]=1$, we even have that $[A_i^{-1},C_j^{-1}]$ is elliptic. As both $A_i$ and $C_j$ are elliptic by total ellipticity of $\rho$, Lemma~\ref{lem:commutator-hyperbolic} implies that $[A_i,C_j]=1$ and thus $C_j\in\PSO(2)$ because $A_i\neq 1$. This holds for every index $j$ and therefore the image of $\rho$ is contained in $\PSO(2)$.

Now, let us consider the case where $A_i=B_i=1$ for every $i\in\{1,\ldots,g\}$. Since we are assuming $\rho$ to be reduced, we must have $n\geq 2$. Using the simple closed curve~\eqref{eq:scc-two-punctures} from Lemma~\ref{lem:list-of-scc-for-genus-and-punctures}, we deduce that $B_iC_kB_i^{-1}C_jB_iA_i^{-1}B_i^{-1}C_k^{-1}B_i^{-1}C_j^{-1}$ is elliptic for every index $i\in\{1,\ldots,g\}$ and every $j>k$ in $\{1,\ldots,n\}$. Since $A_i=B_i=1$, we obtain $[C_k,C_j]=1$ for every $j>k$ in $\{1,\ldots,n\}$. This implies that $\rho$ is orthogonal for the same reasons as before.
\end{proof}

Propositions~\ref{prop:orthogonal-close-surfaces} and~\ref{prop:orthogonal-punctured-surfaces} say that when $\surface$ has positive genus and an arbitrary number of punctures, then every reduced totally elliptic representation $\pi_1\surface\to\psl$ is orthogonal. In order to finish the proof of Theorem~\ref{thm-intro}, we shall now prove that if the genus of $\surface$ is zero, then every reduced totally elliptic surface group representation $\rho\colon\pi_1\surface\to\psl$ is either orthogonal or a DT representation.

\subsection{Genus zero}\label{sec:genus-zero}
First, note that if $\surface$ has at most two punctures, then every totally elliptic surface group representation $\rho\colon\pi_1\surface\to\psl$ is orthogonal. So we will assume that $\surface$ is a sphere with a set $\mathcal{P}$ of $n\geq 3$ punctures. 

For every angle vector $\alpha\in (0,2\pi)^{\mathcal{P}}$, the subspace of the character variety $\Rep(\surface,\psl)$ consisting of all conjugacy classes of representations mapping every peripheral curve around a puncture $p\in\mathcal{P}$ to regular elliptic elements of fixed rotation angle $\alpha_p\in (0,2\pi)$ is called \emph{$\alpha$-relative character variety} and is denoted by
\[
\Rep_\alpha(\surface,\psl)\subset \Rep(\surface,\psl).
\]
The numbers~$\alpha_p$ are called \emph{peripheral angles} and their sum is written
\[
|\alpha|=\sum_{p\in\mathcal{P}}\alpha_p.
\]

By definition, a reduced totally elliptic representation $\rho\colon\pi_1\surface\to\psl$ maps every peripheral curve on~$\surface$ to a regular elliptic element of $\psl$. The images under~$\rho$ of all peripheral curves around a puncture $p\in\mathcal{P}$ are conjugate and thus have the same rotation angle $\alpha_p\in (0,2\pi)$. In other words, the conjugacy class $[\rho]$ of $\rho$ belongs to the \emph{$\alpha$-relative character variety} where $\alpha$ is the vector of peripheral angles $\alpha_p$.

In contrast to the positive genus case (see Propositions~\ref{prop:orthogonal-close-surfaces} and~\ref{prop:orthogonal-punctured-surfaces}), there exist totally elliptic genus-0 surface group representations with Zariski dense image in $\psl$. The first examples were discovered via computer simulations by Benedetto--Goldman in the case $n=4$. Their construction was later generalized by Deroin--Tholozan for spheres with an arbitrary number of punctures, leading to the following combined statement.
\begin{thm}[\cite{benedetto-goldman, deroin-tholozan}]\label{thm:DT}
If $|\alpha|<2\pi$ or $|\alpha|>2\pi (n-1)$, then $\Rep_\alpha(\surface,\psl)$ contains a compact connected component isomorphic to $\mathbb{CP}^{n-3}$ and entirely made of conjugacy classes of totally elliptic representations with  Zariski dense image. More precisely, these representations map every simple closed curves on $\surface $ to a \emph{regular} elliptic element.
\end{thm}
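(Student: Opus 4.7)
The plan is to prove this theorem by analyzing a relative Euler number on $\Rep_\alpha(\surface, \psl)$, showing its extremal locus forms a union of compact components, and identifying that locus geometrically with $\CP^{n-3}$.

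First, I would introduce a relative Euler number $e_\alpha\colon \Rep_\alpha(\surface, \psl) \to \R$ by lifting each peripheral image uniquely to the universal cover of $\psl$ via its prescribed rotation angle $\alpha_p$ and measuring the central obstruction to lifting $\rho$ itself. A relative Milnor–Wood inequality bounds $e_\alpha$ to a finite range, and its attainable values form a discrete coset of $\Z$ in $\R$; a direct calculation shows the extremal (\emph{supra-maximal}) values are attained precisely when $|\alpha| < 2\pi$ or $|\alpha| > 2\pi(n-1)$. By integrality and continuity, the extremal locus $M_\alpha$ is open and closed in $\Rep_\alpha(\surface, \psl)$, hence a union of connected components.

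Second, I would give a geometric model for $M_\alpha$, focusing on the case $|\alpha| < 2\pi$. Writing each peripheral image as a rotation $R_{\alpha_{p_i}}(x_i)$ about a point $x_i \in \HH$, the relation $\prod_i R_{\alpha_{p_i}}(x_i) = 1$ becomes a closing condition for a chain of hyperbolic triangles sharing a common vertex. Supra-maximality forces the $n$ fixed points to lie in convex position, forming a convex hyperbolic polygon whose exterior angles are determined by $\alpha$; the remaining freedom, after quotienting by the diagonal $\psl$-action, is carried by $n - 3$ internal diagonal lengths. Endowing $\Rep_\alpha(\surface, \psl)$ with Goldman's symplectic form, one recognizes the map $\rho \mapsto (x_1, \ldots, x_n)$ as a moment map for a natural torus action and identifies the symplectic reduction with $\CP^{n-3}$ via a Duistermaat–Heckman or Kapovich–Millson type argument. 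Compactness of $M_\alpha$ is then immediate from this description.

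Third, the fact that every simple closed curve, not just the peripheral ones, is mapped to a \emph{regular} elliptic element follows from an additivity argument. Cutting $\surface$ along such a curve $\gamma$ decomposes it into two punctured spheres $\surface_1$ and $\surface_2$, each carrying an induced representation; additivity of the Euler number together with the Milnor–Wood bound on each piece forces both pieces to be supra-maximal, which in turn forces $\rho(\gamma)$ to be regular elliptic with a compatible rotation angle. Zariski density is automatic, since any representation whose image lies in a proper closed subgroup of $\psl$ is abelian or solvable and cannot attain the extremal Euler number. The main obstacle, in my view, is the rigorous identification of $M_\alpha$ with $\CP^{n-3}$: the polygon picture is geometrically natural, but establishing both the convexity of the fixed-point configuration and the global symplectomorphism with $\CP^{n-3}$ requires a careful analysis of the moment map image and the associated Duistermaat–Heckman polytope, which is the technical core of the Deroin–Tholozan argument.
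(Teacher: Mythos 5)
The paper does not prove Theorem~\ref{thm:DT}; it is stated as an imported result and attributed to \cite{benedetto-goldman, deroin-tholozan}, so there is no ``paper's own proof'' to compare against. Your sketch does, however, capture the main architecture of Deroin--Tholozan's argument, and I'll comment on it on its own terms.

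Your three stages are the right ones. The relative Euler/Toledo number with the relative Milnor--Wood bound is exactly how the supra-maximal locus is isolated as a union of components, and the additivity-upon-cutting argument for why every simple closed curve goes to a \emph{regular} elliptic is precisely the content of Lemma~3.2 of \cite{deroin-tholozan} (every simple closed curve on a sphere separates, so the decomposition into two punctured subspheres, plus Milnor--Wood on each piece, rules out hyperbolic, parabolic and trivial images). The identification of the supra-maximal component with $\CP^{n-3}$ is also obtained toric-symplectically, but I would flag two imprecisions in your second stage. First, the Hamiltonian torus action used is the one generated by the bending (twist) flows along a pants decomposition, with moment map the tuple of rotation angles of the $n-3$ pants curves --- not the map $\rho\mapsto(x_1,\dots,x_n)$ recording fixed points, which is neither $\psl$-invariant nor a moment map; the Delzant-type conclusion is then that a compact toric manifold whose moment image is a simplex is $\CP^{n-3}$. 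Second, supra-maximality does \emph{not} force the fixed points $x_1,\dots,x_n$ into convex position; what it forces is that the chain of hyperbolic triangles built on a pants decomposition (the picture developed in \cite{action-angle} and recalled in Section~\ref{sec:n-geq-5} of the present paper) has all its triangles coherently oriented, and such a coherently oriented chain need not bound a convex polygon. Your instinct that this geometric identification is ``the technical core'' is correct, but the correct invariant to track is coherent orientation of the triangle chain, not convexity of a polygon. Finally, the Zariski-density claim at the end needs a bit more than ``proper subgroups are abelian or solvable'': one should observe that a representation into a proper algebraic subgroup (elementary, i.e.\ fixing a point in $\HH\cup\partial\HH$ or a geodesic) has zero Toledo number, whereas the supra-maximal Toledo number in~\eqref{eq:toledo} is nonzero whenever $|\alpha|\notin 2\pi\Z$.
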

\begin{defn}\label{def:DT-representation}
A representation whose conjugacy class belongs to one of the compact components of Theorem~\ref{thm:DT} will be called a \emph{DT representation}. The components themselves will be referred to as \emph{DT components}. 
\end{defn}
According to the last statement of Theorem~\ref{thm:DT}, DT representations are more than just totally elliptic in the sense of Definition~\ref{def:totally-elliptic}; they are ``regularly totally elliptic'' by which we mean that every simple closed curve is mapped to a \emph{regular} elliptic element of $\psl$.

DT representations can be characterized by their Toledo number in the sense of Burger--Iozzi--Wienhard~\cite{BIW}. Namely, according to Deroin--Tholozan's original definition, a point in $\Rep_\alpha(\surface,\psl)$ belongs to a DT component if and only if its Toledo number is
\begin{equation}\label{eq:toledo}
\left\{\begin{array}{ll}    
1-\frac{|\alpha|}{2\pi},\quad &\text{when $|\alpha|<2\pi$},\\
(n-1)-\frac{|\alpha|}{2\pi},\quad &\text{when $|\alpha|>2\pi (n-1)$}.
\end{array}
\right.
\end{equation}
It is worth noticing that the Toledo number of a DT representation is always contained in $(-1,1)\setminus \{0\}$ and can be arbitrarily close to $0$ when $\vert\alpha\vert$ approaches $2\pi$ or $2\pi(n-1)$. Since the absolute value of the Toledo number on $\Rep(\surface,\psl)$ can be as high as $-\chi(\surface)=n-2$ according to~\cite{BIW}, DT representations exhibit nearly minimal Toledo number among all points of $\Rep(\surface,\psl)$. This highlights another strong contrast with Fuchsian representations which have maximal Toledo (or Euler) number.

The topology of relative $\psl$ character varieties has been studied by Mondello. Among other things, he characterized all their compact components, proving that the only compact connected components that are not isolated points are DT components.

\begin{cor}[{\cite[Corollary~4.17]{mondello}}]\label{cor:mondello}
The $\alpha$-relative character variety $\Rep_\alpha(\surface,\psl)$ contains at most one compact connected component and it does precisely in the following cases:
\begin{itemize}
\item $|\alpha|\in 2\pi\Z$ and the compact component is an isolated point corresponding to an orthogonal representation.
\item $|\alpha|\in (0,2\pi)\cup (2\pi(n-1),2\pi n)$ and the compact component is one of the DT components from Theorem~\ref{thm:DT}.
\end{itemize}
\end{cor}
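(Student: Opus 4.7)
The plan is to combine Theorem~\ref{thm-intro} with a deformation-theoretic argument to show that any compact component must consist entirely of totally elliptic representations, and then apply Theorems~\ref{thm-intro} and~\ref{thm:DT} to identify it.

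First I would establish that every conjugacy class in a compact connected component $C\subset \Rep_\alpha(\surface,\psl)$ is represented by a totally elliptic representation. Suppose for contradiction that some $[\rho]\in C$ sends a simple closed curve $\gamma$ to a non-elliptic element. If $\rho(\gamma)$ is hyperbolic, the Goldman twist flow along $\gamma$ produces a $1$-parameter family of conjugacy classes in $\Rep_\alpha(\surface,\psl)$ along which the translation length of $\rho(\gamma)$ is constant but a transverse Fenchel--Nielsen-type coordinate tends to infinity, violating compactness of $C$. The parabolic case is then ruled out by continuity, since parabolics are limits in the character variety of both hyperbolics and elliptics, and a component consisting of non-elliptic representations would admit nearby hyperbolic deformations. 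This forces every simple closed curve to have elliptic image, so that $\rho$ is totally elliptic.

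Next I would invoke Theorem~\ref{thm-intro}: since representations in $\Rep_\alpha(\surface,\psl)$ are reduced (as $\alpha_p\in(0,2\pi)$), every $[\rho]\in C$ is either orthogonal or a DT representation. If $C$ contains a DT representation, then by Theorem~\ref{thm:DT} it coincides with one of the DT components, which requires $|\alpha|\in(0,2\pi)\cup(2\pi(n-1),2\pi n)$, giving the second bullet. If instead every $[\rho]\in C$ is orthogonal, then $\rho$ has image in a conjugate of $\PSO(2)$; the relation $\prod_{j=1}^n c_j=1$ in $\pi_1\surface$ combined with the fact that $\rho(c_j)$ is a rotation of angle $\alpha_j$ (with a sign determined by orientation) forces $|\alpha|\in 2\pi\Z$. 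Conversely, under this condition an orthogonal representation with prescribed peripheral angles is unique up to conjugation, so the orthogonal locus contracts to the single point described in the first bullet.

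Finally, uniqueness and disjointness of the two cases is immediate: the intervals $(0,2\pi)$ and $(2\pi(n-1),2\pi n)$ are disjoint from $2\pi\Z$ whenever $n\geq 3$, and within each regime Theorem~\ref{thm:DT} (for DT) and the rigidity argument above (for orthogonal) each produce a single compact component. The main obstacle is the first step: ruling out hyperbolic and parabolic images of simple closed curves inside a compact component. The hyperbolic case relies on the explicit twist deformation, which is clean away from reducible representations, but handling reducible loci and the parabolic boundary in a uniform way requires some care — this is essentially where Mondello's analytic input via the non-abelian Hodge correspondence enters to make the argument rigorous across the whole component.
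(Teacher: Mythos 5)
The paper does not actually prove this statement; it cites it directly from Mondello~\cite[Corollary~4.17]{mondello}, whose argument uses the non-abelian Hodge correspondence to analyze the topology of relative character varieties. Your proposal instead attempts to reprove it internally, and this runs into a circularity: your plan invokes Theorem~\ref{thm-intro}, but the proof of Theorem~\ref{thm-intro} in the base case $n=4$ (Section~\ref{sec:n=4}) explicitly uses Corollary~\ref{cor:mondello} to identify the isolated-point case among finite mapping class group orbits. So as structured, your argument assumes the very statement it is trying to establish.

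Beyond the circularity, the first step of your proposal---that any compact connected component of $\Rep_\alpha(\surface,\psl)$ must consist entirely of totally elliptic representations---is exactly the hard content of Mondello's theorem, and the sketch you give does not establish it. The Goldman twist flow along a simple closed curve with hyperbolic image is a reasonable heuristic, but it is not automatic that the twist orbit escapes every compact set (one needs control near the reducible locus and near the boundary strata where images degenerate to parabolics), and the one-line continuity argument you offer for the parabolic case is not a proof. You acknowledge this yourself at the end, noting that ``Mondello's analytic input via the non-abelian Hodge correspondence'' is what makes the argument rigorous---which means the proposal does not contain an actual proof of the key step. The remaining steps (identifying the orthogonal locus when $|\alpha|\in 2\pi\Z$, and invoking Theorem~\ref{thm:DT} when $|\alpha|$ lies in the outer intervals) are fine but secondary, and they would be sound only after both the circularity and the compactness-implies-ellipticity gap were resolved.
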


With Corollary~\ref{cor:mondello} in mind and in order to conclude the proof of Theorem~\ref{thm-intro}, we shall show that if an element $[\rho]\in \Rep_\alpha(\surface,\psl)$ is the conjugacy class of a reduced totally elliptic representation, then $\alpha$ is one of the angle vectors from Corollary~\ref{cor:mondello} and $[\rho]$ belongs to the unique compact connected component of $\Rep_\alpha(\surface,\psl)$. We will use a result of Deroin--Tholozan who observed that conjugacy classes of totally elliptic representations remain in a bounded region of the character variety.
\begin{prop}[{\cite[Proposition~2.5]{deroin-tholozan}}]\label{prop:totally-elliptic-bounded}
The subset of all conjugacy classes of totally elliptic representations inside the character variety $\Rep(\surface,\psl)$ is bounded (i.e.~contained in a compact subset).
\end{prop}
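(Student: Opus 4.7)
The plan is to use a finite collection of simple closed curves whose trace-squared invariants embed the character variety properly into Euclidean space, and then observe that total ellipticity forces these invariants to lie in a bounded region. As a first step, I would note that any $g\in\psl$ has two $\mathrm{SL}_2\R$-lifts $\pm\tilde g$, so the function $\mathrm{tr}^2(\tilde g)$ is well-defined, continuous, and conjugation-invariant on $\psl$. Crucially, if an element of $\psl$ is elliptic then $\mathrm{tr}^2\in[0,4]$. Consequently, for every simple closed curve $\gamma$ and every totally elliptic representation $\rho$, the value $\mathrm{tr}^2(\rho(\gamma))$ belongs to $[0,4]$.

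The core of the proof is then to exhibit a finite list of words $w_1,\ldots,w_N$ in the generators of $\pi_1\surface$ with two properties: (i) each $w_i$ represents a simple closed curve on $\surface$; and (ii) the trace-squared map
\[
[\rho]\longmapsto\bigl(\mathrm{tr}^2(\rho(w_1)),\ldots,\mathrm{tr}^2(\rho(w_N))\bigr)\in\R^N
\]
is proper on $\Rep(\surface,\psl)$. Property (ii) is a consequence of the classical Fricke--Klein--Vogt description of $\mathrm{SL}_2$-character varieties, which says that finitely many trace functions generate the coordinate ring of the GIT quotient and separate semisimple conjugacy classes. For property (i), I would start from the geometric generators $a_1,b_1,\ldots,a_g,b_g,c_1,\ldots,c_n$ of Figure~\ref{fig:geometric-generators} (each already a simple closed curve on $\surface$), and enlarge the family by adjoining suitable products---for instance $a_ib_i$, the consecutive peripheral products $c_ic_{i+1}\cdots c_j$ on a punctured sphere, or the mixed products appearing in Lemmas~\ref{lem:list-of-scc-for-genus} and~\ref{lem:list-of-scc-for-genus-and-punctures}---each of which can be drawn as a simple closed curve on $\surface$. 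Standard $\mathrm{SL}_2$ trace identities reduce all generators of the coordinate ring to words of length at most three in the chosen generators, so only finitely many products are needed.

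With both properties in hand, the conclusion is immediate: if $\rho$ is totally elliptic, every $\mathrm{tr}^2(\rho(w_i))$ lies in $[0,4]$, so $[\rho]$ is confined to the preimage of the cube $[0,4]^N$, which by properness is a compact subset of $\Rep(\surface,\psl)$. The main obstacle is property (i)---assembling a trace-generating family entirely out of simple closed curves. This is clean on a punctured sphere, where $\pi_1\surface$ is free and the subset products $c_S=\prod_{i\in S}c_i$ (in increasing order) already realize all cyclically separating simple curves, but it requires more combinatorial care on higher-genus surfaces.
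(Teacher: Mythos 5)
Your proposal takes a genuinely different route from the paper, which instead uses the Gu\'eritaud--Kassel domination theorem: every representation mapping simple closed curves to elliptic (or parabolic) elements is dominated by a fixed Fuchsian representation via a $1$-Lipschitz equivariant map $\HH\to\HH$, and Arzel\`a--Ascoli then gives sequential compactness of this set modulo conjugation. That argument is short and geometric, and it is what \cite{deroin-tholozan} actually do.

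Your proposed argument has a genuine gap at its core, namely property~(ii) --- that a finite list of \emph{simple} closed curve trace functions gives a proper map on $\Rep(\surface,\psl)$. The justifications you offer do not establish it. The Fricke--Klein--Vogt reduction says that traces of words of length at most three in a generating set generate the coordinate ring, but these words are typically \emph{not} simple closed curves. On a punctured sphere your claim that the subset products $c_S=\prod_{i\in S}c_i$ realize all separating simple curves is false: $c_S$ is simple only when $S$ is a cyclic interval. Already on the $4$-punctured sphere, $c_1c_3$ is \emph{not} freely homotopic to a simple closed curve --- the simple curve separating $\{1,3\}$ from $\{2,4\}$ is represented by $c_1c_2c_3c_2^{-1}$, a different conjugacy class --- and $\mathrm{tr}(\rho(c_1c_3))$ is one of the three standard Fricke coordinates, so a priori it must be controlled. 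The fact that simple closed curve traces \emph{do} generate the coordinate ring of the $\mathrm{SL}_2\C$-character variety is true, but it is a non-trivial theorem coming from skein-algebra theory (the Kauffman bracket skein algebra of $\surface$ is generated by simple multicurves, hence by simple closed curves, and specializes to the coordinate ring at $q=\pm 1$; see Bullock and Przytycki--Sikora). Without invoking something like this, you cannot conclude that your finite simple family gives a proper map. Finally, even granting this, you would still need to pass from properness on the (Hausdorff, affine) GIT quotient to boundedness in the non-Hausdorff topological quotient $\Hom(\pi_1\surface,\psl)/\psl$ that the paper actually works with, which requires showing that representations with bounded trace coordinates admit conjugates lying in a fixed compact subset of $\Hom(\pi_1\surface,\psl)$ --- a step you do not address.
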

Deroin--Tholozan's argument to prove Proposition~\ref{prop:totally-elliptic-bounded} goes as follows. First, consider the closure of the subset of totally elliptic representations inside $\Hom(\pi_1\surface, \psl)$ and call it $\overline{\mathcal{TE}}$. The set $\overline{\mathcal{TE}}$ is the collection of all representations in $\Hom(\pi_1\surface, \psl)$ that map every simple closed curve on $\surface$ to either an elliptic or a parabolic element of $\psl$. All the representations in $\overline{\mathcal{TE}}$ are dominated (in the sense of~\cite{GKW, DT-domination}) by every Fuchsian representation in $\Hom(\pi_1\surface, \psl)$ by a result of Guéritaud--Kassel~\cite{GK}. So, if we fix an arbitrary Fuchsian representation $j\colon\pi_1\surface\to\psl$, it will dominate every representation in a sequence $(\rho_k)\subset \overline{\mathcal{TE}}$. Domination means that to every $\rho_k$ corresponds a $(j,\rho_k)$-equivariant 1-Lipschitz map $\HH\to\HH$. The existence of a converging subseqeunce for $(\rho_k)$ now follows from the Arzelà--Ascoli theorem (see~\cite{deroin-tholozan} for more details). This argument shows that $\overline{\mathcal{TE}}\subset \Hom(\pi_1\surface, \psl)$ is sequentially compact, hence compact because $\Hom(\pi_1\surface, \psl)$ is a metrizable space. Now, if $p$ denotes the quotient map from $\Hom(\pi_1\surface, \psl)$ to the non-Hausdorff quotient $\Rep(\surface,\psl)$, then the subset of all conjugacy classes of totally elliptic representations in $\Rep(\surface,\psl)$ is contained in the compact set~$p(\overline{\mathcal{TE}})$.

Even though totally elliptic representations remain in a bounded region by Proposition~\ref{prop:totally-elliptic-bounded}, it could nevertheless be possible that some non-compact components of $\Rep_\alpha(\surface,\psl)$ contain conjugacy classes of totally elliptic representations. To conclude the proof of Theorem~\ref{thm-intro}, we shall prove that this is not the case. We will do so using an induction on the number $n$ of punctures of $\surface$.

\subsubsection{The case $n=3$}\label{sec:n=3}
A $3$-punctured sphere $\surface$ only contains three non-trivial simple closed curves: its three peripheral curves. Any point in an $\alpha$-relative character variety $\Rep_\alpha(\surface,\psl)$ is thus automatically the conjugacy class of a totally elliptic representation. It turns out that when $n=3$, an $\alpha$-relative character variety $\Rep_\alpha(\surface,\psl)$ is either empty or a singleton. More precisely, let us pick a geometric presentation of $\pi_1\surface$ with generators $c_1,c_2,c_3$ satisfying $c_1c_2c_3=1$, similarly as on Figure~\ref{fig:geometric-generators}. Now, if $\rho$ is a reduced totally elliptic representation, then the unique fixed points of $\rho(c_1)$, $\rho(c_2)$, $\rho(c_3)$ form a triangle in the hyperbolic plane with vertices $(C_1,C_2,C_3)$. Denote the rotation angles of $\rho(c_1)$, $\rho(c_2)$, $\rho(c_3)$ by $\alpha=(\alpha_1,\alpha_2,\alpha_3)\in (0,2\pi)^3$. When we study the location of the points $(C_1,C_2,C_3)$, we observe that the triangle they form can only be in three possible configurations, as explained in~\cite[Section~3.1 and Table~1]{action-angle}.
\begin{itemize}
\item $|\alpha|\in\{2\pi,4\pi\}$ and the triangle is degenerate with $C_1=C_2=C_3$.
\item $|\alpha|<2\pi$ and the triangle $(C_1,C_2,C_3)$ is anti-clockwise oriented with interior angles given by $(\alpha_1/2,\alpha_2/2,\alpha_3/2)$.
\item $|\alpha|>4\pi$ and the triangle $(C_1,C_2,C_3)$ is clockwise oriented with interior angles given by $(\pi-\alpha_1/2,\pi-\alpha_2/2,\pi-\alpha_3/2)$.
\end{itemize}
If $|\alpha|\in\{2\pi,4\pi\}$, then $\rho$ is orthogonal. When $|\alpha|<2\pi$ or when $|\alpha|>4\pi$, then $\rho$ is a DT representation. Finally, if $|\alpha|\in (2\pi, 4\pi)$, then $\Rep_\alpha(\surface,\psl)$ is empty. Altogether, this proves Theorem~\ref{thm-intro} for 3-punctured spheres.

\subsubsection{The case $n=4$}\label{sec:n=4}
Throughout this section, $\surface$ will denote a sphere with four punctures. We start by considering the case of representations with a non-peripheral simple closed curve in their kernel.
\begin{lem}\label{lem:scc-in-kernel-n=4}
If $\rho\colon\pi_1\surface\to\psl$ is a totally elliptic representation with a non-peripheral simple closed curve in its kernel, then $\rho$ is orthogonal.
\end{lem}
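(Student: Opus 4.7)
The plan is to show that $A := \rho(c_1)$ and $B := \rho(c_3)$ commute in $\psl$, which forces the image of $\rho$ into a conjugate of $\PSO(2)$ and hence $\rho$ to be orthogonal. Up to relabelling the four punctures, we may assume that the non-peripheral simple closed curve $\gamma \in \ker\rho$ is freely homotopic to $c_1 c_2$, separating $\{1,2\}$ from $\{3,4\}$. Combining $\rho(\gamma) = 1$ with the surface group relation $c_1 c_2 c_3 c_4 = 1$ yields $\rho(c_2) = A^{-1}$ and $\rho(c_4) = B^{-1}$. If either $A$ or $B$ equals the identity, then $\rho$ factors through the two-punctured sphere obtained by filling in the corresponding pair of trivially-mapped punctures; its image is cyclic elliptic and therefore contained in a conjugate of $\PSO(2)$, making $\rho$ orthogonal. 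From now on we assume that both $A$ and $B$ are regular elliptic.

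The key step is to exhibit a simple closed curve $\delta$ on $\surface$ whose image $\rho(\delta)$ equals the commutator $[A^{-1},B] = A^{-1}BAB^{-1}$. Let $\gamma'$ denote the non-peripheral simple closed curve freely homotopic to $c_2 c_3$; since $i(\gamma,\gamma') = 2$, the Dehn twist $\delta := T_{\gamma'}(\gamma)$ is again a simple closed curve on $\surface$. A direct computation of the Dehn twist action at the level of $\pi_1 \surface$, using the concatenation convention of Remark~\ref{rem:concatenation-convention}, yields the explicit representative $\delta = c_2 c_3 c_1 c_3^{-1}$, and consequently
\[
\rho(\delta) \;=\; \rho(c_2)\rho(c_3)\rho(c_1)\rho(c_3)^{-1} \;=\; A^{-1}BAB^{-1} \;=\; [A^{-1},B].
\]

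Total ellipticity then forces $\rho(\delta) = [A^{-1},B]$ to be elliptic in $\psl$. On the other hand, Lemma~\ref{lem:commutator-hyperbolic} applied to the regular elliptic element $A^{-1}$ asserts that $[A^{-1},B]$ is either trivial or hyperbolic; hyperbolicity would contradict being elliptic, so $[A^{-1},B] = 1$. Therefore $B$ fixes the unique fixed point of $A$, and both $A$ and $B$ (together with $\rho(c_2) = A^{-1}$ and $\rho(c_4) = B^{-1}$) lie in a common conjugate of $\PSO(2)$. The image of $\rho$ is contained in this compact subgroup, so $\rho$ is orthogonal.

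The principal obstacle is the topological identification of the Dehn twist $T_{\gamma'}(\gamma)$ with the $\pi_1 \surface$-element $c_2 c_3 c_1 c_3^{-1}$; this is checked by drawing $\gamma$ and $\gamma'$ on a picture of the four-punctured sphere and reading off the twisted representative directly, in the same spirit as the calculations in the proofs of Lemma~\ref{lem:list-of-scc-for-genus} and Lemma~\ref{lem:list-of-scc-for-genus-and-punctures}.
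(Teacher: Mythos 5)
Your proof is correct and takes essentially the same approach as the paper: reduce to the case where $c_1c_2$ lies in the kernel, observe that twisting along $c_2c_3$ produces a new simple closed curve whose image under $\rho$ is a commutator of elliptic elements, and then invoke Lemma~\ref{lem:commutator-hyperbolic} to force that commutator to vanish. Your representative $c_2c_3c_1c_3^{-1}$ is a $\pi_1$-conjugate (by $c_2c_3$) of the paper's $c_1c_3^{-1}c_2c_3$, so the two are interchangeable, and your explicit handling of the degenerate case $A=1$ or $B=1$ is a small clarification of what the paper leaves implicit.
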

\begin{proof}
We work with a geometric presentation of $\pi_1\surface$ with generators $c_1,c_2,c_3,c_4$ satisfying $c_1c_2c_3c_4=1$. Up to cyclically permuting the generators, we may assume that $c_1c_2$ is a fundamental group element that represents the non-peripheral simple closed curve in the kernel of $\rho$. This means that there exist two elements $X,Y$ of $\psl$ such that $\rho(c_1)=X^{-1}$, $\rho(c_2)=X$, $\rho(c_3)=Y$, and $\rho(c_4)=Y^{-1}$. 

We now claim that the fundamental group element $c_1c_3^{-1}c_2c_3$ represents a simple closed curve on $\surface$. This can be seen directly on Figure~\ref{fig:geometric-generators} by applying the concatenation conventions of Remark~\ref{rem:concatenation-convention}. Alternatively, apply a Dehn twist along the simple closed curve represented by $c_2c_3$ to the simple closed curve represented by $c_1c_2$. The resulting closed curve is simple and represented by  $c_1c_3^{-1}c_2c_3$. 

Finally, using that $\rho$ is totally elliptic and that  $c_1c_3^{-1}c_2c_3$ represents a simple closed curve, we conclude that $\rho(c_1c_3^{-1}c_2c_3)=[X^{-1},Y^{-1}]$ is elliptic. By Lemma~\ref{lem:commutator-hyperbolic}, this is possible only if $[X^{-1},Y^{-1}]=1$, proving that $\rho$ is orthogonal.
\end{proof}

Having proved Lemma~\ref{lem:scc-in-kernel-n=4}, we will now consider reduced totally elliptic representations $\rho\colon\pi_1\surface\to\psl$ with no simple closed curve in their kernel. In the terminology introduced after Definition~\ref{def:DT-representation}, they are regularly totally elliptic. In particular, each conjugacy class $[\rho]$ belongs to an $\alpha$-relative character variety $\Rep_\alpha(\surface,\psl)$ for some angle vector $\alpha\in(0,2\pi)^4$. We want to prove that every such $\rho$ is either a DT or an orthogonal representation. 

Let us start with the case $|\alpha|<2\pi$ or $|\alpha|>6\pi$ which was already treated in~\cite{maret-ergodicity}. It turns that mapping a single non-peripheral closed curve to an elliptic element is enough to be a DT representation in that case.
\begin{lem}[{\cite[Remark~2.8]{maret-ergodicity}}]\label{lem:4-punctured-small-big-angles}
Let $\alpha\in (0,2\pi)^4$ be an angle vector such that $|\alpha|<2\pi$ or $|\alpha|>6\pi$ and let $[\rho]\in \Rep_\alpha(\surface,\psl)$. If $\rho$ maps any non-peripheral simple closed curve on $\surface$ to a regular elliptic element, then $\rho$ is a DT representation. In particular, every totally elliptic representation is a DT representation.
\end{lem}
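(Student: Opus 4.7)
The plan is to pants-decompose $\surface$ along the given non-peripheral simple closed curve $\gamma$, apply the $n=3$ classification of Section~\ref{sec:n=3} to each of the resulting pants, and then identify the conjugacy class $[\rho]$ as a DT representation using Mondello's Corollary~\ref{cor:mondello}.

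To set up the decomposition, I would pick a geometric presentation $c_1, c_2, c_3, c_4$ of $\pi_1\surface$ with $c_1 c_2 c_3 c_4 = 1$ and, after cyclic reordering, assume that $\gamma$ is represented by $c_1 c_2$. Let $\theta \in (0, 2\pi)$ denote the rotation angle of the regular elliptic element $\rho(\gamma)$. Cutting $\surface$ along $\gamma$ produces two pairs of pants $\surface_1, \surface_2$, on which $\rho$ restricts to representations $\rho_1, \rho_2$ sending all three boundary curves to regular elliptic elements, with peripheral angle triples $(\alpha_1, \alpha_2, 2\pi-\theta)$ on $\surface_1$ and $(\alpha_3, \alpha_4, \theta)$ on $\surface_2$.

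Writing $T_i$ for the sum of the three peripheral angles of $\rho_i$, the $n=3$ classification forces $T_i \in (0, 2\pi] \cup [4\pi, 6\pi)$ for each $i$. The identity $T_1 + T_2 = |\alpha| + 2\pi$, combined with the hypothesis $|\alpha|<2\pi$, places $T_1+T_2 \in (2\pi, 4\pi)$, which together with the allowed ranges and a short case analysis forces both $T_i$ to lie in $(0, 2\pi]$; the symmetric argument under $|\alpha|>6\pi$ forces both $T_i$ to lie in $[4\pi, 6\pi)$. In either case the hyperbolic triangles associated to $\rho_1$ and $\rho_2$ via the $n=3$ classification are coherently oriented (both anti-clockwise or both clockwise, possibly degenerating to a point at the boundary values $T_i\in\{2\pi,4\pi\}$). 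Toledo-number additivity across the pants decomposition together with the DT formula~\eqref{eq:toledo} for each $\rho_i$ then yields $\mathrm{Tol}(\rho) = 1-|\alpha|/(2\pi)$ in the first case and $\mathrm{Tol}(\rho) = 3-|\alpha|/(2\pi)$ in the second, matching exactly the Toledo number characterizing the DT component in~\eqref{eq:toledo}.

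To upgrade the Toledo match into an identification of connected components, I would invoke Mondello's Corollary~\ref{cor:mondello}: the DT component is the unique compact component of $\Rep_\alpha(\surface, \psl)$ in these angle ranges. The coherent orientation of the triangle chain coming from the pants decomposition (as developed in~\cite{action-angle}) then realizes $[\rho]$ as a DT representation; alternatively one can observe that the Goldman twist flow along $\gamma$ produces a circle of conjugacy classes through $[\rho]$ preserving both $\rho_1$ and $\rho_2$ up to conjugacy, which stays in the bounded totally-elliptic locus of Proposition~\ref{prop:totally-elliptic-bounded} and hence in a compact, therefore DT, component. The second statement of the lemma (totally elliptic implies DT) then follows at once, using Lemma~\ref{lem:scc-in-kernel-n=4} to rule out the possibility that a non-peripheral simple closed curve is mapped to the identity when $|\alpha|\notin 2\pi\Z$.

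The main obstacle is the final identification step: matching Toledo numbers alone does not a priori suffice to distinguish connected components, and the triangle-chain argument needs care at the degenerate boundary values $T_i\in\{2\pi,4\pi\}$ where one pant's triangle collapses to a point. The cleanest way to handle this is a direct geometric verification that, even in the degenerate case, the two other non-peripheral simple closed curves on $\surface$ are automatically mapped to elements with trace in $(-2,2)$---a short computation using the hyperbolic law of cosines applied to the non-degenerate triangle on the other pant---so that $\rho$ is in fact regularly totally elliptic and therefore lands in the DT component via Corollary~\ref{cor:mondello}.
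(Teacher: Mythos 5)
Your core argument---cut $\surface$ along the elliptic simple closed curve $\gamma$, classify the two pair-of-pants restrictions via the $n=3$ analysis of Section~\ref{sec:n=3}, then compute the Toledo number using additivity across the pants decomposition---is exactly the method the paper attributes to the proof in~\cite{maret-ergodicity}, and your case analysis (the constraint $T_i\in(0,2\pi]\cup[4\pi,6\pi)$ together with $T_1+T_2=|\alpha|+2\pi$) is correct. Note that the paper itself does not reproduce a proof: it defers to~\cite{maret-ergodicity} and merely describes the method as ``the characterization of DT representations in terms of Toledo number (given by~\eqref{eq:toledo}) and the additive properties of the Toledo number.''

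The concern raised in your final paragraphs is, however, unfounded within the paper's framework. Equation~\eqref{eq:toledo} is stated as a biconditional: a point of $\Rep_\alpha(\surface,\psl)$ belongs to a DT component \emph{if and only if} its Toledo number takes the displayed value. So once you have computed $\mathrm{Tol}(\rho)=1-|\alpha|/(2\pi)$ (respectively $3-|\alpha|/(2\pi)$), the identification of $[\rho]$ as a DT representation is immediate, and the excursions through Mondello's Corollary~\ref{cor:mondello}, the Goldman twist flow, and the hyperbolic law of cosines are all superfluous. One small precision: at the boundary values $T_i\in\{2\pi,4\pi\}$ the restriction $\rho_i$ is orthogonal rather than a DT representation of the pair of pants, so invoking ``the DT formula for $\rho_i$'' is not quite right there; but the Toledo number of such an orthogonal pants representation is $0$, which agrees with the limiting value of the formula, so the additivity computation goes through unchanged. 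Your handling of the ``in particular'' clause via Lemma~\ref{lem:scc-in-kernel-n=4} together with the fact that orthogonal representations force $|\alpha|\in 2\pi\Z$ is correct.
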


The proof of Lemma~\ref{lem:4-punctured-small-big-angles} given in~\cite{maret-ergodicity} uses the characterization of DT representations in terms of Toledo number (given by~\eqref{eq:toledo}) and the additive properties of the Toledo number. 

When $|\alpha|\in [2\pi,6\pi]$, the situation is not as simple as in Lemma~\ref{lem:4-punctured-small-big-angles}, because there exist representations that map some non-peripheral simple closed curves on $\surface$ to elliptic elements, but which are not totally elliptic. An explicit example is provided by Palesi in~\cite[Section~5.4]{palesi}. We will nevertheless prove that if $|\alpha|\in [2\pi,6\pi]$ and $[\rho]\in \Rep_\alpha(\surface,\psl)$ is the conjugacy class of a totally elliptic representation, then actually $|\alpha|\in\{2\pi,4\pi,6\pi\}$ and $\rho$ is orthogonal. We will use an argument of dynamical flavour that turns out to also cover the case of Lemma~\ref{lem:4-punctured-small-big-angles}.

The pure mapping class group of $\surface$ acts on $\Rep_\alpha(\surface,\psl)$ for any angle vector $\alpha\in (0,2\pi)^4$ (details can be found in~\cite{maret-ergodicity}). If $\rho$ is totally elliptic, then every point in the pure mapping class group orbit of $[\rho]$ is the conjugacy class of a totally elliptic representation. The orbit of $[\rho]$ is therefore bounded by Proposition~\ref{prop:totally-elliptic-bounded}. There are now two possibilities.

First, the orbit of $[\rho]$ may be finite. It turns out that finite orbits have been classified by Lisovyy--Tykhyy~\cite{LT}. Their classification shows that any finite orbit in an $\alpha$-relative character variety $\Rep_\alpha(\surface,\psl)$ is either an isolated point or belongs to a DT component. Recall from Corollary~\ref{cor:mondello} that $\Rep_\alpha(\surface,\psl)$ contains an isolated point if and only if $|\alpha|\in\{2\pi,4\pi,6\pi\}$, and the isolated point is always the conjugacy class of an orthogonal representation.

Secondly, we shall consider the case where the orbit of $[\rho]$ is an infinite bounded set. Infinite bounded orbits, in the context of 4-punctured spheres, have been studied by Cantat--Loray. A consequence of their work is the following statement.\footnote{There is a small subtlety here. Cantat--Loray study the pure mapping class group action on the real points of the complex GIT quotient of $\Hom(\pi_1\surface, \mathrm{SL}_2\C)$ by $\mathrm{SL}_2\C$ and not on the topological quotient of $\Hom(\pi_1\surface, \psl)$ by $\psl$ like we do. However, since all the representations whose conjugacy class lies in an $\alpha$-relative character variety $\Rep_\alpha(\surface,\psl)$ are reductive, Corollary~\ref{cor:cantat-loray} is indeed a consequence of Cantat--Loray's results.}

\begin{cor}[{\cite[Theorem~C]{cantat-loray}}]\label{cor:cantat-loray}
If an  $\alpha$-relative character variety $\Rep_\alpha(\surface,\psl)$ contains an infinite bounded orbit, then $|\alpha| < 2\pi$ or $|\alpha| > 6\pi$ and the orbit is dense in the DT component of $\Rep_\alpha(\surface,\psl)$.
\end{cor}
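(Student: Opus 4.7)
The plan is to deduce Corollary~\ref{cor:cantat-loray} from Cantat-Loray's Theorem~C by combining it with Mondello's classification of compact components (Corollary~\ref{cor:mondello}). The approach has two moving parts: a dynamical input (Cantat-Loray) and a topological input (Mondello), with a translation step between two quotients in the middle.

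First, I would recall the Fricke-Vogt realization of the $4$-punctured sphere $\mathrm{SL}_2\C$-character variety: fixing the four peripheral traces, equivalently fixing the angle vector $\alpha$, cuts out an affine cubic surface in $\C^3$ on which the pure mapping class group acts by explicit polynomial automorphisms. Cantat-Loray's Theorem~C gives a complete dichotomy for this action: every bounded orbit is either finite or its closure is a minimal compact invariant subset of real points of the cubic surface. Next, I would transfer this statement to our setting by noting, as in the footnote preceding the corollary, that every representation whose conjugacy class lies in an $\alpha$-relative character variety is reductive, so the topological quotient $\Rep_\alpha(\surface,\psl)$ agrees with the real locus of the relevant GIT quotient. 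An infinite bounded pure mapping class group orbit must therefore have closure equal to a compact minimal invariant set, necessarily contained in a compact connected component of $\Rep_\alpha(\surface,\psl)$.

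Finally, I would apply Corollary~\ref{cor:mondello} to identify these compact components: they are either isolated points (when $|\alpha|\in\{2\pi,4\pi,6\pi\}$) or the unique DT component (when $|\alpha|\in (0,2\pi)\cup(6\pi,8\pi)$). Since an infinite orbit cannot sit inside an isolated point, the orbit closure must coincide with the DT component, which forces $|\alpha|<2\pi$ or $|\alpha|>6\pi$ and yields the claimed density. The step I expect to be most delicate is the translation between Cantat-Loray's $\mathrm{SL}_2\C$-GIT picture and the $\psl$-topological quotient used here: one must verify that their minimality result on each bounded real component of the cubic surface does transfer to density in the full DT component of $\Rep_\alpha(\surface,\psl)$, and not merely in a proper invariant subset of it. This is precisely the subtlety flagged in the footnote, and is what makes the corollary a nontrivial consequence of Cantat-Loray's theorem rather than a mere rephrasing.
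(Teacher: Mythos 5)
The paper does not actually supply a proof of Corollary~\ref{cor:cantat-loray}: it is a direct citation of Cantat--Loray's Theorem~C, with the footnote handling the only translation required, namely that the $\psl$-topological quotient coincides with the real locus of the $\mathrm{SL}_2\C$-GIT quotient on the relevant part of the character variety because the representations there are reductive. You treat that translation correctly, and that is indeed the subtlety the paper flags.

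Where your proposal departs from the paper, and where it runs into trouble, is that you read Cantat--Loray's Theorem~C as asserting only that an infinite bounded orbit has closure equal to \emph{a} minimal compact invariant set, and then you try to bridge to density via Corollary~\ref{cor:mondello}. That bridge has a genuine gap, which you flag but do not close. Minimality of the orbit closure among nonempty closed invariant subsets does not imply that the closure is an entire connected component; and compactness of the orbit closure does not imply that the ambient component is compact, so the assertion that the closure is \emph{``necessarily contained in a compact connected component''} is exactly the claim you need, not something that follows. The inference \emph{``the orbit closure must coincide with the DT component''} is therefore unsupported, and Mondello's Corollary~\ref{cor:mondello}, which classifies compact components but says nothing about minimality of the mapping class group action on them, cannot repair it. In fact Cantat--Loray's Theorem~C states directly that an infinite bounded orbit is dense in the unique bounded component of the real cubic surface, together with trace inequalities on the boundary data which translate to $|\alpha|<2\pi$ or $|\alpha|>6\pi$; the density and the angle constraint are outputs of their theorem, not things to be reassembled. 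By weakening the input from Cantat--Loray you have manufactured the very gap you worry about; the correct derivation is essentially just the footnote's observation about the two quotients, as the paper intends.
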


Consequently, no matter if the orbit of $[\rho]$ is finite or infinite, $\rho$ is always either a DT or an orthogonal representation. This proves Theorem~\ref{thm-intro} for 4-punctured spheres.

\subsubsection{The case $n\geq 5$}\label{sec:n-geq-5}
Let now $\surface$ denote a sphere with $n\geq 5$ punctures and $\rho\colon\pi_1\surface\to\psl$ be a reduced totally elliptic representation. As before, we start with the case of representations with a non-peripheral simple closed curve in their kernel.

\begin{lem}\label{lem:scc-in-kernel}
If $\rho\colon\pi_1\surface\to\psl$ is a reduced totally elliptic representation with a non-peripheral simple closed curve in its kernel, then $\rho$ is orthogonal.    
\end{lem}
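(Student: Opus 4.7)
The plan is to show directly that the peripheral images $\rho(c_1),\ldots,\rho(c_n)$ commute pairwise, which places them in a single copy of $\PSO(2)$ and so makes $\rho$ orthogonal. The strategy mimics Lemma~\ref{lem:scc-in-kernel-n=4}: for each pair of indices lying on opposite sides of $\gamma$, one exhibits a simple closed curve on $\surface$ whose $\rho$-image is a commutator of two peripheral images, and then applies Lemma~\ref{lem:commutator-hyperbolic} to force that commutator to be trivial. After choosing suitable geometric generators---which is always possible by the change-of-coordinates principle of the mapping class group---I may assume $\gamma = c_1 c_2 \cdots c_k$ in $\pi_1\surface$, with $2 \leq k \leq n-2$, so that $\rho(c_1 c_2 \cdots c_k) = 1$.

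The key construction is the following: for each $j \in \{k+1,\ldots,n\}$, the fundamental group element
\[
\delta_j \;=\; c_1 \cdot c_j^{-1}(c_2 c_3 \cdots c_k)\, c_j
\]
represents a simple closed curve on $\surface$. For $n=4$ and $k=2$, this recovers the element $c_1 c_j^{-1} c_2 c_j$ used in Lemma~\ref{lem:scc-in-kernel-n=4}. In general, $\delta_j$ should be realized as the image of the simple closed curve $\gamma$ under the Dehn twist about the simple closed curve represented by $c_2 c_3 \cdots c_k c_j$, which encloses the punctures $\{p_2,\ldots,p_k,p_j\}$. Since a Dehn twist is a self-homeomorphism of $\surface$ that sends simple closed curves to simple closed curves, $\delta_j$ is simple; the precise $\pi_1$ formula can be checked by drawing the twisted curve, exactly as was done for $n=4$.

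Once $\delta_j$ is known to be simple, the computation is immediate. The relation $\rho(\gamma)=1$ gives $\rho(c_2 c_3 \cdots c_k) = \rho(c_1)^{-1}$, and therefore
\[
\rho(\delta_j) \;=\; \rho(c_1)\,\rho(c_j)^{-1}\,\rho(c_1)^{-1}\,\rho(c_j) \;=\; [\rho(c_1),\rho(c_j)^{-1}].
\]
Total ellipticity of $\rho$ renders this commutator elliptic, and Lemma~\ref{lem:commutator-hyperbolic} then forces it to be the identity. Hence $\rho(c_1)$ commutes with $\rho(c_j)$ for every $j>k$. A symmetric argument, using simple closed curves of the form $c_{k+1} \cdot c_i^{-1}(c_{k+2} c_{k+3} \cdots c_n)\, c_i$ and the identity $\rho(c_{k+1} c_{k+2} \cdots c_n) = \rho(\gamma^{-1}) = 1$, yields that $\rho(c_{k+1})$ commutes with $\rho(c_i)$ for every $i \leq k$.

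Because $\rho$ is reduced, $\rho(c_1)$ and $\rho(c_{k+1})$ are both non-trivial elliptic elements, each with a unique fixed point in $\HH$. The first run of commutators forces every $\rho(c_j)$ with $j>k$ to fix the fixed point $o$ of $\rho(c_1)$; in particular, $\rho(c_{k+1})$ fixes $o$, and since it has a unique fixed point, $o$ is that point too. The second run of commutators then forces every $\rho(c_i)$ with $i \leq k$ to fix $o$. All peripheral images $\rho(c_1),\ldots,\rho(c_n)$ therefore lie in the $\psl$-stabilizer of $o$, a conjugate of $\PSO(2)$, and $\rho$ is orthogonal. The main obstacle of the proof is the geometric verification that $\delta_j$ is a simple closed curve for arbitrary $k$; this should rely on a Dehn-twist picture analogous to the one spelled out in Lemma~\ref{lem:scc-in-kernel-n=4} for $n=4$ and $k=2$, and carrying it out carefully is the only genuinely non-routine step.
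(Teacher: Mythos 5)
Your argument is correct and follows the same overall strategy as the paper: choose geometric generators so the kernel curve is $c_1\cdots c_k$, produce auxiliary simple closed curves by Dehn-twisting this kernel curve, and apply Lemma~\ref{lem:commutator-hyperbolic} to turn total ellipticity into pairwise commutation of the peripheral images, which forces a common fixed point in $\HH$. The difference is in the choice of twisting curve: the paper twists $c_1\cdots c_{i+1}$ along the two-puncture curve $c_{i+1}c_j$ (and its mirror image), obtaining $(c_1\cdots c_i)c_j^{-1}c_{i+1}c_j$ and hence the commutator $[\rho(c_{i+1}),\rho(c_j)]$; you twist along the $k$-puncture curve $(c_2\cdots c_k)c_j$ to get $\delta_j=c_1\,c_j^{-1}(c_2\cdots c_k)c_j$ and the commutator $[\rho(c_1),\rho(c_j)]$. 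Both work. Your formula for $\delta_j$ is in fact the correct $\pi_1$-expression: writing $\beta=(c_2\cdots c_k)c_j$, one has $T_\beta(c_1)=c_1$ (the loop $c_1$ lies to the left of the jump of $\beta$ over $p_{k+1},\ldots,p_{j-1}$ and never meets it), and $T_\beta(c_m)=\beta^{-1}c_m\beta$ for $m\in\{2,\ldots,k\}$, which gives $T_\beta(\gamma)=c_1\beta^{-1}(c_2\cdots c_k)\beta=c_1 c_j^{-1}(c_2\cdots c_k)c_j=\delta_j$; and your second family is obtained from the first by the cyclic homeomorphism of the punctured sphere that sends $c_m$ to $c_{m+k}$ (indices mod $n$), so it is simple for the same reason. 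Even if one is unsure about the twist direction, the alternative $c_1\beta(c_2\cdots c_k)\beta^{-1}$ is also simple and evaluates under $\rho$ to $[\rho(c_j),\rho(c_1)^{-1}]$, giving the same conclusion via Lemma~\ref{lem:commutator-hyperbolic}. The paper's choice of a two-puncture twisting curve makes the picture-drawing slightly easier, but is otherwise equivalent; the paper also records the intermediate relation $[\rho(c_{i+1}),\rho(c_{i+2})]=1$ before the general step, which you correctly recognized as dispensable.
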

\begin{proof}
We choose a geometric generating family of $\pi_1\surface$ consisting of generators $c_1,\ldots, c_n$ satisfying $c_1\cdots c_n=1$, similarly as on Figure~\ref{fig:geometric-generators}. They can be chosen in a way that the simple closed curve in the kernel of $\rho$ is represented by the fundamental group element $c_1\cdots c_{i+1}$ for some $i\in\{1,\ldots,n-3\}$. This implies that $\rho(c_{i+1})=\rho(c_1\cdots c_i)^{-1}$ and $\rho(c_{i+2})=\rho(c_{i+3}\cdots c_n)^{-1}$. We will argue as in the proof of Lemma~\ref{lem:scc-in-kernel-n=4}. First, note that the fundamental group element $(c_1\cdots c_i) c_{i+2}^{-1}c_{i+1}c_{i+2}$ represents a simple closed curve on $\surface$ that is the image of the simple closed curve represented by $c_1\cdots c_{i+1}$ under the Dehn twist along $c_{i+1}c_{i+2}$. Since we are assuming $\rho$ to be totally elliptic, it implies that $\rho\big((c_1\cdots c_i) c_{i+2}^{-1}c_{i+1}c_{i+2}\big)=\big[\rho(c_{i+1}^{-1}),\rho(c_{i+2}^{-1})\big]$ is elliptic. We deduce that $\big[\rho(c_{i+1}),\rho(c_{i+2})\big]=1$ from Lemma~\ref{lem:commutator-hyperbolic}. So, up to conjugating $\rho$, we may assume that $\rho(c_{i+1})$ and $\rho(c_{i+2})$ both belong to $\PSO(2)$. 

Now, observe further that the fundamental group element $(c_1\cdots c_i)c_j^{-1}c_{i+1}c_j$ represents a simple closed curve for every $j\geq i+2$ which is the image of the simple closed curve represented by $c_1\cdots c_{i+1}$ under the Dehn twist along $c_{i+1}c_j$. Similarly, for every $j\leq i+1$, $(c_{i+3}\cdots c_n)^{-1}c_jc_{i+2}^{-1}c_j^{-1}$ represents the simple closed curve obtained by applying the Dehn twist along $c_jc_{i+2}$ to the simple closed curve represented by $(c_{i+2}\cdots c_n)^{-1}$. Since $\rho$ is totally elliptic, we conclude that $\rho\big((c_1\cdots c_i)c_j^{-1}c_{i+1}c_j\big)=\big[\rho(c_{i+1})^{-1},\rho(c_j)^{-1}\big]$ and $\rho\big((c_{i+3}\cdots c_n)^{-1}c_jc_{i+2}^{-1}c_j^{-1}\big)=\big[\rho(c_{i+2}),\rho(c_j)\big]$ are both elliptic, hence trivial by Lemma~\ref{lem:commutator-hyperbolic}. Because we are assuming $\rho$ to be reduced, it holds that $\rho(c_{i+1})\neq 1$ and $\rho(c_{i+2})\neq 1$. We conclude that $\rho(c_j)$ belongs to $\PSO(2)$ for every index $j$. This proves that $\rho$ is orthogonal.
\end{proof}

From now on, we will assume that $\rho$ is a reduced totally elliptic representation with no simple closed curves in its kernel. In the terminology introduced after Definition~\ref{def:DT-representation}, $\rho$ is regularly totally elliptic. To such a representation corresponds a \emph{chain of triangles} in the hyperbolic plane $\HH$. It is constructed from a \emph{chained pants decomposition} $\mathcal{B}$ of $\surface$, by which we mean a pants decomposition of $\surface$ in which each pair of pants contains at least one puncture. We then choose a geometric presentation of $\pi_1\surface$ with generators $c_1,\ldots,c_n$ satisfying $c_1\cdots c_n=1$, such that the $n-3$ pants curves of $\mathcal{B}$ are represented by the fundamental group elements $b_i=(c_1\cdots c_{i+1})^{-1}$ for $i=1,\ldots, n-3$. This is always possible as explained in~\cite[Appendix~B]{action-angle}. Since $\rho$ is reduced and has no simple closed curves in its kernel, all the elliptic elements $\rho(c_1),\ldots,\rho(c_n)$ and $\rho(b_1),\ldots,\rho(b_{n-3})$ are regular, and have a unique fixed point in $\HH$ which we denote by $C_1,\ldots,C_n$ and $B_1,\ldots, B_{n-3}$. The triangle chain of $\rho$ consists of the $n-2$ triangles
\[
(C_1,C_2,B_1), (B_1,C_3,B_2),\ldots, (B_{n-4},C_{n-2},B_{n-3}), (B_{n-3},C_{n-1},C_n).
\]
We call it the \emph{$\mathcal{B}$-triangle chain of $\rho$}. For instance, when $n=6$, the $\mathcal{B}$-triangle chain of $\rho$ may look like this.
\begin{center}
\begin{tikzpicture}[font=\sffamily,decoration={
    markings,
    mark=at position 1 with {\arrow{>}}}]]
    
\node[anchor=south west,inner sep=0] at (0,0) {\includegraphics[width=9cm]{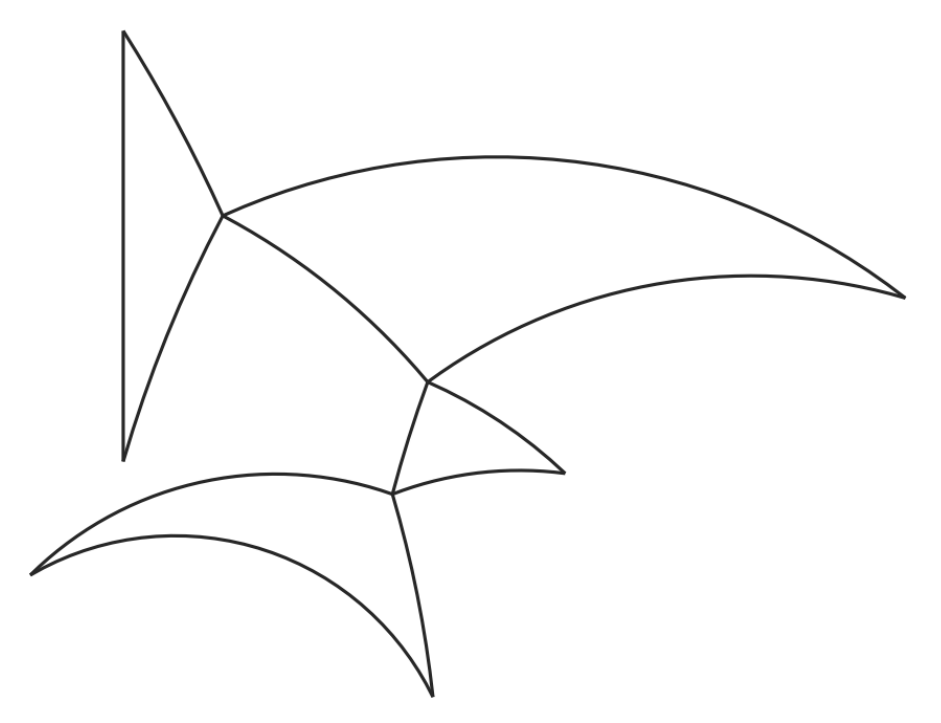}};

\begin{scope}
\fill (1.19,2.53) circle (0.07) node[left]{$C_1$};
\fill (1.19,6.62) circle (0.07) node[left]{$C_2$};
\fill (8.65,4.08) circle (0.07) node[below right]{$C_3$};
\fill (5.4,2.4) circle (0.07) node[below right]{$C_4$};
\fill (4.14,0.28) circle (0.07) node[below right]{$C_5$};
\fill (.31,1.43) circle (0.07) node[left]{$C_6$};
\end{scope}

\begin{scope}[apricot]
\fill (2.15,4.86) circle (0.07);
\fill (2.25,5.2) node{$B_1$};
\fill (4.1,3.27) circle (0.07) node[left]{$B_2$};
\fill (3.76,2.2) circle (0.07);
\fill (4.05,2.05) node{$B_3$};
\end{scope}
\end{tikzpicture}
\end{center}
For more details on triangle chains, the reader is referred to~\cite{action-angle}.

Every triangle in the chain of $\rho$ is associated to one of the $n-2$ pairs of pants determined by $\mathcal{B}$ in the same way as we described in Section~\ref{sec:n=3}. So, each triangle can be either degenerate to a single point, or clockwise or anti-clockwise oriented. It turns out that being a DT representation can be characterized by the orientations of the triangles.
\begin{prop}[{\cite[Lemma~3.5]{action-angle}}]\label{prop:DT-if-triangle-coherently-oriented}Let $\rho\colon\pi_1\surface\to\psl$ be a reduced regularly totally elliptic representation.
If there exists a chained pants decomposition $\mathcal{B}$ of $\surface$ such that the $\mathcal{B}$-triangle chain of $\rho$ contains at least one non-degenerate triangle and all non-degenerate triangles have the same orientation, then $\rho$ is a DT representation. Conversely, if $\rho$ is a DT representation, then for every chained pants decomposition $\mathcal{B}$ of $\surface$, all non-degenerate triangles in the $\mathcal{B}$-triangle chain of $\rho$ have the same orientation.
\end{prop}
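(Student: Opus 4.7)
The plan is to leverage the additivity of the Toledo number over the chained pants decomposition $\mathcal{B}$, together with an explicit formula for the Toledo number of each individual pants representation in terms of the orientation of its triangle.

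The first step is to compute, for each pair of pants $P$ in $\mathcal{B}$, the Toledo number of the restriction $\rho|_P$. Because $\rho$ is reduced and regularly totally elliptic, the three boundary holonomies of $\rho|_P$ are regular elliptic, so the case analysis carried out in Section~\ref{sec:n=3} applies. It shows that $\mathrm{Tol}(\rho|_P)$ equals $1 - |\alpha_P|/(2\pi)$ when the associated triangle is anti-clockwise, $2 - |\alpha_P|/(2\pi)$ when it is clockwise, and an integer (either $0$ or $-1$) when it is degenerate. Here $|\alpha_P|$ is the sum of the rotation angles of the three boundary holonomies of $P$, where the internal pants curve $b_i$ is assigned rotation angle $\beta_i$ on one side and $2\pi - \beta_i$ on the other.

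The second step is to invoke additivity
\[
\mathrm{Tol}(\rho)=\sum_{P\in\mathcal{B}} \mathrm{Tol}(\rho|_P)
\]
and to telescope the contributions of the internal pants curves. Each $b_i$ bounds exactly two pants and contributes $\beta_i/(2\pi)$ on one side and $(2\pi-\beta_i)/(2\pi)$ on the other, totalling~$1$. Under the hypothesis that every non-degenerate triangle is anti-clockwise, the sum collapses to $1 - |\alpha|/(2\pi)$; under the dual hypothesis it collapses to $(n-1) - |\alpha|/(2\pi)$. Either value coincides with the DT Toledo number from~\eqref{eq:toledo}, so Corollary~\ref{cor:mondello} places $[\rho]$ in the unique DT component of its relative character variety. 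For the converse, I would argue that any mixture of clockwise and anti-clockwise triangles produces partial cancellations that force $\mathrm{Tol}(\rho)$ strictly between the two extremal DT values, contradicting~\eqref{eq:toledo}.

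The hard part will be the meticulous bookkeeping of signs and orientation conventions, especially at degenerate triangles, where the Toledo contribution depends on whether one views a degenerate pants as a limit of anti-clockwise or of clockwise ones. These choices have to be made coherently across the whole chain so that the telescoping identity produces exactly one of the two DT Toledo values. This careful accounting is precisely what is handled in detail by~\cite[Lemma~3.5]{action-angle}, to which one can appeal.
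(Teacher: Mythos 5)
The paper does not prove this proposition; it is imported verbatim as a citation to \cite[Lemma~3.5]{action-angle}, so there is no in-paper argument to compare against. Judged on its own merits, your Toledo-additivity plan is sound when every triangle in the chain is non-degenerate: the bookkeeping
\[
\sum_{i=1}^{n-2}|\alpha_{P_i}| = |\alpha| + 2\pi(n-3)
\]
is correct, and plugging $1-|\alpha_P|/(2\pi)$ (anti-clockwise) or $2-|\alpha_P|/(2\pi)$ (clockwise) into $\mathrm{Tol}(\rho)=\sum_P\mathrm{Tol}(\rho|_P)$ does telescope to $1-|\alpha|/(2\pi)$ or $(n-1)-|\alpha|/(2\pi)$ exactly as you say, and the converse count (any mixture lands strictly between the two extremal values) also goes through. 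This is very much in the spirit of Lemma~\ref{lem:4-punctured-small-big-angles} and of what \cite{action-angle} does.

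There are, however, two genuine problems. First, the line ``$\mathrm{Tol}(\rho|_P)$ equals \dots an integer (either $0$ or $-1$) when it is degenerate'' is wrong: a degenerate pair of pants carries an orthogonal representation whose Toledo number is $0$, never $-1$; you obtained $-1$ by plugging $|\alpha_P|=4\pi$ into the anti-clockwise formula $1-|\alpha_P|/(2\pi)$, which is not applicable there. Second, and more seriously, once you replace that incorrect integer by the correct value $0$, the telescoping no longer collapses automatically in the presence of degenerate triangles. Writing the total as $\mathrm{Tol}(\rho)=(n-1)-K-|\alpha|/(2\pi)$ with $K=k_+ + m_1$ ($k_+$ anti-clockwise triangles, $m_1$ degenerate ones with $|\alpha_P|=2\pi$), the hypothesis ``all non-degenerate triangles anti-clockwise'' only fixes $k_-=0$, not $m_2$ (degenerate triangles with $|\alpha_P|=4\pi$); to land on the DT value you must also rule out $m_2>0$, and nothing in your sketch does so. That exclusion is forced by the total-ellipticity hypothesis, not by Toledo additivity alone, and it is precisely the content you would need to supply. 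The closing sentence, which hands exactly this careful accounting back to \cite[Lemma~3.5]{action-angle}, is circular: that lemma is the statement being proved. A clean way to finish would be to first reduce to a chained pants decomposition with no degenerate triangles (as the paper does for its application of this proposition in Proposition~\ref{prop:totally-elliptic-implies-DT}, following \cite{aaron-arnaud}), after which your Toledo argument is complete.
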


If the $\mathcal{B}$-triangle chain of $\rho$ only contains degenerate triangles, then all $\rho(c_i)$ fix the same point. This means that $\rho$ is an orthogonal representation. Instead, if at least one triangle is non-degenerate, then the elliptic elements $\rho(c_i)$ are not all rotations around the same point and $\rho$ is not orthogonal. In the latter case, we can actually replace $\mathcal{B}$ by a new chained pants decomposition of $\surface$ and find a compatible geometric presentation of $\pi_1\surface$ such that the new triangle chain of $\rho$ only consists of non-degenerate triangles. This new pants decomposition can be constructed using the same algorithm as in~\cite[Proposition~2]{aaron-arnaud} and the compatible geometric presentation of $\pi_1\surface$ is obtained by the same reasoning as in~\cite[Proposition~3]{aaron-arnaud}. From now on, we will therefore assume that all the triangles in the $\mathcal{B}$-triangle chain of $\rho$ are non-degenerate.

We denote the rotation angles of the images of the pants curve $b_1,\ldots,b_{n-3}$ in $\mathcal{B}$ under $\rho$ by $\beta_1,\ldots,\beta_{n-3}\in(0,2\pi)$. Recall that $\rho(b_1),\ldots,\rho(b_{n-3})$ are all regular elliptic because we are assuming $\rho$ to be regularly totally elliptic. 

\begin{prop}\label{prop:totally-elliptic-implies-DT}
Under our assumptions that $\rho$ is a reduced regularly totally elliptic representation and that its $\mathcal{B}$-triangle chain is made of non-degenerate triangles, then all triangles must have the same orientation. Hence, $\rho$ is a DT representation by Proposition~\ref{prop:DT-if-triangle-coherently-oriented}.
\end{prop}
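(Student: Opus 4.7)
The plan is to reduce to the $n=4$ case (Section~\ref{sec:n=4}) by restricting $\rho$ to each $4$-holed subsurface formed by two adjacent pants in $\mathcal{B}$. Fix an index $k$ and consider the two adjacent triangles $T_k = (B_{k-1}, C_{k+1}, B_k)$ and $T_{k+1} = (B_k, C_{k+2}, B_{k+1})$ in the chain (with the convention $B_0 = C_1$ and $B_{n-2} = C_n$ so that the extremal triangles fit this pattern). Let $\surface' \subset \surface$ be the $4$-holed sphere obtained by fusing the two pairs of pants sharing the pants curve $b_k$, and let $\rho' := \rho|_{\pi_1\surface'}$. The four peripheral curves of $\surface'$ correspond to the four outer vertices of $T_k \cup T_{k+1}$, and its unique interior pants curve is $b_k$.

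The first step is to check that $\rho'$ enters the scope of Section~\ref{sec:n=4}. Any simple closed curve in $\surface'$ remains simple and essential in $\surface$, so $\rho'$ is totally elliptic. Moreover, the four peripheral curves of $\surface'$ are mapped to regular elliptic elements: the $c$-generators because $\rho$ is reduced, and adjacent $b$-curves because $\rho$ is regularly totally elliptic. Hence $\rho'$ is a reduced totally elliptic representation of a $4$-punctured sphere, and by Section~\ref{sec:n=4} it must be either orthogonal or a DT representation.

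Orthogonality is excluded directly: it would force $B_{k-1}, C_{k+1}, B_k, C_{k+2}, B_{k+1}$ to coincide, contradicting the standing assumption that all triangles in the chain are non-degenerate. Hence $\rho'$ is a DT representation. The singleton $\{b_k\}$ is a chained pants decomposition of $\surface'$ whose associated triangle chain for $\rho'$ is precisely $\{T_k, T_{k+1}\}$, so the converse part of Proposition~\ref{prop:DT-if-triangle-coherently-oriented} forces $T_k$ and $T_{k+1}$ to share their orientation. Running this argument for every $k$ yields a common orientation for all triangles in the $\mathcal{B}$-chain of $\rho$, and the direct half of Proposition~\ref{prop:DT-if-triangle-coherently-oriented} then concludes that $\rho$ is a DT representation.

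The main difficulty I anticipate is the bookkeeping at the interface $\pi_1\surface' \hookrightarrow \pi_1\surface$: one must ensure that every essential non-peripheral simple closed curve in $\surface'$ remains simple and essential in $\surface$ (so that total ellipticity descends to $\rho'$), and that $\{b_k\}$ is a \emph{chained} pants decomposition of $\surface'$ in the sense of~\cite{action-angle} (so that Proposition~\ref{prop:DT-if-triangle-coherently-oriented} is truly available). Both facts stem from $\mathcal{B}$ being a chained pants decomposition of $\surface$, with each of the two pants in $\surface'$ inheriting at least one genuine puncture (namely $c_{k+1}$ or $c_{k+2}$). The edge cases $k=1$ and $k=n-3$, in which some boundary curves of $\surface'$ are actual punctures of $\surface$ rather than pants curves of $\mathcal{B}$, are absorbed into the uniform convention above and present no genuine issue.
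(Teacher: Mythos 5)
Your proof is correct and follows essentially the same route as the paper: restrict $\rho$ to the $n-3$ four-punctured subspheres obtained by fusing adjacent pairs of pants across each pants curve $b_k$, invoke the $n=4$ case to conclude each restriction is a DT representation (orthogonality being excluded by non-degeneracy of the two triangles sharing $B_k$), and propagate the common orientation along the chain by overlapping pairs. The only cosmetic difference is in how you package the edge-case conventions ($B_0 = C_1$, $B_{n-2} = C_n$), which matches the paper's $b_0 = c_1^{-1}$, $b_{n-2} = c_n$ at the level of fixed points.
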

\begin{proof}
For each $i=1,\ldots,n-3$, we consider the 4-punctured spheres $\surface^{(i)}\subset \surface$ with peripheral curves $((b_{i-1})^{-1},c_{i+1},c_{i+2},b_{i+1})$.
\begin{center}
\vspace{2mm}
\begin{tikzpicture}[scale=1.1, decoration={
    markings,
    mark=at position 0.6 with {\arrow{>}}}]
  \draw[postaction={decorate}, black!40] (0,-.5) arc(-90:-270: .25 and .5);
  \draw[black!40] (0,.5) arc(90:-90: .25 and .5);
  \draw[postaction={decorate}] (2,.5) arc(90:270: .25 and .5) node[midway, left]{$b_{i-1}$};
  \draw (2,.5) arc(90:-90: .25 and .5);
  \draw[apricot, postaction={decorate}] (4,.5) arc(90:270: .25 and .5) node[midway, left]{$b_i$};
  \draw[lightapricot] (4,.5) arc(90:-90: .25 and .5);
  \draw[postaction={decorate}] (6,.5) arc(90:270: .25 and .5) node[midway, left]{$b_{i+1}$};
  \draw (6,.5) arc(90:-90: .25 and .5);
  \draw[postaction={decorate}, black!40] (8,.5) arc(90:270: .25 and .5);
  \draw[black!40] (8,.5) arc(90:-90: .25 and .5);
  
  \draw[black!40](.5,1) arc(180:0: .5 and .25)node[midway, above]{$c_{i}$};
  \draw[postaction={decorate, black!40}, black!40] (.5,1) arc(-180:0: .5 and .25);
  \draw (2.5,1) arc(180:0: .5 and .25)node[midway, above]{$c_{i+1}$};
  \draw[postaction={decorate}] (2.5,1) arc(-180:0: .5 and .25);
  \draw (4.5,1) arc(180:0: .5 and .25)node[midway, above]{$c_{i+2}$};
  \draw[postaction={decorate}] (4.5,1) arc(-180:0: .5 and .25);
  \draw[black!40] (6.5,1) arc(180:0: .5 and .25)node[midway, above]{$c_{i+3}$};
  \draw[postaction={decorate}, black!40] (6.5,1) arc(-180:0: .5 and .25);
   
  \draw[black!40] (0,.5) to[out=0,in=-90] (.5,1);
  \draw[black!40] (1.5,1) to[out=-90,in=180] (2,.5);
  \draw[black!40] (0,-.5) to[out=0,in=180] (2,-.5);
  
  \draw (2,.5) to[out=0,in=-90] (2.5,1);
  \draw (3.5,1) to[out=-90,in=180] (4,.5);
  \draw (2,-.5) to[out=0,in=180] (4,-.5);
  
  \draw (4,.5) to[out=0,in=-90] (4.5,1);
  \draw (5.5,1) to[out=-90,in=180] (6,.5);
  \draw (4,-.5) to[out=0,in=180] (6,-.5);
  
  \draw[black!40] (6,.5) to[out=0,in=-90] (6.5,1);
  \draw[black!40] (7.5,1) to[out=-90,in=180] (8,.5);
  \draw[black!40] (6,-.5) to[out=0,in=180] (8,-.5);
\end{tikzpicture}
\vspace{2mm}
\end{center}
The sphere $\surface$ admits $n-3$ such sub-spheres, one for each pants curve in $\mathcal{B}$. We follow the convention of writing $b_0=c_1^{-1}$ and $\beta_0=2\pi-\alpha_1$, as well as  $b_{n-2}=c_n$ and $\beta_{n-2}=\alpha_n$. The conjugacy class of the restriction of $\rho$ to $\surface^{(i)}$ is a point in the relative character variety
\[
\Rep_{\alpha^{(i)}}(\surface^{(i)},\psl),
\]
where $\alpha^{(i)}=(2\pi-\beta_{i-1},\alpha_{i+1},\alpha_{i+2},\beta_{i+1})$. The restriction of $\rho$ to $\surface^{(i)}$ is obviously still totally elliptic. 

The standard pants decomposition of $\surface^{(i)}$ associated to the geometric system of generators $((b_{i-1})^{-1},c_{i+1},c_{i+2},b_{i+1})$ of $\pi_1\surface^{(i)}$ consists of one simple closed curve represented by $b_i$. We will denote this pants decomposition by $\mathcal B_i$. The $\mathcal B_i$-triangle chain of the restriction of $\rho$ to $\surface^{(i)}$ is made of the two triangles that share the vertex $B_i$ in the $\mathcal{B}$-triangle chain of $\rho$. These two triangles are non-degenerate by assumption. In particular, this means that none of the restrictions of $\rho$ to any of the spheres $\surface^{(i)}$ is orthogonal. So, by the discussion of Section~\ref{sec:n=4} about 4-punctured spheres, all the restrictions are DT representations because they are totally elliptic. This means that the two triangles in the $\mathcal B_i$-triangle chain of $\rho$ have the same orientation by Proposition~\ref{prop:DT-if-triangle-coherently-oriented}. Now, because the $\mathcal{B}$-triangle chain of $\rho$ is made of the $\mathcal B_i$-triangle chain of each restriction, with an overlap of one non-degenerate triangle for consecutive indices, we conclude that all the triangles in the $\mathcal B$-triangle chain of $\rho$ have the same orientation. 
\end{proof}

Proposition~\ref{prop:totally-elliptic-implies-DT} finishes the proof of Theorem~\ref{thm-intro}.

\section{Totally elliptic representations in $\PSL_2\C$}\label{sec:PSL(2,C)}

\subsection{Overview}
The goal of this section is to classify totally elliptic surface group representations into $\PSL_2\C$---the complexification of $\psl$---by proving Theorems~\ref{thm-intro-complex} and~\ref{thm-intro-complex-reducible}. We will first prove that irreducible totally elliptic representations into $\PSL_2\C$ are either unitary or DT representations (Section~\ref{sec:irreducible}), and then consider the case of reducible totally elliptic representations (Section~\ref{sec:reducible}).

\subsection{Irreducible representations} \label{sec:irreducible}
We start with the case of irreducible surface group representations into $\PSL_2\C$ which are, by definition, representations whose image is not conjugate to a subgroup of $\PSL_2\C$ made of upper triangular elements. 
\begin{prop}\label{prop-irreducible}
Let $\rho\colon\pi_1\surface\to\mathrm{PSL}_2\C$ be a reduced totally elliptic surface group representation. If $\rho$ is irreducible, then either $\rho$ is unitary, or it is conjugate to the composition of a DT representation with the inclusion $\psl\subset \mathrm{PSL}_2\C$.
\end{prop}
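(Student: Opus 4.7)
The plan is to show that $\rho$ is conjugate to a representation valued in a real form of $\PSL_2\C$---either $\psl$ or $\PU(2)$---and then apply Theorem~\ref{thm-intro} to conclude. The key input is that total ellipticity, via simple closed curves, will force the character of $\rho$ to take real values on all of $\pi_1\surface$.

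Recall first that an element $g\in\PSL_2\C$ is elliptic if and only if $\mathrm{tr}^2(g)\in[0,4]\subset\R$, where $\mathrm{tr}^2$ is well-defined on $\PSL_2\C$ via any $\operatorname{SL}_2\C$-lift. Total ellipticity of $\rho$ therefore gives $\mathrm{tr}^2(\rho(\gamma))\in[0,4]$ for every simple closed curve $\gamma$ on $\surface$. By a classical generation theorem for $\operatorname{SL}_2\C$-character varieties of surfaces (Procesi--Horowitz--type identities together with the fact that trace functions of simple closed curves suffice to generate the coordinate ring), this forces the character of $\rho$ to be real-valued on all of $\pi_1\surface$.

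Since $\rho$ is irreducible, its character determines its $\PSL_2\C$-conjugacy class, so $\rho$ is conjugate to its complex-conjugate representation $\bar\rho$. Writing $\bar\rho = g\rho g^{-1}$ and iterating complex conjugation shows that $\bar g g$ centralizes the image of $\rho$; by irreducibility and triviality of the center of $\PSL_2\C$, this forces $\bar g g=1$. A Galois-descent argument (relying on the classification of real forms of $\PSL_2\C$) then produces $h\in\PSL_2\C$ such that $h\rho h^{-1}$ takes values in a real form of $\PSL_2\C$; up to further conjugation this real form is either $\psl$ or $\PU(2)$. If it is $\PU(2)$, then $\rho$ is unitary by definition. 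If it is $\psl$, then $\rho$ is a reduced totally elliptic $\psl$-representation, and Theorem~\ref{thm-intro} says it is either orthogonal or a DT representation. An orthogonal representation has image in $\PSO(2)$, which is abelian and fixes two points of $\CP^1$, hence is reducible when viewed in $\PSL_2\C$---contradicting the irreducibility of $\rho$. Therefore $\rho$ is the composition of a DT representation with the inclusion $\psl\subset\PSL_2\C$.

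The main obstacle I expect is the first step: invoking (or proving) that trace functions of simple closed curves generate the character ring in every topological setting considered in the proposition. A more hands-on alternative is to restrict $\rho$ to each pair of pants in a pants decomposition of $\surface$: the three-generator restriction has its three peripheral squared-traces in $[0,4]\subset\R$, and a direct trace-triple analysis forces it to be valued in a real form. Adjacent pants can then be patched along their shared simple closed curve, using the reduced hypothesis (together with an analogue of Lemma~\ref{lem:scc-in-kernel} ruling out a shared simple closed curve landing in the kernel when $\rho$ is irreducible) to guarantee that $\rho$ is non-trivial on the glueing curve and thereby to pin down a single real form.
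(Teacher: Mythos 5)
Your proof takes essentially the same route as the paper: total ellipticity makes the traces of simple closed curves real, the Procesi/Goldman--Xia result extends real-valuedness of the character to all of $\pi_1\surface$, and irreducibility plus a real character places the image in a real form of $\PSL_2\C$ (you spell out the Galois-descent step that the paper delegates to a reference), after which Theorem~\ref{thm-intro} finishes the $\psl$ case. The only small imprecision is the claimed equivalence ``elliptic if and only if $\mathrm{tr}^2\in[0,4]$''---parabolic elements also have $\mathrm{tr}^2=4$---but since you only use the forward implication, the argument is unaffected.
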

\begin{proof}
For every $\gamma\in\pi_1\surface$, choose a representative $A_\gamma\in\mathrm{SL}_2\C$ of $\rho(\gamma)$ and write $t_\gamma$ for the trace of $A_\gamma$. Since $\rho$ is totally elliptic, for every $\sigma\in\pi_1\surface$ that represents a simple closed curve on $\surface$, the number $t_\sigma$ is real. Using trace relations for $2\times 2$ matrices, it is actually possible to write every trace $t_\gamma$ as a polynomial expression in the traces $t_\sigma$, where $\sigma$ ranges over fundamental group elements representing simple closed curves. This was initially observed by Goldman--Xia in their work on ergodic dynamics on character varieties~\cite[Section~2]{goldman-xia} as a consequence of classical results by Procesi~\cite{procesi}. In particular, this means that the traces $t_\gamma$ are real numbers for all $\gamma\in\pi_1\surface$. Since $\rho$ is irreducible, it implies that the image of $\rho$ is conjugate into a real form of $\PSL_2\C$ (see for instance~\cite[Corollary~3.2.5]{reid}). The real forms of $\PSL_2\C$ being $\PU(2)$ and $\psl$, we conclude that $\rho$ is either unitary or conjugate to a representation into $\psl$. In the latter case, since $\rho$ is reduced, Theorem~\ref{thm-intro} applies and shows that $\rho$ is conjugate to the composition of a DT representation with the inclusion $\psl\subset \PSL_2\C$.
\end{proof}

\subsection{Reducible representations}\label{sec:reducible}
A reducible surface group representation $\rho\colon\pi_1\surface\to\mathrm{PSL}_2\C$ is conjugate to a representation whose image is contained in the subgroup of upper triangular elements of $\mathrm{PSL}_2\C$. So, up to conjugating a reducible representation $\rho$, we may decompose it into a linear part $\lambda\colon \pi_1\surface\to\C^\times$ and a function $z\colon\pi_1\surface\to\C$ by writing 
\[
\rho(\gamma)=\pm\begin{pmatrix}
\lambda(\gamma) & \lambda(\gamma)^{-1}z(\gamma)\\
0 & \lambda(\gamma)^{-1}
\end{pmatrix}.
\]
The function $z$ turns out to be a cocycle in the sense that $z(\gamma_1\gamma_2)=z(\gamma_1)+\lambda(\gamma_1)^2 z(\gamma_2)$ for every $\gamma_1,\gamma_2\in\pi_1\surface$. If $\rho$ is totally elliptic, then $|\lambda(\gamma)|=1$ for every $\gamma\in\pi_1\surface$ because any system of geometric generators of $\pi_1\surface$ is made of simple closed curves (as the one illustrated on Figure~\ref{fig:geometric-generators}). 
\begin{prop}\label{prop-reducible}
Consider a surface $\surface$ of genus $g\geq 1$ and with $n\geq 0$ punctures, and let $\rho\colon\pi_1\surface\to\mathrm{PSL}_2\C$ be a reduced totally elliptic representation. If $\rho$ is reducible, then the image of $\rho$ is conjugate to a subgroup made of diagonal elements in $\PU(2)$.
\end{prop}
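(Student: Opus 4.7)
My approach is to show that the cocycle $z$ is a coboundary for the twisted $\pi_1\surface$-module $(\C,\lambda^2)$: there exists $w\in\C$ with $z(\gamma)=(1-\lambda(\gamma)^2)w$ for every $\gamma\in\pi_1\surface$. If this holds, conjugating $\rho$ by $\bigl(\begin{smallmatrix}1 & w \\ 0 & 1\end{smallmatrix}\bigr)$ kills the upper-right entries and renders $\rho$ diagonal; combined with $|\lambda|\equiv 1$ (a consequence of total ellipticity applied to the simple closed curve generators from Figure~\ref{fig:geometric-generators}), the image then lies in the diagonal torus of $\PU(2)$. Geometrically, $w$ is the common ``second'' fixed point in $\CP^1\smallsetminus\{\infty\}$ of every image element: for $\gamma$ with $\lambda(\gamma)^2\neq 1$, the non-infinity fixed point of the Möbius transformation $\rho(\gamma)$ is $w_\gamma:=z(\gamma)/(1-\lambda(\gamma)^2)$, so the proposition reduces to showing that $w_\gamma$ does not depend on $\gamma$.

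For the closed torus ($g=1$, $n=0$), $\pi_1\surface=\Z^2$, so $\rho(a_1)$ and $\rho(b_1)$ are commuting elliptic elements both fixing $\infty$. If both are non-trivial, each has a second fixed point in $\C$, and $\rho(b_1)$ must permute the fixed points of $\rho(a_1)$; since $\rho(b_1)$ fixes $\infty$, the permutation is trivial, forcing $w_{a_1}=w_{b_1}$. Conjugating this common value to $0$ places the image in the diagonal torus of $\PU(2)$.

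For $\surface$ not a closed torus, the key computation uses that $\lambda$ vanishes on every commutator, together with the cocycle identity
\[
z([\gamma_1,\gamma_2])\;=\;(1-\lambda(\gamma_2)^2)\,z(\gamma_1)\;-\;(1-\lambda(\gamma_1)^2)\,z(\gamma_2).
\]
Whenever $[\gamma_1,\gamma_2]$ represents a simple closed curve, total ellipticity plus $\lambda([\gamma_1,\gamma_2])=1$ forces $\rho([\gamma_1,\gamma_2])$ to be trivial (otherwise it would be parabolic), so $z([\gamma_1,\gamma_2])=0$. Assuming $\lambda(\gamma_k)^2\neq 1$, this rearranges to $w_{\gamma_1}=w_{\gamma_2}$. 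Applied to the commutator $[a_i,b_i]$---a simple closed curve by Lemma~\ref{lem:commutator-scc-is-scc}---this yields $w_{a_i}=w_{b_i}$; the commutators of Lemma~\ref{lem:list-of-scc-for-genus} chain these equalities across different indices when $g\geq 2$. For peripheral generators (when $n\geq 1$), the simple closed curves of Lemma~\ref{lem:list-of-scc-for-genus-and-punctures} are not themselves commutators but still satisfy $\lambda=1$; a similar (though longer) cocycle computation, combined with the already-established equalities $w_{a_i}=w_{b_i}$, forces $w_{c_j}=w_{a_i}$ as soon as $\lambda(c_j)^2\neq 1$.

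The main obstacle is the bookkeeping when some $\lambda$-value on a generator equals $\pm 1$: for any such simple closed curve $\sigma$, total ellipticity (ruling out parabolic images) forces $\rho(\sigma)=\pm I$, so $z(\sigma)=0$ and $\sigma$ places no constraint on $w$. The one genuinely degenerate case is $(g,n)=(1,1)$, where $[a_1,b_1]=c_1$ is peripheral: the same argument forces $\rho(c_1)=\pm I$, contradicting the reduced hypothesis, so this configuration is vacuous. In every remaining configuration, the reduced hypothesis guarantees at least one generator with $\lambda^2\neq 1$, which pins down $w$ uniquely; the remaining generators either share this $w$ by the relations above or are mapped to $\pm I$, and the conjugation by $\bigl(\begin{smallmatrix}1 & w \\ 0 & 1\end{smallmatrix}\bigr)$ completes the proof.
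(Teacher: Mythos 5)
Your proof is correct and follows essentially the same route as the paper's. The paper computes the commutator of two upper triangular elements (its formula~\eqref{commutator}), observes that elliptic-iff-trivial holds just as in Lemma~\ref{lem:commutator-hyperbolic}, and then appeals to ``the arguments of Section~\ref{sec:positive-genus}'' to conclude $\rho$ is abelian, after which abelian+upper triangular+elliptic gives diagonalizability. Your version is the same argument made explicit: the quantity $w_\gamma=z(\gamma)/(1-\lambda(\gamma)^2)$ is precisely the non-$\infty$ fixed point, the coboundary condition $z(\gamma)=(1-\lambda(\gamma)^2)w$ is exactly the statement that the image stabilizes the pair $\{\infty,w\}$ (i.e.\ is simultaneously diagonalizable), and your cocycle identity $z([\gamma_1,\gamma_2])=(1-\lambda(\gamma_2)^2)z(\gamma_1)-(1-\lambda(\gamma_1)^2)z(\gamma_2)$ is an unpacking of $z(\gamma_1\gamma_2)-z(\gamma_2\gamma_1)$ from~\eqref{commutator}. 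Your handling of the edge cases ($\lambda^2=1$ forcing $\rho(\sigma)=\pm I$ by ruling out parabolics; the $(g,n)=(1,1)$ configuration being vacuous by the reduced hypothesis) and the separate closed-torus argument are welcome details that the paper delegates to the reader, and your observation that the coboundary condition propagates from generators to all of $\pi_1\surface$ (by the cocycle relation) correctly patches the step ``abelian implies conjugate to diagonal.''

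Two small imprecisions, neither fatal: the conjugating matrix should be $\bigl(\begin{smallmatrix}1 & -w\\ 0 & 1\end{smallmatrix}\bigr)$ rather than $\bigl(\begin{smallmatrix}1 & w\\ 0 & 1\end{smallmatrix}\bigr)$ (moving the fixed point $w$ to $0$); and the claim that the simple closed curves of Lemma~\ref{lem:list-of-scc-for-genus-and-punctures} ``satisfy $\lambda=1$'' is literally true only for the first two elements in~\eqref{eq:scc-one-puncture}, while the third element~\eqref{eq:scc-two-punctures} has $\lambda$-value $\lambda(a_ib_i)^{-1}$. This does not harm the argument because~\eqref{eq:scc-two-punctures} is only invoked when $\rho(a_i)=\rho(b_i)=\pm I$, in which case the image reduces to the commutator $[\rho(c_k),\rho(c_j)]$ whose diagonal is trivial as needed; you may want to say so explicitly.
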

\begin{proof}
Up to conjugating $\rho$, we may assume it is upper triangular. A simple computation shows that the commutator of two upper triangular $2\times 2$ matrices is elliptic if and only if it is trivial. More precisely, it holds that
\begin{equation}\label{commutator}
\left[\begin{pmatrix}
\lambda(\gamma_1) & \lambda(\gamma_1)^{-1}z(\gamma_1)\\
0 & \lambda(\gamma_1)^{-1}
\end{pmatrix},\begin{pmatrix}
\lambda(\gamma_2) & \lambda(\gamma_2)^{-1}z(\gamma_2)\\
0 & \lambda(\gamma_2)^{-1}
\end{pmatrix} \right]=\begin{pmatrix}
1 & z(\gamma_1\gamma_2)-z(\gamma_2\gamma_1)\\
0& 1
\end{pmatrix}.
\end{equation}
This statement is an analogue of Lemma~\ref{lem:commutator-hyperbolic}. It means that we can apply the arguments of Section~\ref{sec:positive-genus} again to conclude that $\rho$ is an abelian representation. Now, if $\rho$ is abelian and upper triangular, then it is conjugate to the representation $\rho'\colon\pi_1\surface\to\mathrm{PSL}_2\C$ given by
\[
\rho'(\gamma)=\pm\begin{pmatrix}
\lambda(\gamma) & 0\\
0 & \lambda(\gamma)^{-1}
\end{pmatrix}.
\]
Since $\rho$ is totally elliptic, it holds that $|\lambda(\gamma)|=1$ for every $\gamma\in\pi_1\surface$ and thus $\rho'$ is unitary.
\end{proof}

Propositions~\ref{prop-irreducible} and~\ref{prop-reducible} together prove Theorem~\ref{thm-intro-complex}. We now proceed to the proof of Theorem~\ref{thm-intro-complex-reducible}.

The reason why Theorem~\ref{thm-intro-complex} does not apply to punctured spheres is because there exist reduced totally elliptic representations $\pi_1\surface\to\mathrm{PSL}_2\C$ that are reducible but not unitary when $\surface$ is a sphere with $n\geq 3$ punctures. Before describing such examples, we start with the following result about simple closed curves on $\surface$.

\begin{lem}\label{lem:scc-spheres}
Let $\surface$ be a sphere with $n\geq 1$ punctures and $c_1,\ldots,c_n$ be a system of geometric generators of $\pi_1\surface$ (like those depicted on Figure~\ref{fig:geometric-generators}). If $\gamma\in\pi_1\surface$ represents a simple closed curve on $\surface$, then there exist $k\geq 1$ and $1\leq i_1<\cdots<i_k\leq n$  such that either $\gamma$ or $\gamma^{-1}$ is equal to $c_{i_1}\cdots c_{i_k}$ in the abelianization of $\pi_1\surface$.
\end{lem}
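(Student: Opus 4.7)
The plan is to exploit the fact that every simple closed curve on a punctured sphere is separating. Since $\surface$ has genus zero, any non-trivial simple closed loop $\gamma$ separates $\surface$ into two connected components. Each component is a punctured disk, and together they partition the $n$ punctures into two disjoint subsets: one component contains the punctures indexed by some set $S\subseteq\{1,\ldots,n\}$, while the other contains those indexed by the complement.

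Next, I would observe that $\gamma$ is freely homotopic, inside the closure of the component containing $S$, to the boundary of a small neighbourhood of $S$. After choosing a basepoint and a coherent system of connecting arcs to that neighbourhood, this boundary loop can be expressed in $\pi_1\surface$ as a product of conjugates of the peripheral generators $c_i$ for $i\in S$. Crucially, all these conjugates appear with the same exponent $\epsilon\in\{+1,-1\}$, determined by the orientation of $\gamma$ relative to the fixed orientation of $\surface$.

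Passing to the abelianization of $\pi_1\surface$, every conjugate of $c_i$ becomes simply the image of $c_i$. Hence either $\gamma$ (when $\epsilon=+1$) or $\gamma^{-1}$ (when $\epsilon=-1$) is equal to $\sum_{i\in S}c_i$ in the abelianization. Writing $S=\{i_1<\cdots<i_k\}$, this coincides with $c_{i_1}\cdots c_{i_k}$. The non-triviality of the free homotopy class of $\gamma$ forces $k\geq 1$: the case $S=\emptyset$ corresponds to a null-homotopic curve, and the case $S=\{1,\ldots,n\}$ reduces, via the abelianized relation $c_1+\cdots+c_n=0$, to the complementary (hence empty) subset, so we may always choose $S$ to be a proper non-empty subset.

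The main obstacle is the uniformity of sign in the second step: showing that each peripheral generator enters with the same exponent $\epsilon$. This is geometrically natural, since the punctured disk bounded by $\gamma$ inherits an orientation from $\surface$ which simultaneously orients all of its peripheral boundary components in a consistent way, but it requires a careful coherent choice of connecting arcs from the basepoint to each puncture, compatible with the layout of Figure~\ref{fig:geometric-generators} and the concatenation convention of Remark~\ref{rem:concatenation-convention}.
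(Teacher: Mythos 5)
Your proof is correct, and it takes a genuinely different route from the paper. The paper's argument is algebraic: it invokes the group $\mathrm{Aut}^\ast(\pi_1\surface)$ of automorphisms sending each $c_i$ to a conjugate of itself, identifies its image in $\mathrm{Out}(\pi_1\surface)$ with the pure mapping class group, and then uses (as a black box) the classification of simple closed curves on punctured spheres up to the pure mapping class group — namely, that every such curve lies in the orbit of exactly one product $c_{i_1}\cdots c_{i_k}$ or its inverse. Since the relevant automorphisms act trivially on homology, the conclusion follows. Your argument instead works directly in $H_1(\surface;\Z)$: a simple closed curve on a genus-zero surface separates the punctures into a set $S$ and its complement, and bounding a compact subsurface (a punctured disk with the small peripheral disks removed) shows it is homologous to $\pm\sum_{i\in S}c_i$. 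This is more elementary and self-contained — it does not rely on the mapping-class-group classification of simple closed curves — at the modest cost of a sentence justifying that the boundary orientation is coherent. One small simplification you could make: once you decide to work in homology, the worry about ``careful coherent choice of connecting arcs from the basepoint'' evaporates, since conjugation has no effect in $H_1$; the sign uniformity is just the statement that all boundary components of an oriented compact subsurface are coherently oriented, which needs no auxiliary arcs. Your remark about $S=\{1,\ldots,n\}$ being equivalent to $S=\emptyset$ via the relation $c_1+\cdots+c_n=0$ is correct, though geometrically $S=\{1,\ldots,n\}$ already cannot occur for a non-trivial curve, as the complementary disk would then be unpunctured.
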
 
\begin{proof}
The group $\mathrm{Aut}^\ast(\pi_1\surface)$ consists of all automorphisms of $\pi_1\surface$ that map each generator $c_i$ to a conjugate of itself. In particular, two fundamental group elements that are images of each other by an automorphism in $\mathrm{Aut}^\ast(\pi_1\surface)$ coincide in the abelianization of $\pi_1\surface$. The image of $\mathrm{Aut}^\ast(\pi_1\surface)$ inside the group of outer automorphisms of $\pi_1\surface$ is isomorphic to the pure mapping class group of $\surface$, which means that $\mathrm{Aut}^\ast(\pi_1\surface)$ preserves the subset of $\pi_1\surface$ made of all fundamental group elements representing simple closed curves on $\surface$. Moreover, if $\gamma\in\pi_1\surface$ is a fundamental group element representing a simple closed curve, then the union of the $\mathrm{Aut}^\ast(\pi_1\surface)$-orbits of $\gamma$ and $\gamma^{-1}$ contains exactly one product of the form $c_{i_1}\cdots c_{i_k}$ for $1\leq i_1<\cdots<i_k\leq n$ and $1\leq k\leq n-1$. This concludes the proof.
\end{proof}

\begin{ex}\label{example-reducible}
Using the same notation as in Lemma~\ref{lem:scc-spheres}, we consider a representation $\rho\colon\pi_1\surface\to\mathrm{PSL}_2\C$ of a sphere $\surface$ with $n\geq 3$ punctures whose image is upper triangular. As above, we denote the image of $\gamma\in\pi_1\surface$ by
\[
\rho(\gamma)=\pm\begin{pmatrix}
\lambda(\gamma) & \lambda(\gamma)^{-1}z(\gamma)\\
0 & \lambda(\gamma)^{-1}
\end{pmatrix}.
\]
We claim that if the linear part $\lambda\colon\pi_1\surface\to\C^\times$ is chosen such that 
\begin{equation}\label{linear-part}
|\lambda(c_{i_1}\cdots c_{i_k})|=1 \quad\text{and}\quad \lambda(c_{i_1}\cdots c_{i_k})\neq \pm 1
\end{equation}
for every $1\leq i_1<\cdots<i_k\leq c_n$ with $1\leq k\leq n-1$, then $\rho$ is a reduced totally elliptic representation. We shall now prove this claim. First, observe that $\rho$ is reduced because $\lambda(c_i)\neq \pm 1$ for every $i=1,\ldots,n$. Now, if $\gamma\in\pi_1\surface$ represents a simple closed curve on $\surface$, then $\lambda(\gamma)=\lambda(c_{i_1}\cdots c_{i_k})^{\pm 1}$ for some $1\leq i_1<\cdots<i_k\leq n$ with $1\leq k\leq n-1$ by Lemma~\ref{lem:scc-spheres}. Since by assumption $|\lambda(c_{i_1}\cdots c_{i_k})|=1$ and $\lambda(c_{i_1}\cdots c_{i_k})\neq \pm 1$, we conclude that $\rho$ is totally elliptic. 

Let us now prove that for every $\lambda$ satisfying~\eqref{linear-part}, we can always find an assorted cocycle $z\colon\pi_1\surface\to\C$ such that $\rho$ is not unitary. Since $\rho$ is upper triangular, the image of $\rho$ has a global fixed point for the action of $\PSL_2\C$ on $\mathbb{CP}^1$, seen here as the boundary of the 3-dimensional hyperbolic space. This means that $\rho$ is unitary if and only if its image fixes every point on a geodesic line in the 3-dimensional hyperbolic space, which would imply that $\rho$ is abelian. So, if we choose the cocycle $z$ such that there exist $i\neq j$ with $z(c_ic_j)\neq z(c_jc_i)$, then $\rho$ is not abelian because of~\eqref{commutator}, hence not unitary. Such cocycles $z$ always exist as soon as $n\geq 3$.

In conclusion, by picking $\lambda$ and $z$ as we just described, we obtain a reduced totally elliptic representation $\rho\colon\pi_1\surface\to\mathrm{PSL}_2\C$ that is reducible but not unitary. This finishes the proof of Theorem~\ref{thm-intro-complex-reducible}.
\end{ex}

\bibliographystyle{amsalpha}
\bibliography{references}

\providecommand{\bysame}{\leavevmode\hbox to3em{\hrulefill}\thinspace}
\providecommand{\MR}{\relax\ifhmode\unskip\space\fi MR }
\providecommand{\MRhref}[2]{%
  \href{http://www.ams.org/mathscinet-getitem?mr=#1}{#2}
}
\providecommand{\href}[2]{#2}
\begin{thebibliography}{GKW15}

\bibitem[Bas80]{bass}
Hyman Bass, \emph{Groups of integral representation type}, Pac. J. Math.
  \textbf{86} (1980), 15--51 (English).

\bibitem[Bea83]{beardon}
Alan~F. Beardon, \emph{The geometry of discrete groups}, Grad. Texts Math.,
  vol.~91, Springer, Cham, 1983 (English).

\bibitem[BG99]{benedetto-goldman}
Robert~L. Benedetto and William~M. Goldman, \emph{The topology of the relative
  character varieties of a quadruply-punctured sphere}, Exp. Math. \textbf{8}
  (1999), no.~1, 85--103 (English).

\bibitem[BIW10]{BIW}
Marc Burger, Alessandra Iozzi, and Anna Wienhard, \emph{Surface group
  representations with maximal {Toledo} invariant}, Ann. Math. (2) \textbf{172}
  (2010), no.~1, 517--566 (English).

\bibitem[BM24]{arnaud-sam}
Samuel Bronstein and Arnaud Maret, \emph{{T}ykhyy's {C}onjecture on finite
  mapping class group orbits}, arXiv preprint
  \href{https://arxiv.org/pdf/2409.04379v2}{arXiv:2409.04379v2} (2024).

\bibitem[Bow98]{bowditch}
B.~H. Bowditch, \emph{Markoff triples and quasifuchsian groups}, Proc. Lond.
  Math. Soc. (3) \textbf{77} (1998), no.~3, 697--736 (English).

\bibitem[CL09]{cantat-loray}
Serge Cantat and Frank Loray, \emph{Dynamics on character varieties and
  {Malgrange} irreducibility of {Painlev{\'e}} {VI} equation}, Ann. Inst.
  Fourier \textbf{59} (2009), no.~7, 2927--2978 (English).

\bibitem[DT16]{DT-domination}
Bertrand Deroin and Nicolas Tholozan, \emph{Dominating surface group
  representations by {Fuchsian} ones}, Int. Math. Res. Not. \textbf{2016}
  (2016), no.~13, 4145--4166 (English).

\bibitem[DT19]{deroin-tholozan}
\bysame, \emph{Supra-maximal representations from fundamental groups of
  punctured spheres to {$\mathrm{PSL}(2,\mathbb{R})$}.}, Ann. Sci. {\'E}c.
  Norm. Sup{\'e}r. (4) \textbf{52} (2019), no.~5, 1305--1329 (English).

\bibitem[Far21]{faraco}
Gianluca Faraco, \emph{Geometrisation of purely hyperbolic representations in
  {{\(\mathrm{PSL}_2\mathbb{R}\)}}}, Adv. Geom. \textbf{21} (2021), no.~1,
  99--108 (English).

\bibitem[FM23]{aaron-arnaud}
Aaron Fenyes and Arnaud Maret, \emph{The geometry of {D}eroin--{T}holozan
  representations}, arXiv preprint
  \href{https://arxiv.org/pdf/2312.09199}{arXiv:2312.09199v1} (2023).

\bibitem[FZ23]{feng-zhang}
Yu~Feng and Junming Zhang, \emph{{C}ompact {R}elative
  $\mathrm{SO}_0(2,q)$-{C}haracter {V}arieties of {P}unctured {S}pheres}, arXiv
  preprint \href{https://arxiv.org/pdf/2309.15553}{arXiv:2309.15553v1} (2023).

\bibitem[GK17]{GK}
Fran{\c{c}}ois Gu{\'e}ritaud and Fanny Kassel, \emph{Maximally stretched
  laminations on geometrically finite hyperbolic manifolds}, Geom. Topol.
  \textbf{21} (2017), no.~2, 693--840 (English).

\bibitem[GKW15]{GKW}
Fran{\c{c}}ois Gu{\'e}ritaud, Fanny Kassel, and Maxime Wolff, \emph{Compact
  anti-de {Sitter} 3-manifolds and folded hyperbolic structures on surfaces},
  Pac. J. Math. \textbf{275} (2015), no.~2, 325--359 (English).

\bibitem[Gol88]{goldman-components}
William~M. Goldman, \emph{Topological components of spaces of representations},
  Invent. Math. \textbf{93} (1988), no.~3, 557--607 (English).

\bibitem[Gol06]{goldman-conjecture}
\bysame, \emph{Mapping class group dynamics on surface group representations},
  Problems on mapping class groups and related topics, Providence, RI: American
  Mathematical Society (AMS), 2006, pp.~189--214 (English).

\bibitem[GX11]{goldman-xia}
William~M. Goldman and Eugene~Z. Xia, \emph{Ergodicity of mapping class group
  actions on {SU}(2)-character varieties}, Geometry, rigidity, and group
  actions. Selected papers based on the presentations at the conference in
  honor of the 60th birthday of Robert J. Zimmer, Chicago, IL, USA, September
  2007, Chicago, IL: University of Chicago Press, 2011, pp.~591--608 (English).

\bibitem[JS87]{complex-functions}
Gareth~A. Jones and David Singerman, \emph{Complex functions. {An} algebraic
  and geometric viewpoint}, Cambridge etc.: {Cambridge} {University} {Press}.
  {XIV}, 342 p. (1987)., 1987.

\bibitem[LLL23]{LLL}
Yeuk Hay~Joshua Lam, Aaron Landesman, and Daniel Litt, \emph{Finite braid group
  orbits on {$\rm SL_2$}-character varieties}, arXiv preprint
  \href{https://arxiv.org/pdf/2308.01376v1}{arXiv:2308.01376v1} (2023).

\bibitem[LT14]{LT}
Oleg Lisovyy and Yuriy Tykhyy, \emph{Algebraic solutions of the sixth
  {Painlev{\'e}} equation}, J. Geom. Phys. \textbf{85} (2014), 124--163
  (English).

\bibitem[Mar22]{maret-ergodicity}
Arnaud Maret, \emph{Ergodicity of the mapping class group action on
  {D}eroin-{T}holozan representations}, Groups Geom. Dyn. \textbf{16} (2022),
  no.~4, 1341--1368. \MR{4536432}

\bibitem[Mar24]{action-angle}
\bysame, \emph{Action-angle coordinates for surface group representations in
  genus zero}, J. Symplectic Geom. \textbf{22} (2024), no.~5, 937--999
  (English).

\bibitem[Mat12]{mathews}
Daniel~V. Mathews, \emph{Hyperbolic cone-manifold structures with prescribed
  holonomy. {II}: {Higher} genus}, Geom. Dedicata \textbf{160} (2012), 15--45
  (English).

\bibitem[Mon16]{mondello}
Gabriele Mondello, \emph{Topology of representation spaces of surface groups in
  {${\rm PSL}_2(\mathbb R)$} with assigned boundary monodromy and nonzero
  {E}uler number}, Pure Appl. Math. Q. \textbf{12} (2016), no.~3, 399--462.
  \MR{3767231}

\bibitem[MR03]{reid}
Colin Maclachlan and Alan~W. Reid, \emph{The arithmetic of hyperbolic
  3-manifolds}, Grad. Texts Math., vol. 219, New York, NY: Springer, 2003
  (English).

\bibitem[MW16]{marche-wolff}
Julien March{\'e} and Maxime Wolff, \emph{The modular action on
  {$\mathrm{PSL}_2(\mathbb{R})$}-characters in genus 2}, Duke Math. J.
  \textbf{165} (2016), no.~2, 371--412 (English).

\bibitem[MW19]{marche-wolff-2}
\bysame, \emph{Six-point configurations in the hyperbolic plane and ergodicity
  of the mapping class group}, Groups Geom. Dyn. \textbf{13} (2019), no.~2,
  731--766 (English).

\bibitem[MZ24]{arielle}
Arielle Marc-Zwecker, \emph{Relative {{\(\mathrm{SU}(2, 1)\)}}-character
  varieties and decomposable complex hyperbolic triangle groups}, Geom.
  Dedicata \textbf{218} (2024), no.~5, 23 (English), Id/No 103.

\bibitem[Pal14]{palesi}
Fr{\'e}d{\'e}ric Palesi, \emph{Dynamics of the modular group action and
  {Markov} triples}, Actes de S\'eminaire de Th\'eorie Spectrale et
  G\'eom\'etrie. Ann\'ee 2012--2014, St. Martin d'H{\`e}res: Universit{\'e} de
  Grenoble I, Institut Fourier, 2014, pp.~137--161 (French).

\bibitem[Par03]{parreau}
Anne Parreau, \emph{Elliptic subgroups of linear groups over a valued field},
  J. Lie Theory \textbf{13} (2003), no.~1, 271--278 (French).

\bibitem[Pra94]{prasad}
Gobal Prasad, \emph{{{\(\mathbb{R}\)}}-regular elements in {Zariski}-dense
  subgroups}, Q. J. Math., Oxf. II. Ser. \textbf{45} (1994), no.~180, 541--545
  (English).

\bibitem[Pro76]{procesi}
C.~Procesi, \emph{The invariant theory of {$(n\times n)$} matrices}, Adv. Math.
  \textbf{19} (1976), 306--381 (English).

\bibitem[TT21]{tholozan-toulisse}
Nicolas Tholozan and J{\'e}r{\'e}my Toulisse, \emph{Compact connected
  components in relative character varieties of punctured spheres},
  {\'E}pijournal de G{\'e}om. Alg{\'e}br., EPIGA \textbf{5} (2021), 37
  (English), Id/No 6.

\bibitem[Wie18]{wienhard}
Anna Wienhard, \emph{An invitation to higher {Teichm{\"u}ller} theory},
  Proceedings of the international congress of mathematicians 2018, ICM 2018,
  Rio de Janeiro, Brazil, August 1--9, 2018. Volume II. Invited lectures,
  Hackensack, NJ: World Scientific; Rio de Janeiro: Sociedade Brasileira de
  Matem{\'a}tica (SBM), 2018, pp.~1013--1039 (English).

\end{thebibliography}

\end{document}